\newtheorem{theorem}{Theorem}[section]
\newtheorem{lemma}[theorem]{Lemma}
\newtheorem{proposition}[theorem]{Proposition}
\theoremstyle{definition}
\newtheorem{remark}[theorem]{Remark}
\newtheorem*{xtheorem}{Theorem}
\newcommand{\C}{\mathbb{C}}
\newcommand{\N}{\mathbb{N}}
\newcommand{\T}{\mathbb{T}}
\newcommand{\D}{\mathbb{D}}
\newcommand{\lphi}{\log\varphi}
\newcommand{\Exp}{\textnormal{Exp}}
\newcommand{\Ree}{\textnormal{Re}}
\newcommand{\Bo}{{\cal B}}
\newcommand{\TBo}{{\tilde{\cal B}}}
\newcommand{\HC}{{\cal HC}}
\newcommand{\FHC}{{\cal FHC}}
\newcommand{\tphi}{{\tilde{\varphi}}}
\newcommand{\tK}{{\tilde{K}}}
\newcommand{\Imm}{\textnormal{Im}}
\newcommand{\id}{\textnormal{id}}
\newcommand{\ind}{\textnormal{ind}}
\newcommand{\conv}{\textnormal{conv}}
\newcommand{\len}{\textnormal{len}}
\newcommand{\ldens}{\underline{\textnormal{dens}}}
\newcommand{\linspan}{\textnormal{span}}
\newcommand{\dist}{\mathrm{dist}}
\numberwithin{equation}{section}
\begin{document}


\baselineskip=17pt


\title{Growth of (frequently) hypercyclic functions for differential operators}

\author{Hans-Peter Beise\\
University of Trier\\
FB-IV, Mathematics\\
D-54286 Trier\\
Germany\\
E-mail: beise@uni-trier.de
\and 
J\"{u}rgen M\"{u}ller\\
University of Trier\\
FB-IV, Mathematics\\
D-54286 Trier\\
Germany\\
E-mail: jmueller@uni-trier.de
}
\date{}

\maketitle


\renewcommand{\thefootnote}{}

\footnote{2010 \emph{Mathematics Subject Classification}: 30K99, 30D15}

\footnote{\emph{Key words and phrases}: frequently hypercyclic operators, growth conditions, functions of exponential type, integral transforms}

\renewcommand{\thefootnote}{\arabic{footnote}}
\setcounter{footnote}{0}


\begin{abstract}
We investigate the conjugate indicator diagram or, equivalently, the indicator function of (frequently) hypercyclic functions of exponential type for differential operators. This gives insights into growth conditions of these functions on particular rays or sectors. Our research extends known results in several respects.
\end{abstract}

\section{Introduction}
A continuous operator $T:X\rightarrow X$, with $X$ a topological vector space, is called \textbf{hypercyclic} if there exists a vector $x\in X$ sucht that the orbit $\{T^nx:n\in\N\}$ is dense in $X$. Such a vector $x$ is said to be a \textbf{hypercyclic vector}. By $\HC(T,X)$, we denote the set of all hypercyclic vectors for $T$ (on $X$). The operator is called \textbf{frequently hypercyclic} if there exists some $x\in X$ such that for every non-empty open set $U\subset X$ the set $\{n:T^nx\in U\}$ has positive lower density. The vector $x$ is called a \textbf{frequently hypercyclic vector} in this case and the set of all these vectors shall be denoted by $\FHC(T,X)$ in the following. We recall that the \textbf{lower density} of a discrete set $\Lambda\subset \C$ is defined by
\[
 \liminf\limits_{r\rightarrow \infty}\frac{\#\{\lambda\in\Lambda:|\lambda|\leq r\}}{r}=:\ldens(\Lambda).
\]
We are only concerned with spaces consisting of holomorphic functions and therefore the (frequently) hypercyclic vectors are called (frequently) hypercyclic functions in this work.

In \cite{godefr}, G. Godefroy and J. H. Shapiro show that for every non-constant entire function $\varphi(z)=\sum_{n=0}^\infty c_n z^n$ of exponential type, the induced differential operator
\begin{align}\label{defdifferential}
\varphi(D):H(\C)\rightarrow H(\C), \ f\mapsto \sum_{n=0}^\infty c_n f^{(n)}\; ,
\end{align} 
where $H(\C)$ is endowed with the usual topology of locally uniform convergence, is hypercyclic. This results also applies for the case of frequent hypercyclicity as it is shown in \cite{Thgode}. Actually, in both articles \cite{godefr} and \cite{Thgode}, the outlined results are given for the case of $H(\C^N)$.
The possible rate of growth of the corresponding (frequently) hypercyclic functions is widely investigated (cf. \cite{exphyp}, \cite{frequenterd}, \cite{Thgode}, \cite{hilbertentire}, \cite{duyos}, \cite{erdmannmacl}). It turns out that the level set 
\begin {align}\label{levelset}
C_\varphi:=\{z: |\varphi(z)|=1\}
\end{align}
plays a cruical role in this context. More precisely, under certain additional assumptions, it is known that for $\tau_\varphi:=\dist(0,C_\varphi)$ there are functions of exponential type $\tau_\varphi$ that belong to $\HC(\varphi(D), H(\C))$, while every function of exponential type less than $\tau_\varphi$ cannot belong to $\HC(\varphi(D), H(\C))$ (cf.  \cite{exphyp}). It is also known that for every $\varepsilon>0$ there are functions in $\FHC(\varphi(D), H(\C))$ that are of exponential type less or equal than $\tau_\varphi+\varepsilon$ (cf. \cite{Thgode}). 
For the case of the translation operator $f\mapsto f(\cdot+1)$, which is the differential operator induced by the exponential function, and the ordinary differentiation operator $D$, more precise growth conditions are achieved in  \cite{duyos}, \cite{hilbertentire}, \cite{erdmannmacl} and  \cite{frequenterd}.\\
However, all investigations in this direction have in common that the rate of growth is measured with respect to the \textbf{maximum modulus} $M_f(r):=\max_{|z|=r} |f(z)|$ or $L^p$-averages $M_{f,p}(r):=(1/2\pi\, \int_0^{2\pi}|f(re^{it})|^p dt)^{1/p}$, where $p\in[1,\infty)$. We extend some of these results by considering growth conditions with respect to rays emanating from the origin.\\
For the sake of completeness, we recall that an entire function $f$ is said to be \textbf{of exponential type $\tau$} if 
\[
 \limsup\limits_{r\rightarrow\infty}\frac{\log M_f(r)}{r}=:\tau(f)=\tau,
\]
where we set $\log(0):=-\infty$, and $f$ is said to be a function of exponential type when the above $\limsup$ is not equal to $+\infty$. The \textbf{indicator function} of an entire function of exponential type is defined by
\[
 h_f(\Theta):=\limsup\limits_{r\rightarrow\infty} \frac{\log |f(re^{i\Theta})|}{r},\ \ \Theta\in[-\pi,\pi].
\]
It is known that $h_f$ is determined by the support function of a certain compact and convex set $K(f)\subset\C$, to be more specific, for $z=re^{i\Theta}$ we have
\[
 r\, h_f(\Theta)=H_{K(f)}(z):=\sup\limits\{\Ree(zu):u\in K(f)\}
\]
(cf. \cite{berenstein}). The set $K(f)$ is called the \textbf{conjugate indicator diagram} of $f$. Note that for $f\equiv 0$, we have $K(f)=\emptyset$. In the following we give necessary and sufficient conditions for the location and the size of the conjugate indicator diagram of (frequently) hypercyclic functions for differential operators $\varphi(D)$. According to the above relations, this yields information about the growth on particular rays or sectors in terms of the indicator function. Since 
\[
\max_{\Theta\in[-\pi,\pi]}h_f(\Theta)=\max_{u\in K(f)}|u|=\tau(f),
\]
this also includes information about the possible exponential type. In particular, $f$ is of exponential type zero if and only of $K(f)=\{0\}$.

In the following, we abbreviate the exponential function $z\mapsto e^{\alpha z}$ by $e_\alpha$, for $\alpha$ some complex number. For $\alpha=\tau e^{i\psi}$ the indicator function of $e_\alpha$ is given by 
\[
h_{e_{\alpha}}(\Theta)= \tau \cos(\Theta + \psi)
\] 
and the conjugate indicator diagram is the singleton $\{\alpha\}$. 

With \cite[Theorem 5.4.12]{boas} it follows that for an entire function $f$ of exponential type we have $K(f)=\{\alpha\}$ if and only if there is some entire function $f_0$ of exponential type zero with $f=f_0 e_\alpha$. In that sense, functions which have singleton conjugate indicator diagram are close to the corresponding exponential function. In particular, the indicator functions of $f$ and $e_\alpha$ coincide, which implies that $f$ decreases exponentially in the half plane $|\arg(z) + \psi|> \pi/2$ if $\alpha\not=0$.  

Our first result shows that the conjugate indicator diagram of hypercyclic functions for differential operators are not restricted with respect to their size and shape.
\begin{theorem}\label{satzeins}
Let $\varphi$ be a non-constant entire function of exponential type. Then for every compact and convex set $K\subset \C$ that intersects $C_\varphi$ there exists an $f\in \HC(\varphi(D),H(\C))$ that is of exponential type with $K(f)=K$.
\end{theorem}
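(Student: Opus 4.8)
The plan is to construct $f$ directly as a series of exponential terms whose frequencies lie in $K$, using the eigenstructure of $\varphi(D)$ for the hypercyclicity and the location of the frequencies for the control of the conjugate indicator diagram. Recall that $\varphi(D)e_\alpha=\varphi(\alpha)e_\alpha$, so each $e_\alpha$ is an eigenvector with eigenvalue $\varphi(\alpha)$, and that $K(e_\alpha)=\{\alpha\}$. Since the conjugate indicator diagram of a finite exponential sum $\sum_j c_je_{\alpha_j}$ (with distinct $\alpha_j$ and nonzero $c_j$) is exactly $\overline{\conv\{\alpha_j\}}$, and since $\varphi$ is itself of exponential type, the class $\Exp(K)$ of entire functions of exponential type with conjugate indicator diagram $\subseteq K$ is invariant under $\varphi(D)$; this is the ambient class in which $f$ will be built. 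The target is thus an $f\in\Exp(K)$ that is hypercyclic for $\varphi(D)$ on $H(\C)$ and whose indicator satisfies $h_f(\Theta)=H_K(e^{i\Theta})$ for every $\Theta$, the latter forcing $K(f)=K$ rather than a proper subset.

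For the hypercyclicity I would run the Godefroy--Shapiro block construction relative to $K$. The hypothesis $K\cap C_\varphi\neq\emptyset$ enters here: fixing $\beta\in K\cap C_\varphi$, where $|\varphi(\beta)|=1$, one finds in every neighbourhood of $\beta$ frequencies $\alpha\in K$ with $|\varphi(\alpha)|>1$ (when $\beta$ can be approached from the region $\{|\varphi|>1\}$ inside $K$). Because the linear span of $\{e_\alpha:\alpha\in U\}$ is dense in $H(\C)$ for any nonempty open $U$, I can approximate a fixed dense sequence $(P_j)$ in $H(\C)$ by finite exponential sums with such frequencies. Choosing the coefficients of the $j$-th block of size $\sim\varphi(\alpha)^{-m_j}$ and the iteration times $m_j\to\infty$ sufficiently fast makes $\varphi(D)^{m_j}f$ approximate $P_j$ while keeping all tails small; this is the standard argument and yields $f\in\HC(\varphi(D),H(\C))$.

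To force $K(f)=K$ I would enlarge the same series by a ``skeleton'' of blocks designed to saturate the support function. For a countable dense set of directions $\Theta$ choose a support point $a(\Theta)\in K$ with $\Ree(e^{i\Theta}a(\Theta))=H_K(e^{i\Theta})$ and include a term $c\,e_{a(\Theta)}$ with a coefficient small enough to be absorbed by the convergence estimates but large enough not to be cancelled in the relevant $\limsup$; this gives $h_f(\Theta)\ge H_K(e^{i\Theta})$ on a dense, hence (by continuity of the indicator of a function of exponential type) on every direction. Combined with the uniform control that every block has conjugate indicator diagram contained in $K$, which propagates to the sum and gives $K(f)\subseteq K$, i.e. $h_f(\Theta)\le H_K(e^{i\Theta})$, one obtains $h_f=H_K$ and therefore $K(f)=K$. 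Equivalently, in the language of integral transforms, this step amounts to producing a Borel transform $\mathcal{B}f$, holomorphic on $\C_\infty\setminus K$, whose singularities reach every supporting line of $K$.

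The main obstacle I expect is the simultaneous bookkeeping together with the degenerate positions of $K$. The hypercyclicity blocks must not shrink the diagram and the skeleton blocks must not destroy the density of the orbit, so the coefficients and times have to be interlocked carefully. More seriously, the coefficient trick above needs frequencies in $K$ with $|\varphi|>1$, which are unavailable when $K$ has empty interior and meets $C_\varphi$ only there (for instance $K=\{\beta\}\subset C_\varphi$, the extremal case realizing exponential type $\dist(0,C_\varphi)$). In that regime $f$ cannot be assembled from genuine eigenvectors inside $K$ and must instead be taken of the form $f_0e_\beta$ with $f_0$ of exponential type zero; I would treat it either by a separate construction exploiting the freedom in $f_0$, or by approximating $K$ from outside by fatter convex sets meeting $C_\varphi$ in their interior and transferring the resulting hypercyclic vectors through the transform while preventing the diagram from collapsing. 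This degenerate case is where the real difficulty lies.
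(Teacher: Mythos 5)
Your frame is the right one --- frequencies confined to $K$ force $K(f)\subset K$, the point $\beta\in K\cap C_\varphi$ supplies eigenvalues of modulus near $1$, and you correctly reformulate $K(f)=K$ as making the singularities of $\Bo f$ reach every supporting line of $K$ --- but both pillars of your construction have genuine gaps, and they are precisely the difficulties the paper's proof is built to avoid. First, the block construction for hypercyclicity does not close as sketched. With blocks built only from $e_\alpha$, $\alpha\in K$, $|\varphi(\alpha)|>1$, and coefficients of size $\varphi(\alpha)^{-m_j}$, the later iterates applied to earlier blocks, $\varphi(D)^{m_i}u_j=\sum_k d_k\varphi(\alpha_k)^{m_i-m_j}e_{\alpha_k}$ with $i>j$, blow up; the standard remedy (as in the Godefroy--Shapiro criterion) also needs a dense family of forward-contracted vectors, i.e.\ frequencies in $K$ with $|\varphi(\alpha)|<1$ --- exactly what is unavailable in the degenerate geometries you flag (one-sided contact, empty interior, $K=\{\beta\}$). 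Moreover, since all admissible $|\varphi(\alpha)|$ cluster at $1$, any adaptive correction scheme pits slow decay $\varphi(\alpha)^{-m_j}$ against the exponential growth of the terms it must cancel; the ``interlocked bookkeeping'' you defer \emph{is} the whole problem, and you leave it, together with the singleton case, unresolved --- yet the singleton case $K=\{\beta\}\subset C_\varphi$ (yielding $f=f_0e_\beta$ with $f_0$ of type zero) is not an edge case but the one realizing minimal growth. The paper dispenses with explicit construction entirely: it proves $\varphi(D)$ is transitive on the Fr\'{e}chet space $\Exp(K)$ (Theorem \ref{hyperexp}), applying the Godefroy--Shapiro criterion when $K$ has interior, and handling empty interior by a transfer argument --- fatten $K$ to a convex $L$ with interior, where both eigenvalue families exist, and pull transitivity back to $\Exp(K)$ using the norm comparison $\|\cdot\|_{L,n+1}\leq\|\cdot\|_{K,n}$ and the denseness of $\Exp(K)$ in $\Exp(L)$. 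Birkhoff's theorem then makes $\HC(\varphi(D),\Exp(K))$ a dense $G_\delta$, and the dense continuous embedding $\Exp(K)\hookrightarrow H(\C)$ transfers hypercyclicity to $H(\C)$.

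Second, your ``skeleton'' device for forcing $K(f)=K$ has no mechanism behind it: the indicator of an infinite sum is not bounded below by the indicators of individual summands, and a single term $c\,e_{a(\Theta)}$ can perfectly well be cancelled in the $\limsup$ along the ray by the infinitely many hypercyclicity blocks added afterwards, whose frequencies you cannot place at your support points (they must cluster near $C_\varphi\cap K$). The continuity argument extending the inequality from a dense set of directions is fine, but the dense-set inequality itself is never established. The paper sidesteps this by a second residuality argument: by Nestoridis's theorem, the nowhere-continuable functions form a dense $G_\delta$ in $H(\C_\infty\setminus K)$, and since $\Bo:\Exp(K)\rightarrow H(\C_\infty\setminus K)$ is an isomorphism, $\{f\in\Exp(K):K(f)=K\}$ is residual in $\Exp(K)$ (Proposition \ref{gdelta}); intersecting two residual sets finishes the proof, with no interaction at all between the hypercyclicity requirement and the saturation requirement. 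This is the key structural idea missing from your proposal: working in $\Exp(K)$ rather than $H(\C)$ makes both properties \emph{generic}, so they never have to be reconciled by hand.
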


Theorem \ref{satzeins} implies that for every $\alpha \in C_\varphi$ there exists some $f_0$ of exponential type zero such that $f=f_0 e_\alpha\in\HC(\varphi(D),H(\C))$. Consequently, in the case that $C_\varphi$ intersects the origin, there is a function $f\in\HC(\varphi(D),H(\C))$ that is of exponential type zero. For the translation operator $e_1(D)$, a much stronger result is due to S. M. Duyos-Ruiz. She proved that functions $f\in \HC(e_1(D), H(\C))$ can have arbitrary slow tranzendental rate of growth, that is, for every $q:[0,\infty)\rightarrow [1,\infty)$ such that $q(r)\rightarrow \infty$ as $r$ tends to infinity, there are functions $f\in \HC(e_1(D),H(\C))$ such that $M_f(r)=O(r^{q(r)})$ (cf.  \cite{duyos}). In \cite{hilbertentire}, this result is extended the Hilbert spaces consisting of entire functions of small growth. 

In Section \ref{s3} we will introduce a transform that quasi-conjugates differential operators and which enables us the extend the result of S. M. Duyos-Ruiz to the whole class of differential operators in the following sense. 

\begin{theorem}\label{satzzwei}
Let $\varphi$ be a non-constant entire function of exponential type and let $\alpha\in C_\varphi$ be so that $\varphi'(\alpha)\neq 0$. Then for every $q:[0,\infty)\rightarrow [1,\infty)$ such that $q(r)\rightarrow \infty$ as $r$ tends to infinity, there is an entire function $f_0$ with $M_{f_0}(r)=O(r^{q(r)})$ and so that $f_0e_\alpha\in \HC(\varphi(D),H(\C))$.
\end{theorem}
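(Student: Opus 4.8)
The plan is to reduce the general operator $\varphi(D)$ to a unimodular multiple of the translation operator by a change of variable on the frequency side, and then to transport a slowly growing hypercyclic function of Duyos--Ruiz type through the resulting integral transform. Replacing $q$ by a smaller function if necessary, I may assume $q(r)\log r=o(r)$, so that all functions involved have exponential type zero.

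First I would conjugate by the multiplier $M\colon g\mapsto g\,e_\alpha$. Since $D(g\,e_\alpha)=((D+\alpha)g)e_\alpha$, one has $\varphi(D)^n(g\,e_\alpha)=(\mu(D)^n g)\,e_\alpha$, where $\mu:=\varphi(\,\cdot+\alpha)$, and $M$ is a homeomorphism of $H(\C)$. Hence $f_0e_\alpha\in\HC(\varphi(D),H(\C))$ if and only if $f_0\in\HC(\mu(D),H(\C))$, while the growth of $f_0$ is unaffected. Writing $\beta:=\varphi(\alpha)=\mu(0)$ we have $|\beta|=1$, $\mu(0)=\beta$, $\mu'(0)=\varphi'(\alpha)\neq 0$ and $0\in C_\mu$. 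So it suffices to construct an $f_0$ of exponential type zero with $M_{f_0}(r)=O(r^{q(r)})$ and $f_0\in\HC(\mu(D),H(\C))$.

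Next I would exploit $\mu'(0)\neq 0$. The function $\rho:=\log(\mu/\beta)$, with the branch vanishing at $0$, is holomorphic near $0$ with $\rho(0)=0$ and $\rho'(0)=\mu'(0)/\beta\neq 0$, hence a biholomorphism of a neighbourhood of $0$ onto a neighbourhood of $0$, and $\mu=\beta\,e^{\rho}$ there. On functions I would realize this change of variable through the Borel transform: for $g$ of exponential type zero (so that its Borel transform $\widehat{g}$ is holomorphic off $0$ with an isolated singularity there) set
\[
\Phi g(z):=\frac{1}{2\pi i}\oint_{\gamma}\widehat{g}(\rho(w))\,e^{zw}\,dw ,
\]
with $\gamma$ a small circle around $0$. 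Since $\mu(D)e^{zw}=\mu(w)e^{zw}=\beta e^{\rho(w)}e^{zw}$, differentiating under the integral and using that the entire correction term in $\widehat{g(\cdot+1)}(\zeta)=e^{\zeta}\widehat{g}(\zeta)-(\text{entire})$ contributes $0$ to the contour integral (being holomorphic inside $\gamma$ after composition with $\rho$), one gets $\mu(D)\Phi g=\Phi(\beta\,g(\,\cdot+1))$. Thus $\Phi$ quasi-conjugates the operator $\beta\,e_1(D)$ to $\mu(D)$. Moreover $\Phi$ maps the eigenfunction $e_\zeta$ of $\beta\,e_1(D)$ to the eigenfunction $e_{\rho^{-1}(\zeta)}$ of $\mu(D)$, and since the linear span of $\{e_\lambda:\lambda\text{ near }C_\mu\}$ is dense in $H(\C)$, the transform $\Phi$ has dense range.

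Finally I would take, by the Duyos--Ruiz construction (or its routine adaptation to the unimodular multiple $\beta\,e_1(D)$, $|\beta|=1$, of the translation operator), a vector $g\in\HC(\beta\,e_1(D),H(\C))$ of exponential type zero with $M_g(r)=O(r^{q(r)})$. As $\Phi$ is continuous with dense range and intertwines the two operators, $f_0:=\Phi g$ is hypercyclic for $\mu(D)$; since $\widehat{f_0}=\widehat{g}\circ\rho$ has its only singularity where $\rho(w)=0$, i.e.\ at $w=0$, the function $f_0$ is again of exponential type zero, whence $f_0e_\alpha\in\HC(\varphi(D),H(\C))$ with conjugate indicator diagram $\{\alpha\}$. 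The main obstacle is the growth estimate: one must show that the nonlinear substitution $\widehat{g}\mapsto\widehat{g}\circ\rho$ transports the mild, polynomially controlled essential singularity of $\widehat{g}$ at $0$ to an equally mild singularity of $\widehat{f_0}$, so that $M_{f_0}(r)=O(r^{q(r)})$; because $\rho$ is near-linear at $0$ the distortion should be absorbable, but controlling it uniformly, and verifying that $\Phi$ is a well-defined continuous dense-range operator on the space of slowly growing functions despite $\rho$ being only locally defined, is the delicate point. The hypothesis $\varphi'(\alpha)\neq 0$ enters precisely here, keeping $\rho$ unbranched so that the singularity type, and hence the growth order, is preserved.
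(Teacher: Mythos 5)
Your architecture is the paper's: reduce to $\alpha=0$ by the multiplier $g\mapsto g e_\alpha$ (the paper's Lemma \ref{shift}), build a Borel-side substitution transform intertwining a (rotated) translation with $\varphi(D)$ (your $\Phi$ is the paper's $\Phi_{\varphi^{-1}\circ e_1}$ up to a Jacobian factor that is holomorphic at $0$ and hence invisible in the contour integral), handle the unimodular factor $\beta$ as the paper does via \cite[Corollary 3.3]{baymathbook}, and push a slowly growing hypercyclic vector through. But the two points you defer are exactly where your version, as stated, fails. First, the transfer of hypercyclicity: you take $g\in\HC(\beta e_1(D),H(\C))$ from Duyos--Ruiz and assert that $\Phi$ is continuous with dense range. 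Continuous in which topology? $\Phi$ is not even defined on all of $H(\C)$ (it needs the Borel transform, with singularity inside the domain of $\rho$), and on functions of exponential type it is \emph{not} continuous for the topology of locally uniform convergence --- the paper states this explicitly in the Remark following Proposition \ref{phiconj}. So density of the $\beta e_1(D)$-orbit of $g$ in $H(\C)$ yields nothing about the $\mu(D)$-orbit of $\Phi g$. The quasi-conjugacy must be run in the Fr\'echet space $\Exp(\{0\})$ with its finer topology (Propositions \ref{densephi} and \ref{phiconj}), and then your input $g$ must be hypercyclic for translation \emph{in that topology} with the prescribed growth. Duyos--Ruiz gives only $H(\C)$-hypercyclicity; this is precisely why the paper invokes the Chan--Shapiro theorem on $E^2(a)$, which produces $g\in E^2(b)\cap \HC(e_1(D),\Exp(\{0\}))$, and afterwards uses the continuous dense embedding $\Exp(K)\hookrightarrow H(\C)$ to conclude $\HC(\varphi(D),\Exp(K))\subset \HC(\varphi(D),H(\C))$.

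Second, the growth estimate you hope is ``absorbable'' genuinely cannot be closed from a bound on $M_g$ alone. The natural estimate (integrate over the circle $|\xi|=1/r$, where $|\varphi^{-1}(e_1(\xi))|\leq c|\xi|$ makes the kernel bounded by $e^c$) gives $M_{f_0}(r)\leq e^c\sum_{n\geq 0}|g^{(n)}(0)|\,r^n$, an $n!$-amplified majorant of $g$: Cauchy estimates from $M_g(s)=O(s^{q(s)})$ control $|g^{(n)}(0)|/n!$, but $\sum_n n!\,M_g(s)\,(r/s)^n$ diverges for every $s$, so coefficient-level control is indispensable. The paper secures it by choosing $g$ in the Hilbert space $E^2(b)$ with $b_n=a_n/n!$, so that $\sum_n |g^{(n)}(0)|^2/a_n^2<\infty$, whence the majorant $G(z)=\sum_n|g^{(n)}(0)|z^n$ lies in $E^2(a)$ and $M_{f_0}(r)\leq e^c G(r)=O(a(r))=O(r^{q(r)})$ after picking an admissible comparison function $a$ with $a(r)=O(r^{q(r)})$. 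In short: what you flag as ``the delicate point'' is not a distortion estimate for $\rho$ near $0$ (your $|\rho^{-1}(\xi)|\leq c|\xi|$ observation is all that is needed from $\varphi'(\alpha)\neq 0$ there); it is that the proof must change both the ambient space (from $H(\C)$ to $\Exp(\{0\})$) and the input theorem (from Duyos--Ruiz to Chan--Shapiro's Hilbert-space strengthening), and without these replacements the hypercyclicity transfer and the growth bound both break.
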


The above results fail to hold in the case of frequent hypercyclicity. Here, some expansion of the conjugate indicator diagram is required.

\begin{theorem}\label{satzdrei}
Let $\varphi$ be a non-constant entire function of exponential type.
\begin{enumerate}
\item[(1)] If $K\subset \C$ is a compact and convex set such that the intersection of $K$ and $C_\varphi$ contains a continuum, then there is a function 
$f\in \FHC(\varphi(D),H(\C))$ that is of exponential type and so that $K(f)\subset K$.

\item[(2)] There is no function $f\in \FHC(\varphi(D),H(\C))$ that is of exponential type and so that $K(f)$ is a singleton.

\end{enumerate}

\end{theorem}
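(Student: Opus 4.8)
The plan is to prove the two parts by rather different mechanisms.

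For part (1), the strategy is to exploit the rich supply of frequently hypercyclic vectors together with the flexibility of the conjugate indicator diagram. Since $K\cap C_\varphi$ contains a continuum $\Gamma$, I can pick a whole arc of points $\alpha\in\Gamma\subset C_\varphi$. The idea is to build the desired $f$ as a suitable superposition (an integral or a lacunary-type series) of functions $f_\alpha e_\alpha$ whose individual conjugate indicator diagrams are the singletons $\{\alpha\}$, chosen so that the resulting conjugate indicator diagram $K(f)$ is the convex hull of the $\alpha$'s used, hence contained in $K$. I expect to realize this via the Borel/Laplace transform picture: an entire function of exponential type with $K(f)\subset K$ corresponds, through the Borel transform, to a function holomorphic off $K$, so representing $f$ by a Cauchy-type integral $f(z)=\frac{1}{2\pi i}\oint_{\partial K} e^{zw}\,g(w)\,dw$ against a density $g$ supported appropriately on $\Gamma$ automatically forces $K(f)\subset K$. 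The frequent hypercyclicity then has to be arranged by choosing the coefficients/density so that the orbit $\{\varphi(D)^n f\}$ returns with positive lower density to every basic open set; here I would invoke the Frequent Hypercyclicity Criterion, constructing an unconditionally summable family indexed by a set of positive lower density that witnesses both the eigenvector expansion (using that $e_\alpha$ are eigenvectors of $\varphi(D)$ with eigenvalue $\varphi(\alpha)$, and $|\varphi(\alpha)|=1$ on $C_\varphi$) and the required approximation.

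For part (2), the approach is an obstruction argument showing a singleton diagram is incompatible with frequent hypercyclicity. If $K(f)=\{\alpha\}$ with $\alpha\in C_\varphi$, then by the Boas result quoted in the excerpt, $f=f_0 e_\alpha$ with $f_0$ of exponential type zero, and $f$ and $e_\alpha$ share the indicator function $h_f(\Theta)=\tau\cos(\Theta+\psi)$. In particular $f$ decays exponentially in the open half-plane $|\arg(z)+\psi|>\pi/2$. The plan is to show this forces the $L^p$-averages or the maximum modulus of $f$ to grow too slowly to allow the orbit to be frequently hypercyclic. Concretely, $\varphi(D)^n f=(f_0)_n\,\varphi(\alpha)^n e_\alpha$ up to lower-order corrections from the derivatives of $\varphi$ at $\alpha$, and since $|\varphi(\alpha)|=1$ the whole orbit stays within the narrow growth envelope dictated by the single indicator $h_f$. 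But frequent hypercyclicity requires the orbit to approximate, with positive lower density, functions whose indicator functions (equivalently, conjugate indicator diagrams) are spread out — in particular functions that grow in directions where $f$ and all its $\varphi(D)$-images decay. I would make this quantitative by comparing averaged growth: a frequently hypercyclic function must satisfy a lower bound on $M_{f,p}(r)$ (of the type established in the cited growth literature, e.g. that frequent hypercyclicity forces $M_{f,2}(r)$ to grow at least like $e^{\tau_\varphi r}/\sqrt{r}$ along a density-one sequence), whereas a singleton diagram confines the averaged growth to the single direction $-\psi$ and is too thin to meet this lower bound in enough directions.

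The main obstacle I anticipate is part (2): turning the intuition ``a singleton diagram is too one-directional for frequent hypercyclicity'' into a rigorous contradiction. The delicate point is that exponential type zero factor $f_0$ can still grow subexponentially in many directions, so the decay is only at the exponential scale; I must argue that the positive-lower-density recurrence of the orbit to a fixed open neighborhood forces genuinely two-dimensional exponential growth (a conjugate indicator diagram of positive diameter), contradicting $K(f)=\{\alpha\}$. I expect the cleanest route is to fix a target function $g$ with $K(g)$ a nondegenerate segment, use that $\{n:\varphi(D)^nf\in U\}$ has positive lower density for a neighborhood $U$ of $g$, and extract from this a subsequence along which $f$ itself must inherit growth in a direction transverse to $\alpha$, contradicting the one-directional indicator. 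For part (1), the main technical care is verifying that the Cauchy-integral construction simultaneously keeps $K(f)$ inside $K$ and satisfies the Frequent Hypercyclicity Criterion; this should go through because the continuum in $K\cap C_\varphi$ supplies a one-parameter family of unimodular eigenvalues $\varphi(\alpha)$, which is exactly the spectral richness the criterion needs.
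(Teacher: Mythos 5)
Your plan for part (1) assembles the right ingredients --- a continuum of eigenvectors $e_\alpha$ with unimodular eigenvalues $\varphi(\alpha)$ for $\alpha\in K\cap C_\varphi$, and the observation that a superposition of such $e_\alpha$ against a density on $\Gamma\subset K$ automatically lands in $\Exp(\conv(\Gamma))$, so $K(f)\subset K$ --- but the tool you name, the Frequent Hypercyclicity Criterion, cannot be applied here: it requires, for each member $x$ of a dense set, unconditional convergence of both $\sum_n T^n x$ and $\sum_n S^n x$ with $TS=\id$, which for an eigenvector $e_\alpha$ would force $|\varphi(\alpha)|<1$ and $|\varphi(\alpha)|>1$ simultaneously; on $C_\varphi$ every available eigenvalue is unimodular. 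What your sketch actually needs is the unimodular eigenvector-field machinery (Bayart--Grivaux, Bonilla--Gro{\ss}e-Erdmann), and crucially it must be run inside the Fr\'echet space $\Exp(K')$ for a suitable $K'\subset K$ so that the resulting frequently hypercyclic vector retains the growth restriction --- verifying that is the real content, not a routine check. The paper sidesteps exactly this: it quotes the first author's theorem that $\FHC(e_1(D),\Exp([ia,ib]))\neq\emptyset$ for segments of the imaginary axis, localizes to a compact convex $\tilde{K}\subset K$ on which $\varphi$ is biholomorphic with $e^{[ia,ib]}\subset\varphi(\tilde{K})$, and transports frequent hypercyclicity by the quasi-conjugacy $\Phi_{\varphi^{-1}\circ e_1}$ (Propositions \ref{conjugation} and \ref{phiconj}), finishing with $\FHC(\varphi(D),\Exp(K))\subset\FHC(\varphi(D),H(\C))$. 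So your route for (1) is viable in spirit but is not a proof as it stands, and it is genuinely different from the paper's conjugation argument.

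Part (2) is where the proposal has a fatal gap. Radial growth bounds cannot produce the contradiction: already $M_{e_\tau,2}(r)\sim e^{\tau r}r^{-1/4}$, so lower bounds of the shape $e^{\tau_\varphi r}$ divided by a power of $r$ are perfectly compatible with a singleton diagram, and a function $f_0e_\alpha$ with $f_0$ of exponential type zero can exceed any such bound; a ``directional'' lower bound strong enough to force a nondegenerate diagram is essentially the statement being proved, so invoking it is circular. Your fallback --- extracting growth transverse to $\alpha$ from the positive-lower-density visits to a neighborhood of a target $g$ with fat $K(g)$ --- cannot work in the form stated, because it uses only approximation on compact sets, and that much is equally available to merely hypercyclic functions; Theorem \ref{satzeins} shows those \emph{can} have singleton diagram, so any argument not using the density quantitatively must fail. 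The paper's proof uses it through an interpolation-plus-uniqueness mechanism that your proposal never identifies: after noting $|\varphi(\lambda)|\geq 1$, normalize $\tphi:=\varphi/\varphi(\lambda)$ and set $h:=\Phi_{\log\tphi}f$, an entire function with $K(h)=\{0\}$ satisfying $h(n)=\varphi(\lambda)^{-n}\varphi(D)^nf(0)$; choose by Casorati--Weierstrass a single target $e_\alpha$ with $\arg\bigl(\varphi(\alpha)/\varphi(\lambda)\bigr)$ close to $\pi$, so that on the positive-lower-density set of $n$ with $\varphi(D)^nf\approx e_\alpha$ near $0$, the values $h(n)$ and $h(n+1)$ lie in sectors rotated against each other by more than $\pi/2$; the intermediate value theorem then produces real points $w_k\in(n_k,n_k+1)$, of positive lower density, at which $\Ree(h)\,\Imm(h)$ vanishes, i.e.\ real zeros of the exponential-type-zero function $h_1h_2$, and Boas's uniqueness theorem forces $h_1h_2\equiv 0$, which the sector information (e.g.\ $h(n_k)\neq 0$ in prescribed rotating sectors) rules out. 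Nothing in your proposal substitutes for this engine; in addition, your argument only addresses $\lambda\in C_\varphi$, while the theorem excludes every singleton, including $|\varphi(\lambda)|>1$ (handled by the same normalization) and $|\varphi(\lambda)|<1$ (handled by the orbit decay $\varphi(D)^nf(0)\to 0$ from the proof of Theorem \ref{hyperexp}).
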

In particular, the second part of the above result states that, in contrast to the case of hypercyclicity, a function $f$ of exponential type zero is never frequently hypercyclic for any differential operator $\varphi(D)$ (on $H(\C)$).\\

The proofs of Theorem \ref{satzeins}, \ref{satzzwei} and \ref{satzdrei} will be consequences of results given in the following three sections.

\section{Hypercyclicity of Differential Operators}\label{abschnittzwei}

We start this section with the introduction of a terminology, which will be convenient for what follows: Let $\Omega\subset \C$ be a domain and $K$ a compact subset of $\Omega$. A cycle $\Gamma$ in $\Omega\setminus K$, is called a \textbf{Cauchy cycle for $K$ in $\Omega$} if $\ind_\Gamma(u)=1$ for every $u\in K$ and $\ind_\Gamma(w)=0$ for every $w\in \C\setminus \Omega$. The existence of such a cycle is always guaranteed and, moreover, the Cauchy integral formula 
\[
f(z)=\frac{1}{2\pi i}\int_\Gamma \frac{f(\xi)}{\xi-z}\,d\xi
\]
is valid for every $z\in K$ (see \cite[Theorem 13.5, Theorem 10.35]{rudin}). By $|\Gamma|$ we denote the trace of $\Gamma$ and $\len(\Gamma):=\int_a^b|\Gamma(t)|\,dt$ is the length of $\Gamma$. In the following, $K$ always has simply connected complement. In this case, $\Gamma$ may be chosen as a simple closed path.  

For a given compact and convex set $K\subset \C$, we denote by $\Exp(K)$ the space of all entire functions $f$ of exponential type that satisfy $K(f)\subset K$. This space naturally appears in the context of analytic functionals (cf. \cite{martineau}, \cite{morimoto}, \cite{berenstein}). In the sequel, the differential operators are mainly considered on $\Exp(K)$, which is very convenient as we will see. 

For a function $f$ of exponential type, $\Bo f(z):=\sum_{n=0}^\infty f^{(n)}(0)/z^{n+1}$ is called the \textbf{Borel transform} of $f$. The Borel transform is a holomorphic function on some neighbourhood of infinity that vanishes at infinity. It is known that the conjugate indicator diagram $K(f)$ is the smallest convex, compact set such that $\Bo f$ admits an analytic continuation to $\C\setminus K(f)$ and that the inverse of the Borel transform is given by
\[
f(z)=\frac{1}{2\pi i} \int_\Gamma \Bo f(\xi)\, e^{\xi z}\, d\xi
\]
where $\Gamma$ is a Cauchy cycle for $K(f)$ in $\C$ ( cf. \cite{boas}, \cite{berenstein}). This integral formula is known as the \textbf{P\'{o}lya representation}.

Finally, we  make use of the following notions: $\C_\infty$ is the extended complex plane $\C\cup \{\infty\}$, $\D:=\{z:|z|<1\}$ and $\T:=\{z:|z|=1\}$. If $A\subset \C$, then $A^{-1}:=\{z:1/z\in A\}$, where as usual $1/0:=\infty$, $\overline{A}$ is the closure of $A$ and $\conv(A)$ is the convex hull of $A$. For an open set $\Omega \subset \C_\infty$ the space of functions holomorphic on $\Omega$ and vanishing at $\infty$ (if $\infty \in \Omega$) endowed with the topology of uniform convergence on compact subsets is denoted by $H(\Omega)$. Recall that a function $f$ is said to be holomorphic at infinity if $f(1/z)$ is holomorphic at the origin.

For the proof of the next proposition, we refer to the first chapter of \cite{morimoto}.
\begin{proposition}
Let $K\subset \C$ be a compact and convex set.
\begin{enumerate}
\item[(1)]For every $n\in \N$,
\[
||f||_{K,n}:=\sup\limits_{z\in\C}|f(z)|\,e^{-H_{K}(z)-\frac{1}{n}|z|}
\]
defines a norm $||.||_{K,n}$ on $\Exp(K)$ and the space $\Exp(K)$, endowed with the topology induced by the sequence $\{||.||_{K,n}:n\in\N\}$, is a Fr\'{e}chet space.
\item[(2)] The Borel transfrom
\[
\Bo=\Bo_K:\Exp(K)\rightarrow H(\C_\infty\setminus K), \ f\mapsto \Bo f|_{\C_\infty\setminus K}
\]
is an isomorphism.
\end{enumerate}
\end{proposition}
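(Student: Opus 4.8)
The plan is to establish (2) first — realizing $\Bo$ as a topological isomorphism onto a space that is manifestly Fréchet — and then to deduce (1), in particular the completeness of $\Exp(K)$, by transport of structure through this isomorphism. Throughout I would take as known the standard growth characterization of the conjugate indicator diagram: for an entire function $f$ of exponential type, $K(f)\subset K$ holds if and only if for every $\varepsilon>0$ there is $C_\varepsilon>0$ with $|f(z)|\le C_\varepsilon\, e^{H_K(z)+\varepsilon|z|}$ for all $z\in\C$; this is essentially the relation $r\,h_f(\Theta)=H_{K(f)}(z)\le H_K(z)$ together with a uniform-in-$\Theta$ majorant for the indicator. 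Applying it with $\varepsilon=1/n$ shows at once that each $\|\cdot\|_{K,n}$ is finite on $\Exp(K)$; homogeneity and the triangle inequality are immediate from the supremum, and $\|f\|_{K,n}=0$ forces $f\equiv 0$, so each $\|\cdot\|_{K,n}$ is a genuine norm. Since the sequence is non-decreasing in $n$ and separates points, the induced topology is metrizable, Hausdorff and locally convex, and only completeness remains open for (1).

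For (2) I would first record that $H(\C_\infty\setminus K)$, being the space of holomorphic functions vanishing at $\infty$ on an open subset of the sphere with the compact-open topology, is a Fréchet space (the vanishing condition cuts out a closed subspace of all holomorphic functions there). That $\Bo$ maps $\Exp(K)$ into $H(\C_\infty\setminus K)$ is exactly the cited characterization of $K(f)$ as the smallest compact convex set to the complement of which $\Bo f$ continues holomorphically. To produce the inverse I would use the Pólya representation $Pg(z):=\frac{1}{2\pi i}\int_{\Gamma}g(\xi)e^{\xi z}\,d\xi$, with $\Gamma=\Gamma_n$ a Cauchy cycle for $K$ lying within distance $1/n$ of $K$. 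A direct estimate gives $|Pg(z)|\le \frac{\len(\Gamma_n)}{2\pi}\big(\max_{|\Gamma_n|}|g|\big)\,e^{H_K(z)+|z|/n}$, using $\max_{\xi\in|\Gamma_n|}\Ree(\xi z)\le H_K(z)+|z|/n$; this shows simultaneously that $Pg\in\Exp(K)$ and that $\|Pg\|_{K,n}\le C_n\sup_{|\Gamma_n|}|g|$, so that each seminorm of $Pg$ is dominated by a single continuous seminorm on the target and $P$ is continuous.

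That $P$ and $\Bo$ are mutually inverse splits into two parts. The identity $P(\Bo f)=f$ for $f\in\Exp(K)$ is precisely the Pólya representation already recorded in the excerpt. For $\Bo(Pg)=g$ I would differentiate under the integral to get $(Pg)^{(k)}(0)=\frac{1}{2\pi i}\int_{\Gamma}g(\xi)\xi^{k}\,d\xi$, sum the geometric series for $|w|$ large to obtain $\Bo(Pg)(w)=\frac{1}{2\pi i}\int_{\Gamma}\frac{g(\xi)}{w-\xi}\,d\xi$, and then evaluate this Cauchy-type integral by pushing $\Gamma$ out to a large circle $C_R$: since $g$ vanishes at $\infty$ the contribution of $C_R$ tends to $0$, leaving $\Bo(Pg)(w)=g(w)$ on the unbounded component and hence everywhere by analytic continuation. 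The step that genuinely uses the hypothesis $g(\infty)=0$ is this deformation, and I expect it, together with the orientation and winding bookkeeping for the Cauchy cycle, to be the most delicate point of the argument.

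Finally I would prove continuity of $\Bo$ through the Laplace form of the Borel transform: choosing for a given $w$ a direction $e^{i\theta}$ that separates $w$ from $K$, one has $\Bo f(w)=e^{i\theta}\int_0^\infty f(te^{i\theta})e^{-w t e^{i\theta}}\,dt$, whence $|\Bo f(w)|\le \|f\|_{K,n}\big(\Ree(we^{i\theta})-H_K(e^{i\theta})-1/n\big)^{-1}$ once the denominator is positive. For a compact $L\subset\C_\infty\setminus K$ the separation margin can be bounded below by $\delta=\dist(L,K)>0$, so taking $n$ with $1/n<\delta/2$ yields $\sup_{L}|\Bo f|\le \tfrac{2}{\delta}\,\|f\|_{K,n}$ and $\Bo$ is continuous. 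A continuous linear bijection with continuous inverse is a topological isomorphism, which proves (2). Completeness of $\Exp(K)$, and with it the Fréchet property claimed in (1), then comes for free: a Cauchy sequence in $\Exp(K)$ is carried by $\Bo$ to a Cauchy sequence in the complete space $H(\C_\infty\setminus K)$, and applying the continuous inverse $P$ to its limit produces the required limit in $\Exp(K)$.
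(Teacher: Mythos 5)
Your argument is correct, but note that the paper does not actually prove this proposition: it is quoted from the first chapter of Morimoto's book, where it is embedded in the duality theory of analytic functionals (the paper's citations to Martineau and Berenstein--Gay point the same way). Your route is a genuinely different, self-contained one: you exhibit the inverse explicitly as the P\'olya integral $Pg(z)=\frac{1}{2\pi i}\int_\Gamma g(\xi)e^{\xi z}\,d\xi$ and prove continuity of both $\Bo$ and $P$ by direct contour and Laplace-integral estimates, which lets you transport completeness from $H(\C_\infty\setminus K)$ rather than invoke duality; your ordering is also the only admissible one, since an appeal to the open mapping or closed graph theorem would presuppose the very completeness of $\Exp(K)$ you are proving. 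A side benefit is that your estimates are exactly of the kind the paper itself uses later, e.g.\ $\sup_{\xi\in|\Gamma|}|\Bo f(\xi)|\le C\left\|f\right\|_{K,m}$ in the proofs of Proposition \ref{phiop} and Proposition \ref{densephi}. Three small points should be nailed down: the growth characterization of $K(f)\subset K$ needs the uniformity in $\Theta$ of the indicator inequality (Boas, Theorem 5.4.2), which you correctly flag; the rotated Laplace representation $\Bo f(w)=e^{i\theta}\int_0^\infty f(te^{i\theta})e^{-wte^{i\theta}}\,dt$ on $\Ree(we^{i\theta})>h_f(\theta)$ is the standard fact from Boas, Section 5.3, and should be cited; and for compact $L\subset\C_\infty\setminus K$ containing $\infty$ your $\delta$ should be read as $\inf_{w\in L\setminus\{\infty\}}\dist(w,K)$, which is positive since $\dist(w,K)\to\infty$ as $w\to\infty$, with $\Bo f(\infty)=0$ handling the point at infinity itself. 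With those references in place the proof is complete.
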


By differentiation of the parameter integral, the P\'{o}lya representation yields
\[
f^{(n)}(z)=\frac{1}{2\pi i} \int_\Gamma \Bo f(\xi)\, \xi^n \, e^{\xi z} \, d\xi.
\]
Inspired by this formula, we introduce a class of operators on $\Exp(K)$ by replacing $\xi^n$ in the above integral by a function holomorphic on some neighbourhood of $K$. We define $H(K)$ to be the space of germs of holomorphic functions on $K$, where $K\subset\C$ is some compact set.
In order to simplify the notation, we agree the following: An element of $H(K)$ shall always be identified with some of its representatives $\varphi$ which is defined on an open neighbourhood $\Omega_\varphi$ of $K$. In case that $K$ is convex we always assume $\Omega_\varphi$ to be simply connected (actually we may suppose $\Omega_\varphi$ to be even convex). 

Now, for a fixed compact and convex set $K\subset\C$ and a germ $\varphi\in H(K)$, we define
\begin{align}\label{tphidef}
\varphi(D) f(z):=\frac{1}{2\pi i}\int_\Gamma \Bo f (\xi)\,\varphi(\xi)\,e^{\xi z}\, d\xi
\end{align}
where $\Gamma$ is a Cauchy cycle for $K$ in $\Omega_\varphi$. Obviously, this definition is independent of the particular choice of $\Gamma$. If $\varphi$ extends to an entire function $\varphi(z)=\sum_{n=0}^\infty c_n z^n$, the interchange of integration and summation immediately yields
\[
\sum_{n=0}^\infty c_n f^{(n)}(z)=\frac{1}{2\pi i} \int_\Gamma \Bo f(\xi)\, \varphi(\xi) \, e^{\xi z} \, d\xi.
\]
Consequently, we see that the operators $\varphi(D)$ from (\ref{tphidef}) are a natural extension of the differential operators in (\ref{defdifferential}) and this justifies the notation. 

\begin{proposition}\label{phiop}
Let $K$ be a compact, convex set in $\C$ and $\varphi\in H(K)$. Then $\varphi(D)$ defined by (\ref{tphidef}) is a continuous operator on $\Exp(K)$.
\end{proposition}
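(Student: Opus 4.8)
The plan is to establish two things: that $\varphi(D)$ maps $\Exp(K)$ into itself, and that it is continuous for the Fréchet topology generated by the norms $\|\cdot\|_{K,n}$. Both follow from a single estimate of the defining integral on contours hugging $K$. First I would fix $f\in\Exp(K)$, write $g:=\Bo f\in H(\C_\infty\setminus K)$, and for small $\varepsilon>0$ choose the Cauchy cycle $\Gamma_\varepsilon$ for $K$ in $\Omega_\varphi$ to be the positively oriented boundary of the convex body $K+\tfrac{\varepsilon}{2}\overline{\D}$. Since $K$ is convex and $\varepsilon$ is small, this is a simple closed path whose trace lies in $(K+\varepsilon\overline{\D})\setminus K\subset\Omega_\varphi\setminus K$. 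Because the value of $\varphi(D)f$ does not depend on the contour, I may use $\Gamma_\varepsilon$ throughout; joint continuity of the integrand together with differentiation under the integral sign shows at once that $\varphi(D)f$ is entire.

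The key estimate is that for every $z\in\C$,
\[
|\varphi(D)f(z)|\le \frac{1}{2\pi}\,\len(\Gamma_\varepsilon)\,\Big(\max_{\xi\in|\Gamma_\varepsilon|}|g(\xi)|\Big)\Big(\max_{\xi\in|\Gamma_\varepsilon|}|\varphi(\xi)|\Big)\, e^{H_{K}(z)+\varepsilon|z|}=:M_\varepsilon\, e^{H_K(z)+\varepsilon|z|},
\]
where I use $\Ree(\xi z)\le H_{K+\varepsilon\overline{\D}}(z)$ for $\xi$ on the trace and the identity $H_{K+\varepsilon\overline{\D}}=H_K+\varepsilon|\cdot|$ for support functions of Minkowski sums. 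Taking $\varepsilon=1$ already shows that $\varphi(D)f$ is of exponential type. Taking logarithms, dividing by $r=|z|$, and letting $r\to\infty$ gives $h_{\varphi(D)f}(\Theta)\le H_K(e^{i\Theta})+\varepsilon$ for every $\varepsilon>0$, hence $h_{\varphi(D)f}\le H_K$ on $\T$; by homogeneity $H_{K(\varphi(D)f)}\le H_K$, so $K(\varphi(D)f)\subset K$ and $\varphi(D)f\in\Exp(K)$.

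For continuity I would read the same estimate with $\varepsilon=1/n$, which yields directly $\|\varphi(D)f\|_{K,n}\le M_{1/n}$. The only factor in $M_{1/n}$ depending on $f$ is $\max_{|\Gamma_{1/n}|}|\Bo f|$. Since $|\Gamma_{1/n}|$ is a compact subset of $\C_\infty\setminus K$, the map $h\mapsto\max_{|\Gamma_{1/n}|}|h|$ is a continuous seminorm on $H(\C_\infty\setminus K)$; composing it with the continuous Borel isomorphism $\Bo$ of the preceding proposition, $f\mapsto\max_{|\Gamma_{1/n}|}|\Bo f|$ becomes a continuous seminorm on $\Exp(K)$, and is therefore dominated by $C\|f\|_{K,m}$ for suitable $m=m(n)$ and $C=C(n)$. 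Combining, $\|\varphi(D)f\|_{K,n}\le C'\,\|f\|_{K,m(n)}$ for all $f$, which (with the evident linearity) is precisely continuity of $\varphi(D)$ between these Fréchet spaces.

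The main obstacle is controlling $\Bo f$ on a contour that approaches $\partial K$: a priori $\Bo f$ may blow up as $\xi\to K$, and one needs its size there to be dominated by the weighted sup-norms of $f$. I resolve this by invoking the topological Borel isomorphism rather than deriving a direct Cauchy- or Laplace-type estimate; conceptually this is where all the analytic content is concentrated, and carrying it out by hand would be the laborious part of the argument.
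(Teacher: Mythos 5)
Your proof is correct and follows essentially the same route as the paper's: you choose the Cauchy cycle inside $K+\tfrac{1}{n}\overline{\D}$, use the support-function identity $H_{K+\varepsilon\overline{\D}}=H_K+\varepsilon|\cdot|$ to bound $|e^{\xi z}|$ by $e^{H_K(z)+\varepsilon|z|}$ on the contour, and invoke the topological Borel isomorphism $\Bo:\Exp(K)\rightarrow H(\C_\infty\setminus K)$ to dominate $\sup_{\xi\in|\Gamma|}|\Bo f(\xi)|$ by $C\,\|f\|_{K,m}$, exactly as in the paper. The only cosmetic blemish is the aside ``taking $\varepsilon=1$'' (for large $\varepsilon$ the contour may leave $\Omega_\varphi$, and it is unnecessary anyway, since finiteness of $\|\varphi(D)f\|_{K,n}$ for all $n$ already yields both exponential type and $K(\varphi(D)f)\subset K$, making your separate indicator-function argument redundant as well).
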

\begin{proof}
For a given positive integer $n$, we choose $\Gamma$ such that $|\Gamma|\subset \frac{1}{n} \overline{\D}+K$. Then $H_{\conv(|\Gamma|)}\leq H_{K+\frac{1}{n}\overline{\D}}$ and that means $(\Ree(\xi z)- H_K(z)-n^{-1}|z|)\leq 0$ for all $\xi\in |\Gamma|$ and all $z\in\C$. Consequently, 
$
\left|e^{\xi z -H_K(z)-\frac{1}{n} |z|}\right|\leq 1
$  
for all $z\in\C$ and all $\xi\in|\Gamma|$.
As $\Bo:\Exp(K)\rightarrow H(\C_\infty\setminus K)$ is an isomorphism and $|\Gamma|$ is compact in $\C\setminus K$, there is an $m\in\N$ and a constant $C>0$ such that $\sup\{|\Bo f (\xi)|:\xi\in |\Gamma|\}\leq C\left\|f\right\|_{K,m}$. With $M:=\frac{1}{2\pi} \int_\Gamma|\varphi(\xi)|d\xi$, we now obtain
\begin{align*}
\left\|\varphi(D) f\right\|_{K,n}&=\sup\limits_{z\in\C} \left|\frac{1}{2\pi i}\int_\Gamma \varphi(\xi)\,\Bo f(\xi)\,e^{\xi z} \,d\xi\right|e^{-H_K(z)-\frac{1}{n} |z|}
\\[2mm]&\leq\sup\limits_{z\in\C} \frac{1}{2\pi}\int_\Gamma |\varphi(\xi)|\,|\Bo f (\xi)|\left|\,e^{\xi z-H_K(z)-\frac{1}{n} |z|}\right|\, d\xi\leq MC\,\left\|f\right\|_{K,m}.
\end{align*}
This proves that $\varphi(D)$ is a self-mapping on $\Exp(K)$ and the continuity of this operator.
\end{proof}

Now, our main result in this section is as follows:
\begin{theorem}\label{hyperexp}
Let $K$ be a convex, compact subset of $\C$ and $\varphi\in H(K)$ non-constant. Then $\HC(\varphi(D),\Exp(K))\neq\emptyset$ if and only if $\varphi(K)\cap \T\neq \emptyset$.
Further, if $\HC(\varphi(D),\Exp(K))\neq\emptyset$, then the set of all $f \in \HC(\varphi(D),\Exp(K))$ with $K(f)=K$ is residual in $\Exp(K)$ in the sense of Baire categories.
\end{theorem}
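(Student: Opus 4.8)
The plan is to transport everything to the Borel side, where $\varphi(D)$ becomes a concrete Toeplitz-type operator. Using the Borel isomorphism and the representation (\ref{tphidef}), a short residue computation shows that for $g=\Bo f\in H(\C_\infty\setminus K)$ one has $\Bo(\varphi(D)f)(w)=\frac{1}{2\pi i}\int_\Gamma \frac{g(\xi)\varphi(\xi)}{w-\xi}\,d\xi$, i.e. $\Bo\circ\varphi(D)\circ\Bo^{-1}=P_-M_\varphi$, where $M_\varphi$ is multiplication by $\varphi$ and $P_-$ projects onto the exterior part in the Laurent splitting across the annulus $\Omega_\varphi\setminus K$. From $P_-M_\varphi P_-M_\psi=P_-M_{\varphi\psi}$ I get the key multiplicativity $\varphi(D)^n=(\varphi^n)(D)$. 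The exponentials are eigenvectors: since $\Bo e_\alpha=1/(\cdot-\alpha)$ and $\alpha\in K$ lies inside $\Gamma$, the residue theorem gives $\varphi(D)e_\alpha=\varphi(\alpha)e_\alpha$, and $e_\alpha\in\Exp(K)$ exactly when $\alpha\in K$ (as $K(e_\alpha)=\{\alpha\}$). Finally, the duality $\Exp(K)'\cong H(K)$ with pairing $\langle e_\alpha,\psi\rangle=\psi(\alpha)$ yields the density criterion I use repeatedly: $\linspan\{e_\alpha:\alpha\in A\}$ is dense in $\Exp(K)$ as soon as $A\subset K$ has an accumulation point (any annihilating germ then vanishes on a set with a limit point in its connected domain, hence identically). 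Throughout, $\varphi(K)$ is a continuum because $K$ is connected.

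For the \emph{only if} direction I argue by contraposition. If $\varphi(K)\cap\T=\emptyset$ then, $\varphi(K)$ being a continuum, either $|\varphi|<1$ on all of $K$ or $|\varphi|>1$ on all of $K$. In the first case I choose $\Gamma\subset K+\tfrac1m\overline{\D}$ with $|\varphi|\le q<1$ there and repeat the estimate of Proposition \ref{phiop} with $\varphi^n$ in place of $\varphi$, obtaining $\|\varphi(D)^nf\|_{K,\cdot}\le C q^n\|f\|_{K,\cdot}\to0$; every orbit tends to $0$, so no orbit is dense. In the second case $1/\varphi\in H(K)$ and $(1/\varphi)(D)$ is the inverse of $\varphi(D)$ with $\big((1/\varphi)(D)\big)^n\to0$ pointwise by the first case; since $\Exp(K)$ is a Fréchet (hence barrelled) space, Banach--Steinhaus makes $\{((1/\varphi)(D))^n\}$ equicontinuous, and the standard argument (writing $f=(\varphi(D))^{-n}(\varphi(D)^nf-v)+(\varphi(D))^{-n}v$ along a sequence $\varphi(D)^nf\to v$) forces $f=0$. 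Hence no nonzero vector is hypercyclic.

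For the \emph{if} direction, assume $\varphi(K)\cap\T\neq\emptyset$ and pick $\alpha_1\in K$ with $\varphi(\alpha_1)\in\T$. When $\alpha_1$ can be taken in the interior of $K$, the open mapping theorem makes $\varphi$ carry a neighbourhood of $\alpha_1$ onto a neighbourhood of $\varphi(\alpha_1)$, so both $A_\pm:=\{\alpha\in K:|\varphi(\alpha)|\gtrless1\}$ contain nonempty open subsets of $K$; by the density criterion the eigenvector spans over $A_-$ and over $A_+$ are dense, and the Godefroy--Shapiro eigenvalue criterion yields hypercyclicity. \textbf{The main obstacle} is the degenerate situation in which $\varphi(K)$ meets $\T$ only along $\partial K$ (for instance $\varphi=\id$, $K=\overline{\D}$) or $K$ has empty interior (for instance $\varphi=e_1$ on a segment with $\varphi(K)\subset\T$): here there are no expanding eigenvectors, Godefroy--Shapiro is unavailable, and the action is genuinely shift-like. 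I would treat this by verifying the Hypercyclicity Criterion on the Borel side: the contracting dense set is $\linspan\{e_\alpha:|\varphi(\alpha)|<1\}$ when $\varphi(K)\cap\D\neq\emptyset$ (and the symmetric choice via $1/\varphi$ when $\varphi(K)\subset\C\setminus\D$), while a right inverse with vanishing powers is built locally from the principal parts $(\cdot-\alpha_1)^{-j}$ at $\alpha_1$ --- these are dense by Runge's theorem since $\C_\infty\setminus K$ has connected complement, and on them $P_-M_\varphi$ acts as $\varphi(\alpha_1)\,\mathrm{id}$ plus a weighted backward shift coming from the Taylor coefficients of $\varphi$ at $\alpha_1$. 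Matching the weights of this shift to the seminorms inherited from $H(\C_\infty\setminus K)$, together with the purely unimodular case $\varphi(K)\subset\T$, are the delicate points; this is where I expect the real work to lie, and where a direct Runge-type transitivity argument may in fact be cleaner than the criterion.

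For the final, \emph{residuality}, claim I combine two facts. Once transitivity is established, Birkhoff's theorem gives that $\HC(\varphi(D),\Exp(K))$ is a dense $G_\delta$ in the separable Fréchet space $\Exp(K)$. Moreover $\{f:K(f)=K\}$ is itself residual: for a countable family of boundary points $p_i\in\partial K$ with neighbourhoods $U_i$, the set $E_i$ of $f$ whose Borel transform extends holomorphically to $(\C_\infty\setminus K)\cup U_i$ is the continuous linear image of $H((\C_\infty\setminus K)\cup U_i)$ under restriction; being a proper subspace, it is meager (a continuous linear image of a Fréchet space in a Fréchet space is either the whole target or meager). Since $K(f)\subsetneq K$ forces $\Bo f$ to extend across some boundary point, hence into some $U_i$, the exceptional set $\{K(f)\neq K\}\subset\bigcup_i E_i$ is meager. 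The intersection of the two residual sets is again residual, in particular nonempty, which proves the theorem.
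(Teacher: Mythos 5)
Most of your outline coincides with the paper's machinery: the Borel-side identity $\Bo\circ\varphi(D)\circ\Bo^{-1}=P_-M_\varphi$ and the resulting multiplicativity $\varphi(D)^n=(\varphi^n)(D)$ is Proposition~\ref{iteration} (proved there via Lemma~\ref{gleichint}); your \emph{only if} direction (geometric contraction when $\varphi(K)\subset\D$, inversion by $(1/\varphi)(D)$ plus the fact that $T$ and $T^{-1}$ are simultaneously hypercyclic when $|\varphi|>1$ on $K$) is exactly the paper's argument, with your Banach--Steinhaus justification replacing the citation of \cite[Corollary 1.3]{baymathbook}; and the Godefroy--Shapiro step for a unimodular point in the interior of $K$ is the paper's first case. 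Your residuality argument is correct and genuinely different: where the paper invokes Nestoridis's theorem on non-extendable functions (Proposition~\ref{gdelta}), you cover $\{f:K(f)\neq K\}$ by countably many sets $E_i$ of functions whose Borel transform extends across a boundary disk, each meager by the meager-or-onto dichotomy for continuous linear images of Fr\'echet spaces; this works (convexity guarantees $K(f)\subsetneq K$ forces extension past some point of $\partial K$, and $1/(\cdot-p_i)$ witnesses that each $E_i$ is proper) and is arguably more self-contained than the paper's route, at the price of giving only meagerness of the complement rather than the finer $G_\delta$ structure.

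The genuine gap is precisely the case you flag as ``the main obstacle'': your proof stops where the theorem gets hard. When all unimodular points of $\varphi$ lie on $\partial K$, or $K$ has empty interior, your plan of verifying the Hypercyclicity Criterion on the Borel side is not carried out, and in the purely unimodular case it fails at the first step: if $\varphi(K)\subset\T$ (e.g.\ $K$ a segment of the imaginary axis and $\varphi=e_1$, exactly the situation relevant for Theorem~\ref{frequentphi}), your proposed contracting dense set $\linspan\{e_\alpha:|\varphi(\alpha)|<1\}$ is \emph{empty}, and no substitute is constructed; the weighted-shift analysis of $P_-M_\varphi$ on principal parts at a single point $\alpha_1$ is only sketched and would in any case require uniform control of the full Laurent splitting, not just the Taylor coefficients of $\varphi$ at $\alpha_1$. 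The paper's missing idea is much softer and avoids all shift analysis: an enlargement-and-transfer argument. For each $n$ choose a convex compact $L$ (say $L=K+\varepsilon\overline{\D}\subset\Omega_\varphi$) with $H_L(z)+\frac{1}{n+1}|z|\leq H_K(z)+\frac{1}{n}|z|$, so that $\|\cdot\|_{L,n+1}\leq\|\cdot\|_{K,n}$ and every nonempty open set of $\Exp(K)$ may be assumed of the form $\tilde U\cap\Exp(K)$ with $\tilde U\subset\Exp(L)$ open and nonempty. Since any $\alpha_1\in K$ with $|\varphi(\alpha_1)|=1$ is \emph{interior} to $L$, the open mapping theorem gives accumulating eigenvalue parameters in $L$ on both sides of $\T$, so Godefroy--Shapiro applies to $\Exp(L)$; transitivity of $\varphi(D)$ on $\Exp(L)$, together with the density of $\Exp(K)$ in $\Exp(L)$ (Proposition~\ref{dicht}), then yields transitivity on $\Exp(K)$, hence $\HC(\varphi(D),\Exp(K))\neq\emptyset$ by Birkhoff's theorem. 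Note that this same transfer is also what repairs the configuration you correctly identified ($K$ with nonempty interior but unimodular points only on $\partial K$, e.g.\ $\varphi=\id$, $K=\overline{\D}$), where the eigenvalue set $\{\alpha\in K:|\varphi(\alpha)|>1\}$ is empty and the paper's own write-up of the non-empty-interior case is, as literally stated, too quick; your instinct that ``a direct Runge-type transitivity argument may be cleaner'' was the right one --- the realization you lack is that one should enlarge the \emph{set} $K$ rather than analyze the operator more finely.
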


Before giving the proof, some auxiliary results for $\Exp(K)$ and $\varphi(D)$ are established.
\begin{proposition}\label{dicht}
Let $K\subset\C$ be a compact and convex set.
\begin{enumerate}
 \item[(1)] For any $\alpha\in K$, the set $\{Pe_\alpha: P \textnormal{ polynomial }\}$ is dense in $\Exp(K)$.
 \item[(2)] If $A$ is an infinite subset of $K$, then $\linspan\{e_\alpha:\alpha\in A\}$ is dense in $\Exp(K)$.
\end{enumerate}
\end{proposition}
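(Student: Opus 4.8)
The plan is to transport both statements through the Borel transform $\Bo_K:\Exp(K)\to H(\C_\infty\setminus K)$, which is a topological isomorphism by the preceding proposition; hence a subset of $\Exp(K)$ is dense if and only if its image under $\Bo_K$ is dense in $H(\C_\infty\setminus K)$, with the topology of uniform convergence on compact subsets. First I would record the explicit transforms. Since $e_\alpha^{(n)}(0)=\alpha^n$, summing the geometric series gives $\Bo e_\alpha(\xi)=(\xi-\alpha)^{-1}$; and because $z^k e_\alpha=\partial_\alpha^k e_\alpha$ while $\Bo$ commutes with $\partial_\alpha$, differentiating this identity $k$ times in the parameter $\alpha$ yields $\Bo(z^k e_\alpha)(\xi)=k!\,(\xi-\alpha)^{-(k+1)}$. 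Consequently the image of $\{P e_\alpha:P\text{ polynomial}\}$ is exactly the space $R_\alpha$ of finite linear combinations of $(\xi-\alpha)^{-m}$, $m\geq 1$, that is, the rational functions whose only pole lies at $\alpha$ and which vanish at $\infty$.

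For part (1) the task thus becomes to show that $R_\alpha$ is dense in $H(\C_\infty\setminus K)$. Here I would invoke Runge's theorem on the sphere: since $K$ is convex and compact, $\C_\infty\setminus(\C_\infty\setminus K)=K$ is connected, so a single prescribed pole suffices and any $g\in H(\C_\infty\setminus K)$ can be approximated, uniformly on compact subsets, by rational functions with poles only at the chosen point $\alpha\in K$. Such a function is automatically of the form $c_0+\sum_{m\geq 1}c_m(\xi-\alpha)^{-m}$, with no polynomial part since $\infty\notin K$. Evaluating the approximation near $\infty$, where $g$ vanishes, forces $c_0\to 0$, so after subtracting the constants one obtains approximants lying in $R_\alpha$. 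This gives density of $R_\alpha$, and pulling back through $\Bo_K$ proves (1).

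For part (2), after the same reduction it suffices to show that $\linspan\{(\xi-\alpha)^{-1}:\alpha\in A\}$ is dense in $H(\C_\infty\setminus K)$. Since $A$ is an infinite subset of the compact set $K$, it has an accumulation point $\alpha_0\in K$. The idea is to recover, for every $k\geq 0$, the function $(\xi-\alpha_0)^{-(k+1)}$ as a limit of divided differences of $\alpha\mapsto(\xi-\alpha)^{-1}$ taken at $k+1$ distinct nodes from $A$ clustering at $\alpha_0$. Using the contour representation of the divided difference one checks that, as the nodes shrink to $\alpha_0$, these combinations converge to $\tfrac{1}{k!}\partial_\alpha^{k}(\xi-\alpha)^{-1}\big|_{\alpha=\alpha_0}=(\xi-\alpha_0)^{-(k+1)}$, and the convergence is uniform for $\xi$ in compact subsets of $\C_\infty\setminus K$, because there $|\xi-\alpha|$ stays bounded below for all $\alpha\in K$. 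Hence the closure of the span contains all of $R_{\alpha_0}$, which is dense by part (1); therefore the span is dense and, transported back through $\Bo_K$, so is $\linspan\{e_\alpha:\alpha\in A\}$.

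The routine computations, namely the geometric-series evaluation of $\Bo e_\alpha$ and the divided-difference estimate, are mechanical. The step carrying the real content is the rational approximation in part (1): the point is that the convexity of $K$ makes its complement in $\C_\infty$ connected, so Runge's theorem permits all poles to be collapsed onto the single admissible point $\alpha$, while the vanishing at $\infty$ must be tracked to keep the approximants inside $R_\alpha$. In part (2) the corresponding delicate point is ensuring that the divided-difference limit takes place in the compact-open topology rather than merely pointwise.
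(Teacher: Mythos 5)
Your proposal is correct, and it rests on the same backbone as the paper's proof---transport through the Borel isomorphism followed by a Runge-type approximation---but the execution differs at both steps, genuinely so in part (2). For (1), the paper first treats the case $0\in K$ via the twisted transform $\TBo f=(1/\cdot)\,\Bo f(1/\cdot)$, an isomorphism onto $H(\C_\infty\setminus K^{-1})$: since $\infty\in K^{-1}$, the vanishing-at-$\infty$ constraint disappears after inversion, plain Runge gives density of the polynomials $\Sigma$ there, and the observation $\TBo^{-1}(\Sigma)=\Sigma$ closes the case; general $\alpha\in K$ is then reached through the isometric isomorphism $f\mapsto f/e_\alpha$ from $\Exp(K)$ onto $\Exp(K-\{\alpha\})$. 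You instead apply Runge directly on the sphere with the single pole pushed to $\alpha$ (legitimate, since $K$ is connected) and handle the constraint by hand, subtracting the constants $c_0\to g(\infty)=0$; this is the same argument in different coordinates---the paper's inversion is precisely the M\"obius change of variable that turns your sphere version of Runge into a plane statement and absorbs the bookkeeping of $c_0$, at the price of the extra conjugation step, and your identity $\Bo(z^k e_\alpha)(\xi)=k!\,(\xi-\alpha)^{-(k+1)}$ checks out. The real divergence is in (2): the paper simply cites a variant of Runge's theorem (\cite[Theorem 10.2]{Lubel}) giving density of $\linspan\{1/(\cdot-\alpha):\alpha\in A\}$ in $H(\C_\infty\setminus K)$ once $A$ accumulates in $K$, whereas you prove this from scratch by divided differences: at $k+1$ distinct nodes of $A$ near the accumulation point $\alpha_0$, the divided difference of $\alpha\mapsto(\xi-\alpha)^{-1}$ (which one can even write in closed form as $\prod_{j=1}^{k+1}(\xi-\alpha_j)^{-1}$, making your contour estimate unnecessary) converges to $(\xi-\alpha_0)^{-(k+1)}$ uniformly on compact subsets of $\C_\infty\setminus K$, including near $\infty$, exactly because $|\xi-\alpha|$ is bounded below there; this places all of $R_{\alpha_0}$ in the closed span and reduces (2) to (1). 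Your route is self-contained and constructive, avoiding the external reference (whose proof runs through duality and the identity theorem), at the cost of length; the two uniformity points you flag are indeed the only places where care is needed, and both hold as stated.
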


\begin{proof}
Let $\Sigma$ denote the space of all polynomials.
In a first case we assume that $0\in K$. For a function $f\in \Exp(K)$ we have that $\TBo f:=(1/\cdot)\,\Bo f(1/\cdot) \in H(\C_\infty\setminus K^{-1})$. Since $\Bo:\Exp(K)\rightarrow H(\C_\infty\setminus K)$ is an isomorphism, one verifies that $\TBo:\Exp(K)\rightarrow H(\C_\infty\setminus K^{-1})$ is also an isomorphism.  
Now, $\Sigma$ is dense in $H(\C_\infty\setminus K^{-1})$ by Runge's theorem and observing that $\TBo^{-1}(\Sigma)=\Sigma$ this shows that $\Sigma$ is dense in $\Exp(K)$.\\
Let $K$ be an arbitrary compact and convex set. By means of \cite[Theorem 5.4.12]{boas} it follows that for every entire function $f$ of exponential type and $\alpha\in\C$ we have $K(fe_{-\alpha})=K(f)-\{\alpha\}$. Thus, if $g=f/e_\alpha$ for an $f\in \Exp(K)$ and $\alpha\in K$,  
\begin{align*}
 ||f||_{K,n}&=\sup_{z\in\C} |g(z)||e^{\alpha z}|e^{-H_K(z)-\frac{1}{n}|z|}\\
 &=\sup_{z\in\C} |g(z)|e^{-H_K(z)-H_{\{-\alpha\}}(z)-\frac{1}{n}|z|} \\&=\sup_{z\in\C} |g(z)|e^{-H_{K-\{\alpha\}}(z)-\frac{1}{n}|z|}\\
 &=||g||_{K-\{\alpha\},n}
\end{align*}
which shows that $f\mapsto f/e_\alpha$ is an isometric isomorphism from $\Exp(K)$ to $\Exp(K-\{\alpha\})$. With the first part, this implies (1).\\
Without loss of generality, we may assume $0\notin A$. It is easily seen that $\Bo e_\alpha=1/(\cdot-\alpha)$ and thus $\Bo(\linspan\{e_\alpha:\alpha\in A\})=\linspan\{1/(\cdot-\alpha):\alpha\in A\}$. Since $A$ has an accumulation point in $K$, a variant of Runge's theorem (see. \cite[Theorem 10.2]{Lubel}) yields that $\linspan\{1/(\cdot-\alpha):\alpha\in A\}$ is dense in $H(\C_\infty\setminus K)$. According to the fact that  $\Bo:\Exp(K)\rightarrow H(\C_\infty\setminus K)$ is an isomorphism, this shows (2).
\end{proof}

A germ $\varphi\in H(K)$ is said to be zero-free if there exists a representative $\varphi$ which is zero-free on some open neighbourhood of $K$. In this case, we always assume that $\Omega_\varphi$ is so small that $\varphi$ is zero-free on $\Omega_\varphi$ and thus $1/\varphi \in H(\Omega_\varphi)$. 

\begin{proposition}\label{iteration}
Let $K\subset\C$ be a compact, convex set and $\varphi,\psi$ in $H(K)$. Then we have $\varphi(D)\psi(D)=\varphi\, \psi(D)$.
In particular, if $\varphi$ is zero-free, then 
\[
\varphi(D)\,(1/\varphi)(D)=(1/\varphi)(D)\,\varphi(D)=\id_{\Exp(K)}
\]
and hence $ \varphi(D)$ is invertible with $\varphi(D)^{-1}=(1/\varphi)(D)$. 
\end{proposition}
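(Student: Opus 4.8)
The plan is to transport the whole computation to the Borel side, where both operators act by an explicit Cauchy-type integral, and to establish the product rule $\varphi(D)\psi(D)=(\varphi\psi)(D)$ there; the assertion about inverses then drops out formally. Throughout, the operators are genuine self-maps of $\Exp(K)$ by Proposition \ref{phiop}, and I will use that $\Bo$ is an isomorphism onto $H(\C_\infty\setminus K)$.

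First I would record how $\varphi(D)$ looks after conjugation by $\Bo$. For $f\in\Exp(K)$ the function $\varphi(D)f$ is entire of exponential type, and differentiating the parameter integral (\ref{tphidef}) gives $(\varphi(D)f)^{(n)}(0)=\frac{1}{2\pi i}\int_\Gamma \Bo f(\xi)\,\varphi(\xi)\,\xi^n\,d\xi$. Inserting this into the defining series of the Borel transform and summing the geometric series $\sum_n \xi^n/w^{n+1}=1/(w-\xi)$ for $|w|>\max_{\xi\in|\Gamma|}|\xi|$ (interchange justified by uniform convergence on the compact $|\Gamma|$) yields
\[
\Bo(\varphi(D)f)(w)=\frac{1}{2\pi i}\int_\Gamma \frac{\Bo f(\xi)\,\varphi(\xi)}{w-\xi}\,d\xi ,
\]
and this identity persists for all $w$ outside $\Gamma$ by analytic continuation.

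The core step is the product rule. I would set $g:=\psi(D)f$ and choose two nested Cauchy cycles $\Gamma'$ and $\Gamma$ for $K$ inside a common convex neighbourhood of $K$ on which both $\varphi$ and $\psi$ are holomorphic, with $\Gamma$ enclosing $|\Gamma'|$ (so that $\ind_\Gamma(\zeta)=1$ for $\zeta\in|\Gamma'|$), and with $w$ taken outside $\Gamma$. Applying the displayed formula to $\varphi(D)g$, substituting the analogous integral for $\Bo g$, and exchanging the two contour integrals by Fubini (the integrand is continuous on the compact product $|\Gamma|\times|\Gamma'|$) reduces matters to evaluating
\[
\frac{1}{2\pi i}\int_\Gamma \frac{\varphi(\xi)}{(w-\xi)(\xi-\zeta)}\,d\xi .
\]
Partial fractions in $\xi$ split this into two pieces: the piece with pole at $\xi=w$ integrates to $0$ because $w$ lies outside $\Gamma$ while $\varphi$ is holomorphic inside $\Gamma$, and the piece with pole at $\xi=\zeta$ evaluates to $\varphi(\zeta)$ by the Cauchy integral formula since $\zeta$ is enclosed by $\Gamma$. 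Hence the inner integral collapses to $\varphi(\zeta)/(w-\zeta)$, and the double integral becomes
\[
\Bo(\varphi(D)\psi(D)f)(w)=\frac{1}{2\pi i}\int_{\Gamma'} \frac{\Bo f(\zeta)\,\psi(\zeta)\,\varphi(\zeta)}{w-\zeta}\,d\zeta=\Bo((\varphi\psi)(D)f)(w).
\]
Injectivity of $\Bo$ then gives $\varphi(D)\psi(D)f=(\varphi\psi)(D)f$ for every $f$.

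The second assertion is immediate from this: the constant germ $\mathbf{1}$ satisfies $\mathbf{1}(D)=\id_{\Exp(K)}$ directly from the P\'{o}lya representation (take $\varphi\equiv 1$ in (\ref{tphidef})), so if $\varphi$ is zero-free, whence $1/\varphi\in H(\Omega_\varphi)$, the product rule yields $\varphi(D)\,(1/\varphi)(D)=(\varphi\cdot 1/\varphi)(D)=\mathbf{1}(D)=\id$ and likewise in the reverse order, giving invertibility with $\varphi(D)^{-1}=(1/\varphi)(D)$. I expect the only delicate point to be the bookkeeping of the two nested contours, namely ensuring that $\Gamma'$ can be pushed close enough to $K$ to be enclosed by $\Gamma$ while both cycles stay inside the common domain of holomorphy of $\varphi$ and $\psi$; the standing convention that the neighbourhoods may be taken convex removes any genuine obstruction here.
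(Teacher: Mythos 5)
Your proof is correct, but it takes a genuinely different route from the paper's. The paper obtains the proposition as an immediate consequence of Lemma \ref{gleichint}, which it proves by a duality/density argument: the functional $h\mapsto\int_\Gamma\bigl(\Bo f(\xi)\,\varphi(\xi)-\Bo(\varphi(D)f)(\xi)\bigr)h(\xi)\,d\xi$ on $H(\Omega_\varphi)$ annihilates every exponential $e_\alpha$ (both integrals compute $2\pi i\,\varphi(D)f(\alpha)$ by the P\'olya representation), and $\linspan\{e_\alpha:\alpha\in\C\}$ is dense in $H(\Omega_\varphi)$ by a Runge-type argument resting on Proposition \ref{dicht}; the product rule then drops out by taking $h(\xi)=\psi(\xi)e^{\xi z}$. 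You instead work entirely on the Borel side: you first derive the representation $\Bo(\varphi(D)f)(w)=\frac{1}{2\pi i}\int_\Gamma \frac{\Bo f(\xi)\,\varphi(\xi)}{w-\xi}\,d\xi$ for $w$ exterior to $\Gamma$, then iterate it over nested contours and collapse the double integral via Fubini and partial fractions; the inner integral indeed reduces to $\varphi(\zeta)/(w-\zeta)$ because $\ind_\Gamma(\zeta)=1$, $\ind_\Gamma(w)=0$, and the general Cauchy theorem applies in $\Omega_\varphi\setminus\{w\}$, while convexity of the common neighbourhood makes the nested choice of $\Gamma'$ and $\Gamma$ unproblematic, as you note. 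What each approach buys: the paper's lemma is more general (arbitrary $h\in H(\Omega_\varphi)$) and is reused later, e.g.\ in the proof of Proposition \ref{phiconj}(2), whereas your argument is self-contained and avoids the density machinery altogether; your displayed Borel formula is in effect the $\varphi(D)$-analogue of the paper's Proposition \ref{continuation} for $\Phi_\varphi$, and can also be read as the special case $h=1/(w-\cdot)$, $w\notin\Omega_\varphi$, of Lemma \ref{gleichint}. One step you leave implicit: your two integral representations identify $\Bo(\varphi(D)\psi(D)f)$ and $\Bo((\varphi\psi)(D)f)$ a priori only on the exterior of $\Gamma$, so before invoking injectivity of $\Bo$ you should appeal to the identity theorem on the connected domain $\C_\infty\setminus K$ to conclude the transforms coincide as elements of $H(\C_\infty\setminus K)$; this is routine and does not affect correctness.
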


Proposition \ref{iteration} is an immediate consequence of  

\begin{lemma}\label{gleichint}
Let $K$ be a compact, convex set in $\C$, $f\in \Exp(K)$ and $\varphi\in H(K)$. Then for all $h\in H(\Omega_\varphi)$, we have
\[
\int_\Gamma \Bo f(\xi)\,\varphi(\xi)\,h(\xi)\,d\xi=\int_\Gamma \Bo (\varphi(D) f) (\xi)\, h(\xi)\,d\xi
\]
where $\Gamma$ is a Cauchy cycle for $K$ in $\Omega_\varphi$.
\end{lemma}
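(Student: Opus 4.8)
The plan is to compute the Borel transform of $g:=\varphi(D)f$ explicitly and then to compare it with the product $\varphi\cdot\Bo f$ by means of a double-contour argument. By Proposition \ref{phiop} we already know $g\in\Exp(K)$, so $\Bo g\in H(\C_\infty\setminus K)$ is at our disposal. First I would differentiate the parameter integral (\ref{tphidef}) defining $g$ under the integral sign---legitimate since the integrand is jointly continuous and entire in $z$, and $\Gamma$ is compact---to obtain $g^{(k)}(0)=\frac{1}{2\pi i}\int_\Gamma \Bo f(\xi)\varphi(\xi)\xi^k\,d\xi$ for every $k$. Substituting this into the defining series $\Bo g(w)=\sum_{k\ge 0}g^{(k)}(0)/w^{k+1}$ and interchanging summation and integration (justified by uniform convergence of $\sum_k \xi^k/w^{k+1}$ on $|\Gamma|$ once $|w|>\max_{\xi\in|\Gamma|}|\xi|$) yields the Cauchy-type formula $\Bo g(w)=\frac{1}{2\pi i}\int_\Gamma \frac{\Bo f(\xi)\varphi(\xi)}{w-\xi}\,d\xi$ for all large $w$.

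Next I would upgrade this identity to every $w$ in the unbounded component of $\C\setminus|\Gamma|$: both sides are holomorphic there (the right-hand side as a Cauchy integral of a continuous function off the cycle, the left-hand side because $K$ is enclosed by $\Gamma$ so that component avoids $K$), and they already coincide near $\infty$, so the identity theorem propagates the equality throughout that component.

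Then comes the main device. I would fix two Cauchy cycles $\Gamma_1,\Gamma_2$ for $K$ in $\Omega_\varphi$---concretely $\Gamma_j=\partial(K+\varepsilon_j\overline{\D})$ with $0<\varepsilon_1<\varepsilon_2$ small enough that $K+\varepsilon_2\overline{\D}\subset\Omega_\varphi$---so that $|\Gamma_2|$ lies outside $\Gamma_1$ while $|\Gamma_1|$ lies inside $\Gamma_2$; that is, $\ind_{\Gamma_1}(\xi)=0$ for $\xi\in|\Gamma_2|$ and $\ind_{\Gamma_2}(\zeta)=1$ for $\zeta\in|\Gamma_1|$. Applying the formula from the previous step with cycle $\Gamma_1$ and argument $w=\xi\in|\Gamma_2|$ gives $\Bo g(\xi)=\frac{1}{2\pi i}\int_{\Gamma_1}\frac{\Bo f(\zeta)\varphi(\zeta)}{\xi-\zeta}\,d\zeta$ on $|\Gamma_2|$. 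Multiplying by $h(\xi)$, integrating over $\Gamma_2$, interchanging the two integrations by Fubini, and evaluating the inner integral by the homological Cauchy integral formula $\frac{1}{2\pi i}\int_{\Gamma_2}\frac{h(\xi)}{\xi-\zeta}\,d\xi=\ind_{\Gamma_2}(\zeta)\,h(\zeta)=h(\zeta)$ (valid since $h\in H(\Omega_\varphi)$ and $\zeta\in|\Gamma_1|$ is enclosed by $\Gamma_2$) produces $\int_{\Gamma_2}\Bo g(\xi)h(\xi)\,d\xi=\int_{\Gamma_1}\Bo f(\zeta)\varphi(\zeta)h(\zeta)\,d\zeta$. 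Finally, since $\Bo g\cdot h$ and $\Bo f\cdot\varphi\cdot h$ are holomorphic on $\Omega_\varphi\setminus K$ and any two Cauchy cycles for $K$ in $\Omega_\varphi$ are homologous there, each side is independent of the chosen cycle, so both $\Gamma_1$ and $\Gamma_2$ may be replaced by the given $\Gamma$, which yields the claimed identity.

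I expect the main obstacle to be the bookkeeping in this double-contour step: one must secure the existence of nested Cauchy cycles with the correct winding numbers and invoke the homology version of Cauchy's theorem (Rudin, Theorem 10.35) with care, rather than the routine limit interchanges, which are harmless given the compactness of the cycles.
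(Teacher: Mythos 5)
Your proof is correct, but it follows a genuinely different route from the paper. The paper's proof is a soft duality argument: it forms the continuous linear functional $\left\langle \Lambda,h\right\rangle=\int_\Gamma \left(\Bo f(\xi)\,\varphi(\xi)-\Bo(\varphi(D)f)(\xi)\right)h(\xi)\,d\xi$ on $H(\Omega_\varphi)$, observes via the P\'olya representation that $\left\langle \Lambda,e_\alpha\right\rangle=0$ for every $\alpha\in\C$ (both integrals equal $2\pi i\,\varphi(D)f(\alpha)$), and then kills $\Lambda$ by continuity, since $\linspan\{e_\alpha:\alpha\in\C\}$ is dense in $H(\Omega_\varphi)$ -- a density fact obtained from Runge's theorem and hinging on the standing convention that $\Omega_\varphi$ is simply connected. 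You instead compute $\Bo(\varphi(D)f)$ explicitly: differentiating (\ref{tphidef}) under the integral gives the Taylor coefficients of $\varphi(D)f$ at $0$, summing the Borel series yields the Cauchy-transform identity $\Bo(\varphi(D)f)(w)=\frac{1}{2\pi i}\int_\Gamma \frac{\Bo f(\xi)\varphi(\xi)}{w-\xi}\,d\xi$ near $\infty$, and the identity theorem extends it to the unbounded component of $\C\setminus|\Gamma|$; your nested-cycle Fubini step and the appeal to the homology form of Cauchy's theorem (both for the inner evaluation of $h$ and for the final cycle-independence, where you correctly note that any two Cauchy cycles for $K$ in $\Omega_\varphi$ are homologous in $\Omega_\varphi\setminus K$) are all sound, and there is no circularity, since Proposition \ref{phiop} -- which you need to know $\Bo(\varphi(D)f)\in H(\C_\infty\setminus K)$ -- is proved independently of the lemma. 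What each approach buys: the paper's argument is shorter and avoids contour bookkeeping entirely, at the cost of invoking the density of exponentials in $H(\Omega_\varphi)$ and hence simple connectivity of $\Omega_\varphi$; your argument is constructive and produces, as a by-product, the Cauchy-integral representation of $\Bo(\varphi(D)f)$ off a neighbourhood of $K$, which is in essence the $\varphi(D)$-analogue of what the paper later establishes for $\Phi_\varphi$ in Proposition \ref{continuation}, so your route also makes the later continuation result look less ad hoc.
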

\begin{proof}
Considering Runge's theorem, one verifies that, for fixed $\alpha\in \C$, $\{Pe_\alpha: P \textnormal{ polynomial }\}$ is dense in $H(\Omega)$ whenever $\Omega$ is a simply connected open subset of $\C$. According to the fact that $Pe_\alpha\in \Exp(K)$ for $\alpha\in K$, this shows that $\Exp(K)$ is densely embedded in $H(\Omega)$ for every non-empty, compact and convex $K\subset\C$. Thus, as a consequence of Proposition \ref{dicht} (2), $E:=\linspan\{e_\alpha:\alpha\in\C\}$ is dense in $H(\Omega_\varphi)$ since $\Omega_\varphi$ is simply connected. 

We consider the functional 
\[\left\langle \Lambda,h\right\rangle:=\int_\Gamma \left(\Bo f(\xi)\,\varphi(\xi)-\Bo (\varphi(D) f) (\xi)\right)\,h(\xi)\,d\xi
\]
on $H(\Omega_\varphi)$. With the P\'{o}lya representation for $\varphi(D) f$, the following holds:
\[
\frac{1}{2\pi i}\int_\Gamma \Bo(\varphi(D) f)(\xi)\,e^{\xi \alpha}\,d\xi=\varphi(D) f(\alpha)=\frac{1}{2\pi i}\int_\Gamma \Bo f(\xi)\,\varphi(\xi)\,e^{\xi \alpha}\,d\xi.
\]
Hence $\left\langle \Lambda,e_\alpha\right\rangle=0$ for all $\alpha\in \C$ and consequently $\Lambda|_E=0$. As $E$ is dense in $H(\Omega_\varphi)$, we have $\Lambda=0$. 
\end{proof}

\begin{proposition}\label{gdelta}
Let $K\subset \C$ be a compact, compact set. Then the set of all $f\in\Exp(K)$ with $K(f)=K$ is residual in $\Exp(K)$.
\end{proposition}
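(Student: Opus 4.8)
The plan is to prove that the set $G:=\{f\in\Exp(K):K(f)=K\}$ is a dense $G_\delta$ in the Fr\'echet space $\Exp(K)$; since a Fr\'echet space is a Baire space, this already yields that $G$ is residual. Throughout I would exploit that $K(f)\subseteq K$ holds automatically for $f\in\Exp(K)$, so that the support functions satisfy $h_f(\Theta)=H_{K(f)}(e^{i\Theta})\le H_K(e^{i\Theta})$ for every $\Theta$. Because support functions of compact convex sets are continuous and determine the set, $K(f)=K$ is equivalent to $h_f(\Theta)=H_K(e^{i\Theta})$ for all $\Theta$, and in view of the one-sided inequality and continuity this is in turn equivalent to $h_f(\Theta)\ge H_K(e^{i\Theta})$ for every $\Theta$ in a fixed countable dense set $Q\subseteq[-\pi,\pi]$. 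Hence $G=\bigcap_{\Theta\in Q}\{f:h_f(\Theta)\ge H_K(e^{i\Theta})\}$.

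To see that $G$ is $G_\delta$, I would unfold each factor via the definition of the indicator as a $\limsup$. Writing $c_\Theta:=H_K(e^{i\Theta})$, the condition $h_f(\Theta)\ge c_\Theta$ says that for every rational $\varepsilon>0$ and every $N\in\N$ there is a rational $r>N$ with $|f(re^{i\Theta})|>e^{(c_\Theta-\varepsilon)r}$. Each set $\{f:|f(re^{i\Theta})|>e^{(c_\Theta-\varepsilon)r}\}$ is open, since point evaluations are continuous on $\Exp(K)$ (the norms $\|\cdot\|_{K,n}$ dominate $|f(z)|$ up to a fixed constant for each $z$). Taking the countable union over rational $r>N$, then the intersection over $N$, over rational $\varepsilon$, and finally over $\Theta\in Q$ exhibits $G$ as a countable intersection of open sets. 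By Baire's theorem it then remains only to prove that $G$ is dense.

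For density I would first fix one \emph{full} function $w\in\Exp(K)$ with $K(w)=K$. A natural candidate is $w=\sum_k\lambda_k e_{\beta_k}$, where $\{\beta_k\}$ is dense in the set of extreme points of $K$, all $\lambda_k\ne0$, and $\sum_k|\lambda_k|<\infty$ (so the series converges, using that $\|e_\beta\|_{K,n}\le1$ for $\beta\in K$). Then $\Bo w=\sum_k\lambda_k/(\cdot-\beta_k)$ is singular along a dense subset of $\mathrm{ext}(K)$, so by the minimality of the conjugate indicator diagram together with Minkowski's theorem ($\conv(\mathrm{ext}(K))=K$) no proper compact convex subset of $K$ can carry a continuation of $\Bo w$, which forces $K(w)=K$. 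Given now $f_0\in\Exp(K)$, an index $n$ and $\varepsilon>0$, I would use Proposition \ref{dicht}(2) to choose a finite exponential sum $g$ with $\|f_0-g\|_{K,n}<\varepsilon/2$, and then pass to $g+\delta w$ for a small $\delta\ne0$ with $\delta\sum_k|\lambda_k|<\varepsilon/2$. Since $g$ is a \emph{finite} sum of exponentials, adding $\delta w$ does not destroy the singular behaviour of $\Bo w$ near the $\beta_k$ (for all but finitely many $\delta$), so the same extreme-point argument gives $K(g+\delta w)=K$, while $g+\delta w$ lies in the prescribed neighbourhood of $f_0$. This proves $G$ dense, and therefore residual.

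The main obstacle is the density step, and within it the verification that $w$ genuinely satisfies $K(w)=K$: because the singularities $\{\beta_k\}$ of $\Bo w$ are only dense (neither isolated nor a closed set from which individual residues may be extracted by contour integration), one must argue carefully that $\Bo w$ admits no holomorphic continuation across any extreme point of $K$, ruling out cancellation among the infinitely many nearby simple poles. Equivalently, one must show that the indicator of $w$ attains the full value $H_K(e^{i\Theta})$ in every direction; controlling the indicator of an infinite exponential series against such cancellation is the delicate point, whereas the reduction to a countable dense set of directions and the $G_\delta$ bookkeeping are routine.
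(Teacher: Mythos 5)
Your $G_\delta$ half is correct and essentially routine, as you say: for $f\in\Exp(K)$ one has $h_f\le H_K(e^{i\cdot})$ automatically, both sides are support functions and hence continuous, so equality on a countable dense set of directions characterizes $K(f)=K$, and the unfolding into countably many open conditions via continuous point evaluations is sound. The genuine gap is exactly where you place it, in the density half: the pivotal claim $K(w)=K$ for $w=\sum_k\lambda_k e_{\beta_k}$ is asserted but not proved, and with \emph{arbitrary} $\lambda_k\neq0$ it is a real issue, not a technicality. The phrase ``minimality of the conjugate indicator diagram together with Minkowski's theorem'' only reduces the claim to showing that every $\beta_k$ is a singular point of $\Bo w=\sum_k\lambda_k/(\cdot-\beta_k)$, and that is precisely what is missing: since the poles accumulate on themselves, no residue extraction or isolated-singularity argument applies, and cancellation among Cauchy kernels with $\sum_k|\lambda_k|<\infty$ is a genuine phenomenon (Wolff's classical example produces a nontrivial such series vanishing identically on $|\xi|>1$; ruling out continuation across an arc carrying dense atoms amounts to a local F.\ and M.\ Riesz--type theorem). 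So as written the density step does not stand.

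The gap is closable, and cheaply, if you use the freedom you gave yourself in choosing the coefficients: take $\lambda_k>0$. Fix $\beta_k$, let $e^{i\gamma_k}$ be the outward unit normal of a supporting line of $K$ at $\beta_k$, and put $\xi=\beta_k+\varepsilon e^{i\gamma_k}$. Since $\Ree\bigl((\beta_k-t)e^{-i\gamma_k}\bigr)\ge0$ for all $t\in K$, every term satisfies $\Ree\bigl(e^{i\gamma_k}\lambda_j/(\xi-\beta_j)\bigr)=\lambda_j\,\Ree\bigl((\xi-\beta_j)e^{-i\gamma_k}\bigr)/|\xi-\beta_j|^2>0$, so no cancellation can occur, while the $j=k$ term alone equals $\lambda_k/\varepsilon$. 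Hence $|\Bo w(\xi)|\to\infty$ as $\varepsilon\downarrow0$, each $\beta_k$ is singular, the singular set is dense in the extreme points, and $K(w)=K$ follows as you intended; the perturbation $g+\delta w$ then only needs the remark that $\Bo g$ has finitely many simple poles (and $\delta>0$ avoids the one bad residue value at any coinciding pole). For comparison, the paper takes an entirely different and much shorter route: by a theorem of Nestoridis, the functions exactly holomorphic in $\C_\infty\setminus K$ (non-extendable at every boundary point) form a dense $G_\delta$ in $H(\C_\infty\setminus K)$, and pulling this set back under the Borel isomorphism $\Bo:\Exp(K)\rightarrow H(\C_\infty\setminus K)$ yields residuality at one stroke, with both density and the $G_\delta$ structure inherited. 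Your approach, once repaired as above, is more self-contained --- it replaces Nestoridis's theorem by an explicit construction and a convexity--positivity estimate --- at the cost of length and of the indicator-function bookkeeping the paper avoids entirely.
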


\begin{proof}
Let $M\subset H(\C_\infty\setminus K)$ be the set of functions that are exactly holomorphic in $\C_\infty\setminus K$, that means, for every $w\in\C\setminus K$ the radius of convergence of the Taylor series with center $w$ equals $\dist(w,K)$.
Due to a result of V. Nestoridis (see \cite[Theorem 4.5]{nestor}), $M$ is a dense $G_\delta$-set in $H(\C_\infty\setminus K)$. Since $\Bo^{-1}(M)\subset \{f\in\Exp(K):K(f)=K\}$ and $\Bo:\Exp(K)\rightarrow H(\C_\infty\setminus K)$ is an isomorphism, we obtain the assertion.
\end{proof}

Finally, we are prepared for the
\begin{proof}[Proof of Theorem \ref{hyperexp}]
Firstly, assume that $\varphi(K)\subset \D$. Let $\Gamma$ be a Cauchy cycle for $K$ in $\Omega_\varphi$ being so close to $K$ that $|\varphi|<\delta<1$ on $\Gamma$. Then, considering Proposition \ref{iteration}, for any $f\in \Exp(K)$ we have
\[|\varphi(D)^nf(0)|=\left|\frac{1}{2\pi i} \int_\Gamma \Bo f(\xi)\,\varphi^n(\xi)\,d\xi\right|\leq\frac{\delta^n}{2\pi}\int_\Gamma |\Bo f (\xi)|\,d\xi\rightarrow 0
\]
as $n$ tends to infinity. Consequently, $\varphi(D)$ cannot be hypercyclic on $\Exp(K)$. If $\varphi(K)\subset \C\setminus \overline{\D}$, then $\varphi(D)$ is zero-free, as an element of $H(K)$, and thus, by Proposition \ref{iteration}, it is invertible on $\Exp(K)$ with $\varphi(D)^{-1}=(1/\varphi)(D)$. Now, since $(1/\varphi)(K)\subset \D$, we have $\HC((1/\varphi)(D),\Exp(K))=\emptyset$, and this is equivalent to $\HC(\varphi(D),\Exp(K))=\emptyset$ (see \cite[Corollary 1.3]{baymathbook}).\\
Let us now assume that $\varphi(K)\cap\T\neq\emptyset$ and $K$ to have non-emtpy interior. Taking into account that $\varphi$ is non-constant, we have that $\varphi(K)$ has non-empty interior and thus, $\linspan\{e_\alpha:\alpha\in K,\, |\varphi(\alpha)|>1\}$ and $\linspan\{e_\alpha:\alpha\in K,\, |\varphi(\alpha)|<1\}$ are dense in  $\Exp(K)$ by Proposition \ref{dicht} (1). Observing that $\varphi(D)e_\alpha=\varphi(\alpha)e_\alpha$, the Godefroy-Shapiro Criterion (see \cite[Corollary 1.10]{baymathbook}) yields $\HC(\varphi(D),\Exp(K))\neq\emptyset$. In order to prove the hypercyclicity for the case that $K$ has empty interior, we show that $\varphi(D)$ is transitive on $\Exp(K)$ (i.e. for every pair of non-empty open sets $U,V\subset \Exp(K)$ there exists a positive integer $k$ such that $T^k(U)\cap V\neq \emptyset$), which is equivalent to the hypercyclicity of $\varphi(D)$ on $\Exp(K)$ (cf. \cite[Theorem 1.2]{baymathbook}).

It is easily seen that for every positive integer $n$ we find some convex, compact set $K\subset L\subset \Omega$ that has non-empty interior such that $H_L(z)+1/(n+1)|z|<H_K(z)+1/n|z|$ implying $||\cdot||_{L,n+1}<||\cdot||_{K,n}$. Consequently, for given non-empty open sets $U,V\subset\Exp(K)$, we may assume the existence of open sets $\tilde{U},\tilde{V}\subset \Exp(L)$ with $U=\tilde{U}\cap \Exp(K)$ and $V=\tilde{V}\cap\Exp(K)$. By the above, $\varphi(D)$ is hypercyclic and hence transitive on $\Exp(K)$. This implies the existence of a positive integer $k$ such that $\tilde{U}\cap T^{-k}(\tilde{V})$ is a non-empty open set in $\Exp(L)$. The denseness of $\Exp(K)$ in $\Exp(L)$, which is for instance a consequence of Proposition \ref{dicht} (1), yields $\Exp(K)\cap\tilde{U}\cap T^{-k}(\tilde{V})=U\cap T^{-k}(\tilde{V})\neq\emptyset$ so that $T^k(U)\cap V\neq \emptyset$.\\ Since $\Exp(K)$ is a Fr\'{e}chet space, $\HC(\varphi(D),\Exp(K))\neq\emptyset$ implies that $\HC(\varphi(D),\Exp(K))$ is a dense $G_\delta$-set in $\Exp(K)$ (see. \cite[Theorem 2.19]{ErdmannPerisBook}). Due to Proposition \ref{gdelta}, we obtain that $\{f\in\Exp(K):K(f)=K\}\cap\HC(\varphi(D),\Exp(K))$ is residual in $\Exp(K)$.
\end{proof}
As an easy consequence of Theorem \ref{hyperexp} we obtain the
\begin{proof}[Proof of Theorem \ref{satzeins}]
As mentioned in the proof of Lemma \ref{gleichint}, $\Exp(K)$ is densely embedded in $H(\C)$ for every non-empty, compact and convex set $K\subset \C$. Now, if $\varphi$ is an entire function of exponential type, we obtain $\HC(\varphi(D),\Exp(K))\subset \HC(\varphi(D), H(\C))$ and see that Theorem \ref{satzeins} is an immediate consequence of Theorem \ref{hyperexp}.
\end{proof}

\section{(Quasi)-Conjugacy of Differential Operators}\label{s3}

Let $T:X\rightarrow X$ and $S:Y\rightarrow Y$ be two continuous operators acting on topological vector spaces $X,Y$. A very useful tool to link the dynamics of such operators is to show that they are (quasi-) conjugated. That means, find a continuous mapping $\Phi:X\rightarrow Y$ having dense range and such that $\Phi\circ T=S\circ\Phi$, that is, the diagram
\[
\begin{xy}
  \xymatrix{
      X\ar[d]^{\Phi} \ar[rr]^{T}   &     &  X \ar[d]^{\Phi}  \\
      Y \ar[rr]_{S}   &     &  Y
  }
\end{xy}
\] 
commutes. Then $S$ is said to be \textbf{quasi-conjugated} to $T$ (by $\Phi$). If $\Phi$ is bijective and $\Phi^{-1}$ is continuous, then $T$ and $S$ are said to be \textbf{conjugated}.

\begin{proposition}\label{conjugation}
If $S$ is quasi-conjugated to $T$ by $\Phi$, then $\Phi(\HC(T,X))\subset \HC(S,Y)$ and $\Phi(\FHC(T,X))\subset \FHC(S,Y)$. 
\end{proposition}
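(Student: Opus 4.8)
The plan is to prove the two inclusions separately, starting from the defining density/denseness properties of the hypercyclic and frequently hypercyclic vectors and transporting them through $\Phi$ using the intertwining relation $\Phi\circ T=S\circ\Phi$ together with the fact that $\Phi$ has dense range.

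First I would treat the hypercyclic case. Let $x\in\HC(T,X)$ and set $y:=\Phi(x)$; the claim is $y\in\HC(S,Y)$. The key algebraic observation, proved by an immediate induction on $n$, is that the intertwining relation iterates to $\Phi\circ T^{n}=S^{n}\circ\Phi$ for every $n\in\N$, so that $S^{n}y=\Phi(T^{n}x)$. Thus the orbit $\{S^{n}y:n\in\N\}$ equals $\Phi(\{T^{n}x:n\in\N\})$, the image under $\Phi$ of a dense subset of $X$. Since $\Phi$ is continuous, the image of a dense set has closure containing $\Phi(\overline{\{T^nx:n\in\N\}})=\Phi(X)$, and since $\Phi$ has dense range, $\overline{\Phi(X)}=Y$. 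Hence $\overline{\{S^ny:n\in\N\}}=Y$, i.e. $y\in\HC(S,Y)$.

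Next I would treat the frequently hypercyclic case, which runs along the same lines but requires one to compare index sets rather than just orbit closures. Let $x\in\FHC(T,X)$ and again put $y=\Phi(x)$. Fix a non-empty open set $V\subset Y$; I must show $\{n:S^ny\in V\}$ has positive lower density. By continuity of $\Phi$ the preimage $U:=\Phi^{-1}(V)$ is open in $X$, and it is non-empty because $\Phi$ has dense range so that $V$ meets $\Phi(X)$. Using $S^ny=\Phi(T^nx)$ we get the inclusion of index sets $\{n:T^nx\in U\}\subset\{n:S^ny\in V\}$, since $T^nx\in U$ forces $\Phi(T^nx)=S^ny\in V$. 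As $x$ is frequently hypercyclic for $T$, the left-hand set has positive lower density, and a superset of a set of positive lower density again has positive lower density (this is immediate from the monotonicity of the counting function in the definition of $\ldens$). Therefore $\{n:S^ny\in V\}$ has positive lower density, and since $V$ was arbitrary, $y\in\FHC(S,Y)$.

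I expect no serious obstacle here: the argument is entirely formal once the iterated intertwining $\Phi\circ T^n=S^n\circ\Phi$ is in hand, and the only points demanding a little care are the two places where dense range of $\Phi$ is used, namely to guarantee $\overline{\Phi(X)}=Y$ in the hypercyclic case and to guarantee that $U=\Phi^{-1}(V)$ is non-empty in the frequently hypercyclic case. The mildest subtlety is the density-of-index-sets step, where one must invoke that passing to a superset cannot decrease the lower density; this follows directly from the definition since the cardinalities $\#\{\lambda\le r\}$ only increase. Both inclusions then follow, establishing the proposition.
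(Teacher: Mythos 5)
Your proof is correct, and it is exactly the standard argument that the paper leaves implicit: the paper gives no proof at all, stating only that the result ``is immediately deduced from the definition of quasi-conjugacy'' with a reference to \cite{ErdmannPerisBook}, where this same reasoning (iterating $\Phi\circ T=S\circ\Phi$ to $\Phi\circ T^n=S^n\circ\Phi$, then using continuity and dense range of $\Phi$, and for the frequent case the index-set inclusion $\{n:T^nx\in\Phi^{-1}(V)\}\subset\{n:S^n\Phi(x)\in V\}$ together with monotonicity of lower density under supersets) appears. All the steps you flag as needing care are handled correctly, so nothing is missing.
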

This result is immediately deduced from the definition of quasi-conjugacy (cf. \cite{ErdmannPerisBook}).\\
In this section, we introduce a transform that quasi-conjugates the operators from Section \ref{abschnittzwei}. Let $K\subset\C$ be a compact and convex set and $\varphi\in H(K)$.
As in the definition of the operators $\varphi(D)$ (cf. \ref{tphidef}),  our starting point is the P\'{o}lya representation. For $f\in\Exp(K)$, we set
\begin{align}\label{defPhi} 
\Phi_\varphi f(z):=\frac{1}{2\pi i}\int_\Gamma \Bo f(\xi)\, e^{\varphi(\xi)z}\, d\xi
\end{align}
where $\Gamma$ is a Cauchy cycle for $K$ in $\Omega_\varphi$. It is clear that this definition is independent of the particular choice of $\Gamma$.

\begin{proposition}\label{densephi}
Let $K$ be a compact, convex subset of $\C$ and $\varphi\in H(K)$ non-constant. Then, for each $f\in \Exp(K)$, the function $\Phi_\varphi f$ defined by (\ref{defPhi}) is an entire function of exponential type with $K(\Phi_\varphi f)\subset\conv(\varphi(K(f)))$. Further 
\[
\Phi_\varphi:\Exp(K)\rightarrow \Exp(\conv(\varphi(K)))
\]
is a continuous operator that has dense range. 
\end{proposition}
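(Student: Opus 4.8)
The plan is to prove the three assertions of Proposition \ref{densephi} in turn: that $\Phi_\varphi f$ is entire of exponential type with the stated conjugate indicator diagram, that $\Phi_\varphi$ maps into $\Exp(\conv(\varphi(K)))$ continuously, and that it has dense range. The guiding principle throughout is the same as in the proof of Proposition \ref{phiop}: push all estimates to the boundary $|\Gamma|$ of a Cauchy cycle hugging $K$ and exploit that $\Bo f$ is uniformly controlled there via the isomorphism $\Bo$ together with the norms $\|\cdot\|_{K,m}$.

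First I would establish the growth bound. Since the integrand $\Bo f(\xi)\,e^{\varphi(\xi)z}$ is entire in $z$ and the integration is over a fixed compact cycle, differentiation under the integral sign shows $\Phi_\varphi f$ is entire. To bound its type and locate $K(\Phi_\varphi f)$, I would choose $\Gamma$ so close to $K(f)$ that $\varphi(|\Gamma|)$ lies within distance $\varepsilon$ of $\varphi(K(f))$; then for $z=re^{i\Theta}$ the factor $|e^{\varphi(\xi)z}|=e^{\Ree(\varphi(\xi)z)}$ is bounded by $e^{H_{\conv(\varphi(K(f)))+\varepsilon\overline{\D}}(z)}$ uniformly in $\xi\in|\Gamma|$. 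Pulling the supremum of $|\Bo f|$ over $|\Gamma|$ outside, the standard $ML$-estimate gives $|\Phi_\varphi f(z)|\le C\,e^{H_{\conv(\varphi(K(f)))}(z)+\varepsilon|z|}$, and letting $\varepsilon\to 0$ (by shrinking $\Gamma$ toward $K(f)$) yields $h_{\Phi_\varphi f}(\Theta)\le H_{\conv(\varphi(K(f)))}(e^{i\Theta})$, i.e. $K(\Phi_\varphi f)\subset\conv(\varphi(K(f)))$. The same computation with $K$ in place of $K(f)$, noting $K(f)\subset K$, proves $\Phi_\varphi f\in\Exp(\conv(\varphi(K)))$, and the continuity estimate $\|\Phi_\varphi f\|_{\conv(\varphi(K)),n}\le C\|f\|_{K,m}$ follows verbatim as in Proposition \ref{phiop} by choosing $\Gamma\subset K+\frac{1}{n}\overline{\D}$ and comparing support functions.

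For the dense range I would argue on the simplest dense family: by Proposition \ref{dicht}(2) the span $\linspan\{e_\alpha:\alpha\in K\}$ (or any sufficiently rich subset) is dense in $\Exp(K)$. Computing $\Phi_\varphi$ on exponentials is where the transform reveals its structure: since $\Bo e_\alpha=1/(\cdot-\alpha)$, the residue theorem gives $\Phi_\varphi e_\alpha(z)=e^{\varphi(\alpha)z}=e_{\varphi(\alpha)}(z)$, so $\Phi_\varphi$ carries the exponential $e_\alpha$ to the exponential $e_{\varphi(\alpha)}$. Thus the image of $\Exp(K)$ under $\Phi_\varphi$ contains $\linspan\{e_{\varphi(\alpha)}:\alpha\in K\}=\linspan\{e_\beta:\beta\in\varphi(K)\}$. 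Because $\varphi$ is non-constant, $\varphi(K)$ is an infinite subset of the target diagram $\conv(\varphi(K))$, and hence by Proposition \ref{dicht}(2) applied in $\Exp(\conv(\varphi(K)))$ this span is dense there. Since $\Phi_\varphi$ is continuous with image containing a dense set, its range is dense, completing the proof.

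The main obstacle I anticipate is the passage to the sharp indicator bound, that is, justifying that shrinking $\Gamma$ toward $K(f)$ makes $\varphi(|\Gamma|)$ approximate $\varphi(K(f))$ well enough that the $\varepsilon$-slack disappears in the limit; this requires continuity of $\varphi$ together with the fact that $K(f)$ is exactly the minimal convex compact set off which $\Bo f$ extends holomorphically, so that $\Gamma$ can indeed be taken arbitrarily close to $K(f)$. The dense-range half is comparatively routine once the identity $\Phi_\varphi e_\alpha=e_{\varphi(\alpha)}$ is in hand; the verification that $\varphi(K)$ has an accumulation point in $\conv(\varphi(K))$, needed to invoke Proposition \ref{dicht}(2), follows immediately from $\varphi$ being non-constant and $K$ having infinitely many points.
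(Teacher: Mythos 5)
Your first two steps coincide with the paper's own proof: the paper likewise deforms $\Gamma$ so that $\varphi(|\Gamma|)\subset\conv(\varphi(K(f)))+\frac{1}{n}\overline{\D}$ (your $\varepsilon$ is its $1/n$), compares support functions to get $\|\Phi_\varphi f\|_{\conv(\varphi(K(f))),n}<\infty$ for all $n$, hence $K(\Phi_\varphi f)\subset\conv(\varphi(K(f)))$, and obtains continuity from the same estimate with $K$ in place of $K(f)$ together with $\sup_{\xi\in|\Gamma|}|\Bo f(\xi)|\leq C\|f\|_{K,m}$. (One cosmetic slip: taking $|\Gamma|\subset K+\frac{1}{n}\overline{\D}$ does not by itself put $\varphi(|\Gamma|)$ in $\conv(\varphi(K))+\frac{1}{n}\overline{\D}$; you need $\Gamma$ close to $K$ at a scale dictated by the uniform continuity of $\varphi$ — your earlier $\varepsilon$-formulation already says this correctly.) The identity $\Phi_\varphi e_\alpha=e_{\varphi(\alpha)}$ is also the paper's starting point for the range.

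The dense-range step, however, has a genuine gap. You apply Proposition \ref{dicht}(2) to $A=\varphi(K)$ and justify that $A$ is infinite ``from $\varphi$ being non-constant and $K$ having infinitely many points'' — but nothing in the hypotheses grants $K$ more than one point, and the singleton case is not a discardable degeneracy: it is exactly the case the paper needs later (Theorem \ref{growthexp} works in $\Exp(\{0\})$, and the theorem in Section 4 showing $\FHC(\varphi(D),\Exp(\{\lambda\}))=\emptyset$ uses dense range of a transform out of $\Exp(\{\lambda\})$). For $K=\{\lambda\}$ your argument only places the single exponential $e_{\varphi(\lambda)}$ in the range, and its span is nowhere dense in the infinite-dimensional space $\Exp(\{\varphi(\lambda)\})$, so the proof collapses precisely where it is later needed. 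The paper circumvents this by a fattening-and-intersection device: choose compact convex sets $K_n\subset\Omega_\varphi$ with $K_n^\circ\supset K_{n+1}$ and $\bigcap_n K_n=K$; each $K_n$ is infinite, so $\linspan\{e_{\varphi(\alpha)}:\alpha\in K_n\}$ is dense in $\Exp(\conv(\varphi(K_n)))$ by Proposition \ref{dicht}(2); since $\Exp(K)$ is dense in $\Exp(K_n)$ (Proposition \ref{dicht}(1)) and $\Phi_\varphi$ is continuous on $\Exp(K_n)$, the set $\Phi_\varphi(\Exp(K))$ is dense in every $\Exp(\conv(\varphi(K_n)))$, and Lemma \ref{schnittexp} — the algebraic and topological identity $\bigcap_n\Exp(\conv(\varphi(K_n)))=\Exp(\conv(\varphi(K)))$ — transfers this density to the actual target. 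Your route is correct, and indeed shorter than the paper's, whenever $K$ contains at least two points; to cover the general statement you need the enlargement argument (or some other mechanism producing non-exponential functions in the closure of the range).
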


\begin{lemma}\label{schnittexp}
Let $K\subset \C$ be a compact, convex set and $(K_n)$ a sequence of compact, convex supersets of $K$ such that $K_n^\circ\supset K_{n+1}$ and $\bigcap_{n\in\N}K_n=K$.
Then $\Exp(K)=\bigcap_{n\in\N}\Exp(K_n)$ in algebraic and topological sense.
\end{lemma}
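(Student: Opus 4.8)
The plan is to treat the algebraic and the topological identification separately; the former is routine, and the latter will reduce to a uniform convergence statement for support functions, which I expect to be the crux.

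For the algebraic equality I would use that $f\in\Exp(L)$ means precisely $K(f)\subset L$. Since $K\subset K_n$ for every $n$, we get $\Exp(K)\subset\bigcap_n\Exp(K_n)$ at once. Conversely, if $f\in\bigcap_n\Exp(K_n)$, then $K(f)\subset K_n$ for all $n$, whence $K(f)\subset\bigcap_n K_n=K$ and $f\in\Exp(K)$. This gives $\Exp(K)=\bigcap_n\Exp(K_n)$ as sets.

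For the topological part I would regard $\bigcap_n\Exp(K_n)$ as carrying the projective limit topology, i.e.\ the topology generated by the whole family of norms $\{\|\cdot\|_{K_n,m}:n,m\in\N\}$ (the inclusions $\Exp(K_{n+1})\hookrightarrow\Exp(K_n)$ being continuous because $H_{K_{n+1}}\le H_{K_n}$). One inclusion of topologies is immediate: since $K\subset K_n$ forces $H_K\le H_{K_n}$ and hence $\|\cdot\|_{K_n,m}\le\|\cdot\|_{K,m}$ for all $n,m$, every projective-limit norm is dominated by a norm of $\Exp(K)$, so the $\Exp(K)$-topology is at least as fine as the projective limit topology. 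The content of the lemma is the reverse inclusion: for each $m$ I must produce an index $n$ with $\|\cdot\|_{K,m}\le\|\cdot\|_{K_n,m+1}$. Writing $H_L(z)=|z|\,h_L(\arg z)$ with $h_L(\theta)=\sup_{u\in L}\Ree(e^{i\theta}u)$, the desired estimate follows once $h_{K_n}(\theta)-h_K(\theta)\le \tfrac1m-\tfrac1{m+1}$ holds for all $\theta$, because then $H_{K_n}(z)-H_K(z)\le(\tfrac1m-\tfrac1{m+1})|z|$ makes the pointwise ratio of the two exponential weights $\le 1$.

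Thus everything comes down to the uniform convergence $h_{K_n}\to h_K$ on the unit circle, which I see as the main obstacle. I would first establish pointwise convergence: the sequence $h_{K_n}(\theta)$ is nonincreasing in $n$ and bounded below by $h_K(\theta)$; choosing maximizers $u_n\in K_n$ for $h_{K_n}(\theta)$ (the supremum being attained by compactness) and extracting a subsequential limit $u^*$, the nesting of the compacta $K_n$ forces $u^*\in\bigcap_n K_n=K$, so $\lim_n h_{K_n}(\theta)=\Ree(e^{i\theta}u^*)\le h_K(\theta)$, hence equality. Since each $h_{K_n}$ and $h_K$ are continuous on the compact unit circle and the convergence is monotone, Dini's theorem upgrades this to uniform convergence. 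Then for a given $m$ I pick $n$ with $\sup_\theta(h_{K_n}(\theta)-h_K(\theta))<\tfrac1m-\tfrac1{m+1}$, obtaining $\|\cdot\|_{K,m}\le\|\cdot\|_{K_n,m+1}$ and thereby the reverse topological inclusion. Combining the two inclusions yields equality of the topologies, completing the proof. (Note that beyond the nesting $K_{n+1}\subset K_n$ and $\bigcap_n K_n=K$, the strengthened hypothesis $K_n^\circ\supset K_{n+1}$ is not actually needed for this argument.)
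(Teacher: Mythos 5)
Your proposal is correct and follows essentially the same route as the paper: the paper's (very terse) proof consists of the remark that the algebraic equality is clear and that the topological one follows from the ``observation'' that for each $l$ there are $n,j$ with $H_{K_n}(z)+\tfrac{1}{j}|z|\le H_K(z)+\tfrac{1}{l}|z|$ for all $z$, which is precisely the weight comparison you establish in detail via monotone (Dini) uniform convergence of the support functions on the unit circle. Your closing remark is also accurate: the argument only uses the nesting $K_{n+1}\subset K_n$ together with $\bigcap_n K_n=K$, not the stronger hypothesis $K_{n+1}\subset K_n^\circ$.
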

\begin{proof}
The equality in algebraic sense is clear. That the spaces also coincide in topological sense is an immediate consequenc of the observation that for a given $l\in N$,
we have $H_{K_n,j}\leq H_{K,l}$ for a suitable choice of $n,j\in N$.
\end{proof}

\begin{proof}[Proof of Proposition \ref{densephi}]
One immediately verifies that $\Phi_\varphi f$ is an entire function. We fix some positive integer $n$ and choose $\Gamma$ such that $\varphi(|\Gamma|)$ is contained in  $\conv(\varphi(K(f)))+\frac{1}{n}\overline{\D}$. Then 
\[H_{\conv(\varphi(|\Gamma)|)}(z) \leq H_{\conv(\varphi(K(f)))+\frac{1}{n}\overline{\D}}(z)= H_{\conv(\varphi(K(f)))}(z)+\frac{1}{n}|z|
\]and thus
\begin{align}\label{phiabsch}
||\Phi_\varphi f||_{\conv(\varphi(K(f))),n}&=\sup\limits_{z\in \C}\left|\frac{1}{2\pi i}\int_\Gamma \Bo f(\xi)\, e^{\varphi(\xi)z} d\xi\right|\, e^{-H_{\conv(\varphi(K(f)))}(z)-\frac{1}{n}|z|}\notag \\[2mm]
&\leq \frac{\len(\Gamma)}{2\pi}\,\sup\limits_{\xi\in |\Gamma|} |\Bo f(\xi)| e^{H_{\conv(\varphi(|\Gamma|))}(z)}\,e^{-H_{\conv(\varphi(K(f)))}(z)-\frac{1}{n}|z|}\\[2mm]
&\leq \frac{\len(\Gamma)}{2\pi}\,\sup\limits_{\xi\in |\Gamma|} |\Bo f(\xi)|<\infty.\notag
\end{align}
As $n$ was arbitrary, this yields that $K(\Phi_\varphi f)$ is contained in $\conv(\varphi(K(f)))$, which in particular implies that $\Phi_\varphi f$ is of exponential type and $\Phi f\in \Exp(\conv(\varphi(K)))$.\\
We proceed with the second assertion.
Taking into account that for some $C<\infty$ and $m\in \N$ we have $\sup_{\xi\in |\Gamma|} |\Bo f(\xi)|\leq  C\,||f||_{K,m}$ due to the fact that $\Bo:\Exp(K)\rightarrow H(\C_\infty\setminus K)$ is an isomorphism, the continuity of $\Phi_\varphi$ follows from (\ref{phiabsch}) when $K(f)$ is replaced by $K$. \\
It remains to show that $\Phi_\varphi(\Exp(K))$ is dense in $\Exp(\conv(\varphi(K)))$. 
Therefore, let $K_1,\, K_2, ...$ be a sequence of compact, convex sets in $\Omega_\varphi$ such that $K_n^\circ\supset K_{n+1}$ and the intersection of all these sets is equal to $K$. As already noted above, the Borel transform of $e_\alpha$ is given by $\xi \mapsto 1/(\xi-\alpha)$. Inserting this in (\ref{defPhi}), the Cauchy integral formula yields $\Phi_\varphi(e_\alpha)=e_{\varphi(\alpha)}$ for all $\alpha$ in some $K_n$. Consequently, for arbitrary $n\in\N$ 
\[
\Phi_\varphi(\linspan\{e_\alpha:\alpha\in K_n\})=\linspan\{e_{\varphi(\alpha)}:\alpha\in K_n\}\subset \Exp(\conv(\varphi(K_n)))
\] 
which implies that $\Phi_\varphi:\Exp(K_n)\rightarrow \Exp(\conv(\varphi(K_n)))$ has dense range according to Proposition \ref{dicht}(2) and the fact that $\varphi$ is non-constant.
Since $\Exp(K)$ is dense in $\Exp(K_n)$, we obtain that $\Phi_\varphi(\Exp(K))$ lies densely in $\Exp(\conv(\varphi(K_n)))$.
Furthermore, we have 
\[
\bigcap\limits_{n\in\N}\conv(\varphi(K_n))=\conv(\varphi(K)) 
\]
and hence
\[\bigcap\limits_{n\in\N}\Exp(\conv(\varphi(K_n)))=\Exp(\conv(\varphi(K))).
\]
in algebraic and topological sense by Lemma \ref{schnittexp}. It is now obvious that $\Phi_\varphi(\Exp(K))$ is dense in $\Exp(\conv(\varphi(K)))$.
\end{proof}

In the formulation of Theorem \ref{densephi}, it is necessary to form the convex hull in the image space $\Exp(\conv(\varphi(K)))$, since $\Exp(K)$ is only defined for convex sets $K$. However, we show that the Borel transform of $\Phi_{\varphi}f$ actually admits an analytic continuation beyond $\C_\infty \setminus \conv(\varphi(K))$.
For that purpose, we have to introduce a further notation: For a compact set $K\subset \C$, the polynomially convex hull $\widehat{K}$ is defined as the union of $K$ with the bounded components of its complement.  
Let $K\subset \C$ be a compact, convex set, $f\in \Exp(K)$ and $\varphi\in H(K)$. For $w \in \C \setminus \widehat{\varphi(K)}$ we set
\[
H_\varphi(w):= \frac{1}{2\pi i} \int_{\Gamma} \frac{\Bo f(\xi)}{w-\varphi(\xi)}\,d\xi
\]
with $\Gamma$ a Cauchy cycle for $K\in\Omega_\varphi$ being so near to $K$ that $\varphi(|\Gamma|)$ is contained in a simply connected, compact set $L\supset \widehat{\varphi(K)}$ such that $w\in \C\setminus L$. This definition is independent of the particular choice of $\Gamma$. Since $\varphi(|\Gamma|)$ can be arbitrarily near to $\varphi(K)$, we obtain a function $H_\varphi\in H(\C_\infty\setminus \widehat{\varphi(K)})$.

\begin{proposition}\label{continuation}
The function $H_\varphi \in H(\C_\infty\setminus \widehat{\varphi(K)})$ defines an analytic continuation of $\Bo(\Phi_\varphi) \in H(\C_\infty\setminus\conv(\varphi(K)))$.
\end{proposition}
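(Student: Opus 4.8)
The plan is to compute the Borel transform $\Bo(\Phi_\varphi f)$ explicitly from its defining power series at infinity, recognize the outcome as $H_\varphi$ on a neighbourhood of $\infty$, and then let the identity theorem propagate this agreement across the whole common domain. Recall that for an entire function $g$ of exponential type one has $\Bo g(w)=\sum_{n=0}^\infty g^{(n)}(0)\,w^{-(n+1)}$. Applying this to $g=\Phi_\varphi f$ and differentiating the parameter integral (\ref{defPhi}) under the integral sign (legitimate since the integrand is smooth and $\Gamma$ is a compact cycle) gives $g^{(n)}(0)=\frac{1}{2\pi i}\int_\Gamma \Bo f(\xi)\,\varphi(\xi)^n\,d\xi$, whence
\[
\Bo(\Phi_\varphi f)(w)=\sum_{n=0}^\infty \frac{1}{w^{n+1}}\,\frac{1}{2\pi i}\int_\Gamma \Bo f(\xi)\,\varphi(\xi)^n\,d\xi .
\]

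Next I would interchange summation and integration. Since $|\Gamma|$ is compact and $\varphi$ continuous, there is an $R>0$ with $|\varphi(\xi)|\le R$ on $|\Gamma|$, and for $|w|>R$ the geometric series $\sum_{n\ge 0}\varphi(\xi)^n w^{-(n+1)}$ converges uniformly in $\xi\in|\Gamma|$ to $(w-\varphi(\xi))^{-1}$. This uniform convergence licenses the termwise integration and yields
\[
\Bo(\Phi_\varphi f)(w)=\frac{1}{2\pi i}\int_\Gamma \frac{\Bo f(\xi)}{w-\varphi(\xi)}\,d\xi = H_\varphi(w)
\]
for every $w$ in a neighbourhood of $\infty$. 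To conclude, I would invoke the identity theorem: because $\widehat{\varphi(K)}\subset\conv(\varphi(K))$, the set $\C_\infty\setminus\conv(\varphi(K))$ is an open, connected subset of $\C_\infty\setminus\widehat{\varphi(K)}$ containing $\infty$. On it $\Bo(\Phi_\varphi f)$ is holomorphic by Proposition \ref{densephi} and $H_\varphi$ is holomorphic by construction, and the two agree on the neighbourhood of $\infty$ just found; hence they coincide throughout $\C_\infty\setminus\conv(\varphi(K))$. Since $H_\varphi$ is in fact holomorphic on the larger set $\C_\infty\setminus\widehat{\varphi(K)}$, it provides the asserted analytic continuation of $\Bo(\Phi_\varphi f)$.

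The only genuinely delicate point is the justification of the termwise integration, which is precisely why the argument first establishes the identity $\Bo(\Phi_\varphi f)=H_\varphi$ only near $\infty$, where the geometric series converges uniformly on $|\Gamma|$, and then relies on the connectedness of $\C_\infty\setminus\conv(\varphi(K))$ to spread the equality over the entire overlap by analytic continuation. Everything else is a routine computation with the Borel transform and the P\'olya representation.
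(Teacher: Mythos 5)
Your proof is correct, but it takes a genuinely different route from the paper. You expand $\Bo(\Phi_\varphi f)$ as its Laurent series at infinity, identify the coefficients via (\ref{differentphi}) at $z=0$, sum the geometric series $\sum_{n\ge 0}\varphi(\xi)^n w^{-(n+1)}=(w-\varphi(\xi))^{-1}$ uniformly on $|\Gamma|$ for $|w|>\max_{|\Gamma|}|\varphi|$, and then spread the identity $\Bo(\Phi_\varphi f)=H_\varphi$ from a neighbourhood of $\infty$ over the connected set $\C_\infty\setminus\conv(\varphi(K))$ by the identity theorem. The paper argues in the opposite direction: it feeds $H_\varphi$ into the P\'olya representation over a Cauchy cycle $\Gamma_0$ for $\conv(\varphi(K))$, interchanges the two contour integrals, and evaluates the inner integral $\frac{1}{2\pi i}\int_{\Gamma_0} e^{wz}(w-\varphi(\xi))^{-1}\,dw = e^{\varphi(\xi)z}$ by the Cauchy integral formula (choosing $\Gamma$ so that $\ind_{\Gamma_0}(\varphi(\xi))=1$ for $\xi\in|\Gamma|$), recovering $\Phi_\varphi f(z)$; since $\Bo_{\conv(\varphi(K))}$ is an isomorphism, injectivity of its inverse forces $H_\varphi|_{\C_\infty\setminus\conv(\varphi(K))}=\Bo(\Phi_\varphi f)|_{\C_\infty\setminus\conv(\varphi(K))}$ on the whole domain at once. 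Your version is more elementary and self-contained: it needs only the series definition of the Borel transform, a uniform geometric series estimate, and uniqueness of analytic continuation, though it does require the small supporting observations you correctly supply, namely $\widehat{\varphi(K)}\subset\conv(\varphi(K))$ and connectedness of the complement of a compact convex set in $\C_\infty$. The paper's version avoids any series manipulation and stays within the operator-theoretic framework (Borel isomorphism plus double-contour Fubini arguments) that it reuses almost verbatim in the proof of Proposition \ref{phiconj}(3), which is presumably why that route was chosen. Both proofs are complete; yours trades structural machinery for a local computation plus analytic continuation.
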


\begin{proof} Let $\Gamma_0$ be a Cauchy cycle for $\conv(\varphi(K))$ in $\C$. Then we can chose a Cauchy cycle $\Gamma$ for $K$ in $\Omega_\varphi$ being so near to $K$ that 
$\ind_{\Gamma_0}(\varphi(u))=1$ for all $u\in|\Gamma|$. Then
\begin{align*}
\frac{1}{2\pi i} \int_{\Gamma_{0}} H_\varphi(w)  \,e^{w z}\,dw &=\frac{1}{2\pi i} \int_{\Gamma}\Bo f(\xi)\,\frac{1}{2\pi i} \int_{\Gamma_{0}}\frac{e^{w z}}{w-\varphi(\xi)}\, dw \,d\xi\\[2mm]
&=\frac{1}{2\pi i} \int_{\Gamma}\Bo f(\xi)\,e^{\varphi(\xi)z}\,d\xi\\[2mm]
&=\Phi_\varphi f(z)
\end{align*}
by the Cauchy integral formula. Considering that $\Bo_{\conv(\varphi(K))}$ is an isomorphism, we can conclude $H_\varphi|_{\C_\infty\setminus\conv(\varphi(K))}=\Bo(\Phi_\varphi)|_{\C_\infty\setminus\conv(\varphi(K))}$.  
\end{proof}

Now, let $f$ be an entire function of exponential type and $\varphi\in H(K(f))$. Interchanging integration and differentiation yields
\begin{align}\label{differentphi}
(\Phi_\varphi f)^{(n)}(z)=\frac{1}{2\pi i}\int_\Gamma \Bo f(\xi)\,\varphi^n (\xi)\, e^{\varphi(\xi)z}\, d\xi,
\end{align}
which implies that the Taylor expansion of $\Phi_\varphi f$ at the origin is given by
\begin{align}\label{phitaylorinterpolation}
\Phi_\varphi f(z)=\sum\limits_{n=0}^\infty \frac{\varphi(D)^n f(0)}{n!}\, z^n.
\end{align}

Further, in accordance with our conventions, if  $\varphi\in H(K(f))$ is zero-free, $\Omega_\varphi$ is a simply connected domain that contains no zeros of $\varphi$. These conditions ensure the existence of a logarithm function $\lphi\in H(\Omega_\varphi)$ for $\varphi$.  
Then for each non-negative integer $n$, we have
\begin{align}\label{translatephi}
e_1(D)^n\Phi_{\lphi} f(0)&=\Phi_{\lphi} f(n)=\frac{1}{2\pi i}\int_\Gamma \Bo f(\xi)\, e^{n\,\lphi(\xi)} d\xi\notag\\
&=\frac{1}{2\pi i}\int_\Gamma \Bo f(\xi)\, \varphi^n(\xi)\, d\xi=\varphi(D)^n f(0).
\end{align}

We extend (\ref{differentphi}) and (\ref{translatephi}) by showing that $\Phi_\varphi$ commutes with differential operators on $\Exp(K)$. For that purpose, we have to introduce another terminology: A germ $\varphi\in H(K)$ is said to be biholomorphic, if $\Omega_\varphi$ can be choosen so that $\varphi:\Omega_\varphi\rightarrow \varphi(\Omega_\varphi)$ is biholomorphic. In this case, we always assume $\Omega_\varphi$ to be so small that the above property is ensured.

\begin{proposition}\label{phiconj}
Let $K$ be a compact, convex subset of $\C$ and let $\varphi\in H(K)$.  
\begin{enumerate}
\item[(1)]  $D:\Exp(\conv(\varphi(K)))\rightarrow\Exp(\conv(\varphi(K)))$ is quasi conjugated to $\varphi(D):\Exp(K)\rightarrow \Exp(K)$ by $\Phi_\varphi$;
\item[(2)] If $\varphi$ is zero-free then $e_1(D):\Exp(\conv(\log \varphi(K)))\rightarrow \Exp(\conv(\log \varphi(K)))$ is quasi conjugated to $\varphi(D):\Exp(K)\rightarrow \Exp(K)$ by $\Phi_{\log \varphi}$;
\item[(3)] If $C$ is a compact, convex subset of $\C$ and $\psi\in H(C)$ is biholomorphic and satisfies $\psi(C)\supset \varphi(K)$ then $\psi(D):\Exp(\conv(\psi^{-1}\circ\varphi(K)))\rightarrow \Exp(\conv(\psi^{-1}\circ\varphi(K)))$ is quasi conjugated to $\varphi(D):\Exp(K)\rightarrow \Exp(K)$ by $\Phi_{\psi^{-1}\circ\varphi}$.
\end{enumerate}
\end{proposition}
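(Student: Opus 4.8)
The plan is to verify quasi-conjugacy in each of the three cases by establishing only the intertwining relation, since continuity and density of the range of the relevant transform ($\Phi_\varphi$, $\Phi_{\lphi}$, resp.\ $\Phi_{\psi^{-1}\circ\varphi}$) are already supplied by Proposition \ref{densephi}. In every case the decisive tool is Lemma \ref{gleichint}: for fixed $z$ the function $\xi\mapsto e^{\chi(\xi)z}$ lies in $H(\Omega_\chi)$ (with $\chi$ the germ at hand), so that $\int_\Gamma \Bo f(\xi)\,\varphi(\xi)\,e^{\chi(\xi)z}\,d\xi=\int_\Gamma \Bo(\varphi(D)f)(\xi)\,e^{\chi(\xi)z}\,d\xi$. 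Thus the whole task reduces to producing the factor $\varphi(\xi)$ inside the integral defining the image, and then reading off $\Phi_\chi(\varphi(D)f)$.

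For part (1) I would start from the differentiation formula (\ref{differentphi}) with $n=1$, which gives $D\Phi_\varphi f(z)=\frac{1}{2\pi i}\int_\Gamma \Bo f(\xi)\,\varphi(\xi)\,e^{\varphi(\xi)z}\,d\xi$, and then apply Lemma \ref{gleichint} with $h=e^{\varphi(\cdot)z}$ to rewrite this as $\Phi_\varphi(\varphi(D)f)(z)$; hence $\Phi_\varphi\circ\varphi(D)=D\circ\Phi_\varphi$. For part (2) I would use that $e_1(D)$ is translation by $1$, so $e_1(D)\Phi_{\lphi}f(z)=\Phi_{\lphi}f(z+1)$, and observe that the extra factor $e^{\lphi(\xi)}=\varphi(\xi)$ produced by the shift turns the integrand into $\Bo f(\xi)\,\varphi(\xi)\,e^{\lphi(\xi)z}$; Lemma \ref{gleichint} with $h=e^{\lphi(\cdot)z}$ then identifies this with $\Phi_{\lphi}(\varphi(D)f)(z)$, so $\Phi_{\lphi}\circ\varphi(D)=e_1(D)\circ\Phi_{\lphi}$. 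Both of these are in fact the special cases $\psi=\id$ and $\psi=\exp$ of part (3).

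Part (3) is the main obstacle and is where Proposition \ref{continuation} enters. Writing $\eta:=\psi^{-1}\circ\varphi$ and $g:=\Phi_\eta f\in\Exp(\conv(\eta(K)))$, I would first record that $\eta(K)=\psi^{-1}(\varphi(K))\subset C$, whence $\conv(\eta(K))\subset C\subset\Omega_\psi$, so $\psi\in H(\conv(\eta(K)))$ and $\psi(D)g$ is well defined by (\ref{tphidef}). The idea is to compute $\psi(D)g$ by replacing $\Bo g$ with its analytic continuation $H_\eta$ from Proposition \ref{continuation}: choosing a Cauchy cycle $\Gamma'$ for $\conv(\eta(K))$ in $\Omega_\psi$ and a Cauchy cycle $\Gamma$ for $K$ so close to $K$ that $\eta(|\Gamma|)$ is encircled once by $\Gamma'$, one obtains
\[
\psi(D)g(z)=\frac{1}{2\pi i}\int_{\Gamma'}H_\eta(w)\,\psi(w)\,e^{wz}\,dw=\frac{1}{2\pi i}\int_\Gamma \Bo f(\xi)\,\Bigl(\frac{1}{2\pi i}\int_{\Gamma'}\frac{\psi(w)\,e^{wz}}{w-\eta(\xi)}\,dw\Bigr)\,d\xi ,
\]
after substituting $H_\eta(w)=\frac{1}{2\pi i}\int_\Gamma \Bo f(\xi)/(w-\eta(\xi))\,d\xi$ and interchanging the two compact-curve integrations by Fubini. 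Evaluating the inner integral by the Cauchy integral formula, exactly as in the proof of Proposition \ref{continuation} but now carrying the holomorphic factor $\psi(w)$, yields $\psi(\eta(\xi))\,e^{\eta(\xi)z}=\varphi(\xi)\,e^{\eta(\xi)z}$ because $\psi\circ\psi^{-1}=\id$. Hence $\psi(D)g(z)=\frac{1}{2\pi i}\int_\Gamma \Bo f(\xi)\,\varphi(\xi)\,e^{\eta(\xi)z}\,d\xi$, and a final application of Lemma \ref{gleichint} with $h=e^{\eta(\cdot)z}$ converts this into $\Phi_\eta(\varphi(D)f)(z)$, giving $\Phi_\eta\circ\varphi(D)=\psi(D)\circ\Phi_\eta$.

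The delicate point throughout part (3) is the bookkeeping of the nested contours: one must take $\Gamma$ near enough to $K$ that $\eta(|\Gamma|)$ is wound around exactly once by $\Gamma'$, keep $\psi$ holomorphic on the region enclosed by $\Gamma'$ (guaranteed by $\conv(\eta(K))\subset\Omega_\psi$), and make sure that $\Bo g$ coincides with $H_\eta$ on $\Gamma'$ (Proposition \ref{continuation}). Once these nesting conditions are arranged, the continuity of the integrand on $|\Gamma|\times|\Gamma'|$ makes the Fubini interchange and the Cauchy evaluation routine.
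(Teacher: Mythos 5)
Your proposal is correct and follows essentially the same route as the paper: your parts (2) and (3) reproduce the paper's argument exactly (translation by $1$ plus Lemma \ref{gleichint}; Proposition \ref{continuation}, Fubini over the nested contours, Cauchy's formula carrying the factor $\psi(w)$ to produce $\varphi(\xi)e^{\psi^{-1}\circ\varphi(\xi)z}$, then Lemma \ref{gleichint}), while your part (1), using (\ref{differentphi}) with $n=1$ together with Lemma \ref{gleichint}, is an equivalent micro-variant of the paper's appeal to the Taylor expansion (\ref{phitaylorinterpolation}). One harmless inaccuracy: your aside that (2) is the special case $\psi=\exp$ of (3) is not literally valid, since $\exp$ need not be biholomorphic on a neighbourhood of $\conv(\lphi(K))$ (e.g.\ when its imaginary extent reaches $2\pi$), but your direct argument for (2) does not depend on that remark.
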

\begin{proof}
Let $f\in\Exp(K)$.
In order to see (1), consider the Taylor expansion for $\Phi_\varphi f$ in (\ref{phitaylorinterpolation}) and observe that \[D\left(\sum\limits_{n=0}^\infty \frac{\varphi(D)^n f(0)}{n!}\, z^n\right)=\sum\limits_{n=0}^\infty \frac{\varphi(D)^{n+1} f(0)}{n!}\, z^n.\] \\
Considering Lemma \ref{gleichint}, we obtain
\begin{align*}
e_1(D)^n\Phi_{\lphi} f(z)&=\frac{1}{2\pi i}\int_\Gamma \Bo f(\xi)\,\,e^{(z+n) \lphi(\xi) } d\xi\\[2mm]
&=\frac{1}{2\pi i}\int_\Gamma \Bo f(\xi)\, \varphi^n(\xi)\,e^{z \lphi(\xi) } d\xi\\[2mm]
&=\frac{1}{2\pi i}\int_\Gamma \Bo \varphi(D)^n f(\xi)\,e^{z\lphi(\xi) } d\xi \\[2mm]
&=\Phi_{\lphi} \varphi(D)^n f(z).
\end{align*}
With $n=1$, this is the assertion in (2). \\
In order to show (3), we consider an arbitrary $z\in \C\setminus C$ and choose a Cauchy cycle $\Gamma_1$ for $K$ in $\Omega_\varphi$ such that $\varphi(|\Gamma_1|)\subset \Omega_{\psi^{-1}}$ and $\psi^{-1}\circ\varphi(|\Gamma_1|)$ is contained in some compact set $L\subset \Omega_\psi$ with $z\in \C\setminus L$. Further, let $\Gamma_2$ be a Cauchy cycle for $L$ in $\Omega_\psi$. Then, according to Proposition \ref{continuation}, we have
\begin{align*}
\psi(D)(\Phi_{\psi^{-1}\circ\varphi}f)(z)&=\frac{1}{2\pi i}\int_{\Gamma_2} \Bo(\Phi_{\psi^{-1}\circ\varphi}f)(w)\, e^{w z}\,dw\\[2mm]
&=\frac{1}{2\pi i}\int_{\Gamma_2} \frac{1}{2\pi i}\int_{\Gamma_1}\frac{\Bo f(\xi)}{w-\psi^{-1}\circ\varphi(\xi)}\,d\xi\, e^{w z}\,dw\\[2mm]
&=\frac{1}{2\pi i}\int_{\Gamma_1} \Bo f(\xi)\,\frac{1}{2\pi i}\int_{\Gamma_2}\frac{\psi(w)\, e^{w z}}{w-\psi^{-1}\circ\varphi(\xi)}\,dw\,\,d\xi\\[2mm]
&=\frac{1}{2\pi i}\int_{\Gamma_1}\Bo f(\xi)\,\varphi(\xi)\, e^{\psi^{-1}\circ\varphi(\xi)z}\,d\xi\\[2mm]
&=\Phi_{\psi^{-1}\circ\varphi}(\varphi(D)f)(z).
\end{align*}
\end{proof}
\begin{remark}
If $\Exp(K)$ is endowed with the relative topology of $H(\C)$, the transform $\Phi_\varphi$ is no longer continuous. Thus, the quasi-conjugacy in Proposition \ref{phiconj} is intimately linked with the topology of $\Exp(K)$.
\end{remark}
As a first application of the introduced transform, we extend the result of Duyos-Ruiz mentioned in the introduction. For that purpose, a further fact has to be used:

In \cite{hilbertentire}, K. C. Chan and J. H. Shapiro strengthened the result of Duyos-Ruiz. Here, growth of entire functions is measured with respect to a so called admissible comparison function, i.e. an entire function $a(z)=\sum_{n=0}^\infty  a_n z^n$ such that $a_n>0$, $a_{n+1}/a_n\rightarrow 0$ as $n\rightarrow \infty$ and $(n+1)\, a_{n+1}/a_n$ is monotonically decreasing. Corresponding to a comparison function $a$, Chan and Shapiro consider 
\[
E^2(a):=\left\{f\in H(\C): ||f||_a^2:=\sum\limits_{n=0}^\infty \frac{|f^{(n)}(0)/n!|^2}{a_n^2} <\infty\right\},
\]
which is a Hilbert space of entire functions. They prove that the translation $e_\alpha(D)$ is hypercyclic on $E^2(a)$ for every admissible comparison function $a$ and every $\alpha\in\C\setminus\{0\}$ (see \cite[Theorem 2.1]{hilbertentire}). In \cite{hilbertentire}, it is also shown that $f\in E^2(a)$ implies $M_f(r)=O(a(r))$.  From the corollary of \cite[Theorem 2.1]{hilbertentire} we can deduce the following:

\begin{xtheorem}[Duyos-Ruiz - Chan and Shapiro]
For every admissible comparison function $a$ and every $\alpha\in\C\setminus\{0\}$, there is an $f\in \HC(e_1(D),\Exp(\{0\}))$ such that $M_f(r)=O(a(r))$.
\end{xtheorem}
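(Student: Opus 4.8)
The plan is to realise the inclusion $E^2(a)\hookrightarrow\Exp(\{0\})$ as a quasi-conjugacy and to transport a hypercyclic vector supplied by Chan and Shapiro from the Hilbert space to $\Exp(\{0\})$. Since $e_1(D)$ is simply the translation $f\mapsto f(\cdot+1)$ on either space, the inclusion map automatically intertwines the two copies of the operator, so the only points requiring work are (i) that the inclusion is continuous with dense range, and (ii) that $E^2(a)$ actually sits inside $\Exp(\{0\})$, which forces a preliminary reduction on $a$.

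First I would reduce to the case that $a$ has exponential type zero. Writing $b_k:=(k+1)a_{k+1}/a_k$, the monotonicity of $(b_k)$ gives $b_k\ge L:=\lim_k b_k$, whence $a_n\ge a_0 L^n/n!$ and therefore $a(r)\ge a_0 e^{Lr}$ for all $r\ge 0$. If $L>0$, then any fixed admissible comparison function $a^*$ of exponential type zero (for instance $a^*_n=1/(n!)^2$) satisfies $a^*(r)=O(e^{Lr})=O(a(r))$; hence it suffices to prove the statement with $a$ replaced by $a^*$, and I may assume from now on that $a$ has exponential type zero, i.e. $\log a(r)/r\to 0$.

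Next I would check that the inclusion $\iota:E^2(a)\to\Exp(\{0\})$ is a well-defined continuous map with dense range. For $f=\sum_m c_m z^m\in E^2(a)$ with $c_m=f^{(m)}(0)/m!$, the Cauchy--Schwarz inequality together with $\sum_m a_m^2 r^{2m}\le a(r)^2$ yields the pointwise bound $M_f(r)\le ||f||_a\,a(r)$. Because $a$ now has type zero, $C_n:=\sup_{r\ge0}a(r)e^{-r/n}<\infty$ for every $n$, so that $||f||_{\{0\},n}=\sup_{r\ge0}M_f(r)e^{-r/n}\le C_n||f||_a$; this shows $\iota$ is well defined and continuous (and the same estimate records $M_f(r)=O(a(r))$ for every $f\in E^2(a)$). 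Density of the range follows since $E^2(a)$ contains all polynomials, which are dense in $\Exp(\{0\})$ by Proposition \ref{dicht}(1) applied with $K=\{0\}$ and $\alpha=0$.

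Finally I would conclude. By Proposition \ref{phiop}, $e_1(D)$ is a continuous operator on $\Exp(\{0\})$, and $\iota\circ e_1(D)=e_1(D)\circ\iota$ holds trivially, so $e_1(D)$ on $\Exp(\{0\})$ is quasi-conjugated to $e_1(D)$ on $E^2(a)$ by $\iota$; Proposition \ref{conjugation} then gives $\iota\bigl(\HC(e_1(D),E^2(a))\bigr)\subset\HC(e_1(D),\Exp(\{0\}))$. By the Chan--Shapiro theorem (their Theorem 2.1 with $\alpha=1$), $e_1(D)$ is hypercyclic on $E^2(a)$, so $\HC(e_1(D),E^2(a))\neq\emptyset$; any $f$ in this set then lies in $\HC(e_1(D),\Exp(\{0\}))$ and satisfies $M_f(r)=O(a(r))$. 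I expect the main obstacle to be step (ii): guaranteeing $E^2(a)\subset\Exp(\{0\})$ with a continuous inclusion is exactly what fails for comparison functions of positive type, and is the reason the type-zero reduction must be carried out first.
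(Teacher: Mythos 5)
Your proof is correct and is essentially the deduction the paper intends: the paper states this theorem without proof, referring to Chan--Shapiro's Theorem 2.1 and its corollary, and your argument --- reading the inclusion $E^2(a)\hookrightarrow\Exp(\{0\})$ as a continuous, dense-range intertwiner and invoking Proposition \ref{conjugation} to transport the Hilbert-space hypercyclic vector --- is precisely the missing link, since hypercyclicity in the $\Exp(\{0\})$ topology (not just in $H(\C)$) is what is claimed. Your preliminary reduction to comparison functions of exponential type zero matches the normalisation the paper itself makes silently (``Without loss of generality $a\in\Exp(\{0\})$'' in the proof of Theorem \ref{growthexp}); the only line you leave implicit is that $L=0$ already forces $a$ to have type zero, which follows at once from $(n+1)a_{n+1}/a_n\downarrow 0$ and a ratio-test estimate on $a_n$.
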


By means of the transfrom $\Phi_\varphi$, we show that this result extends to the operators $\varphi(D)$ as follows:

\begin{theorem}\label{growthexp}
Let $K\subset \C$  be a compact, convex set and $\varphi\in H(K)$ non-constant. Then, for every $\alpha\in  K$ such that $|\varphi(\alpha)|=1$, $\varphi'(\alpha)\neq 0$ and every admissible comparison function $a$, there is some $f_0\in \Exp(\{0\})$ that satisfies $M_{f_0}(r)=O(a(r))$ and such that $f=f_0e_\alpha\in \HC(\varphi(D),\Exp(K))$.
\end{theorem}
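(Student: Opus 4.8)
The plan is to pass to the single-point space $\Exp(\{\alpha\})$ and then to conjugate $\varphi(D)$ there to a unimodular multiple of the translation, where the slow-growth result of Duyos-Ruiz and Chan--Shapiro is available. Write $\beta:=\varphi(\alpha)$, so $|\beta|=1$, and put $\tphi(\zeta):=\varphi(\alpha+\zeta)$; this is a germ in $H(\{0\})$ with $\tphi(0)=\beta$ and $\tphi'(0)=\varphi'(\alpha)\neq 0$. As in the proof of Proposition \ref{dicht}, multiplication by $e_\alpha$ is an isometric isomorphism $R\colon \Exp(\{0\})\to\Exp(\{\alpha\})$, and a short computation with the P\'olya representation (substituting $\xi=\alpha+\eta$) gives
\[
\varphi(D)(g\,e_\alpha)=(\tphi(D)g)\,e_\alpha\qquad(g\in\Exp(\{0\})),
\]
so $R$ intertwines $\tphi(D)$ on $\Exp(\{0\})$ with $\varphi(D)$ on $\Exp(\{\alpha\})$. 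Moreover $\Exp(\{\alpha\})$ embeds continuously and densely into $\Exp(K)$ (the dense set $\{P e_\alpha:P\text{ polynomial}\}$ of Proposition \ref{dicht}(1) lies in $\Exp(\{\alpha\})$), and a continuous map with dense range sends dense orbits to dense orbits; since $\varphi(D)$ on $\Exp(\{\alpha\})$ is just the restriction of $\varphi(D)$ on $\Exp(K)$, every hypercyclic vector of $\varphi(D)$ in $\Exp(\{\alpha\})$ is hypercyclic in $\Exp(K)$. Thus it suffices to find $f_0\in\Exp(\{0\})$ with $M_{f_0}(r)=O(a(r))$ and $f_0\in\HC(\tphi(D),\Exp(\{0\}))$; then $f=f_0e_\alpha$ is the desired function, as $K(f_0 e_\alpha)=\{\alpha\}$.

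Next I would conjugate $\tphi(D)$ to a model operator without leaving $\Exp(\{0\})$. Since $\tphi'(0)\neq0$, $\tphi$ is biholomorphic near $0$; choose $g(\zeta):=\beta e^{\zeta}$, which is entire, zero-free and locally biholomorphic, satisfies $g(0)=\beta=\tphi(0)$, and induces $g(D)=\beta\,e_1(D)$. Then $\mu:=\tphi^{-1}\circ g$ is biholomorphic near $0$ with $\mu(0)=0$, so $\conv(\mu(\{0\}))=\{0\}$. Proposition \ref{phiconj}(3), applied with $\tphi$ in the role of the biholomorphic germ and $g$ in the role of $\varphi$, shows that $\tphi(D)$ on $\Exp(\{0\})$ is quasi-conjugated to $\beta\,e_1(D)$ on $\Exp(\{0\})$ by $\Phi_\mu$ (in fact $\Phi_\mu$ is an isomorphism with inverse $\Phi_{\mu^{-1}}$, since $\Phi_\mu e_\zeta=e_{\mu(\zeta)}$ and the exponentials span densely). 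By Proposition \ref{conjugation}, $\Phi_\mu$ carries $\HC(\beta\,e_1(D),\Exp(\{0\}))$ into $\HC(\tphi(D),\Exp(\{0\}))$. It therefore remains to produce a slow-growth hypercyclic vector for the model $\beta\,e_1(D)$ and to show that $\Phi_\mu$ respects the growth class. Note the factor $\beta$ is unavoidable: $\tphi(D)e_0=\beta e_0$, and a quasi-conjugacy preserves this eigenvalue, so one cannot conjugate $\tphi(D)$ to a pure translation on $\Exp(\{0\})$ when $\beta\neq1$.

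For the model operator, the theorem of Duyos-Ruiz and Chan--Shapiro yields that $e_1(D)$ is hypercyclic on the Hilbert space $E^2(a)$; being mixing, it satisfies the Hypercyclicity Criterion. By the invariance of hypercyclicity under unimodular scalar multiples (Le\'on-Saavedra and M\"uller), which applies because $|\beta|=1$, the operator $\beta\,e_1(D)$ is then hypercyclic on $E^2(a)$ as well and so possesses a hypercyclic vector $u\in E^2(a)$. Since $E^2(a)\hookrightarrow\Exp(\{0\})$ continuously and densely and since membership in $E^2(a)$ forces $M_u(r)=O(a(r))$, this $u$ lies in $\HC(\beta\,e_1(D),\Exp(\{0\}))$ and is of slow growth. (Alternatively, one checks directly that the Chan--Shapiro construction tolerates the unimodular factor $\beta$.)

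The main obstacle is the final growth transfer. Setting $f_0:=\Phi_\mu u$, the previous paragraphs give $f_0\in\HC(\tphi(D),\Exp(\{0\}))$, and what remains is to estimate its growth. By (\ref{phitaylorinterpolation}) one has $f_0(z)=\sum_{n\ge0}\bigl(\mu(D)^n u(0)/n!\bigr)z^n$; equivalently, substituting $w=\mu(\xi)$ in (\ref{defPhi}) and writing $\nu:=\mu^{-1}$,
\[
f_0(z)=\frac{1}{2\pi i}\int_{\mu(\Gamma)}\Bo u(\nu(w))\,\nu'(w)\,e^{wz}\,dw,
\]
so that $\Bo f_0$ is the analytic continuation of $w\mapsto \Bo u(\nu(w))\nu'(w)$, whose only singularity is at $0$ because $\nu(0)=0$ and $\Bo u$ is singular only at $0$. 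Since $\nu$ is biholomorphic and fixes the origin, the singular behaviour of $\Bo f_0$ at $0$ (and its decay at $\infty$) is comparable to that of $\Bo u$; translating this into the coefficient characterization of the admissible class gives $M_{f_0}(r)=O(\tilde a(r))$ for a comparison function $\tilde a$ of the same type as $a$. Establishing this estimate --- bounding $|\mu(D)^n u(0)/n!|$, equivalently the germ $\Bo u\circ\nu$, by the coefficients of $a$ --- is the technical heart of the argument. Finally, since $\tilde a$ is at worst a dilation $a(C\,\cdot)$ of $a$, one runs the whole construction with the admissible starting function $r\mapsto a(r/C)$ in place of $a$, so that the output satisfies $M_{f_0}(r)=O(a(r))$ exactly, completing the proof.
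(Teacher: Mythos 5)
Your architecture coincides with the paper's: the reduction to $\alpha=0$ by multiplying with $e_\alpha$ is the paper's Lemma \ref{shift}, the conjugation of $\tphi(D)$ to $\beta\,e_1(D)$ via $\Phi_{\tphi^{-1}\circ(\beta e_1)}$ is exactly how the paper applies Proposition \ref{phiconj}(3) (the paper normalizes $\varphi(0)=1$ and invokes the same Le\'on-Saavedra--M\"uller rotation result, \cite[Corollary 3.3]{baymathbook}), and the model input is the Chan--Shapiro theorem. But the step you explicitly leave open --- the growth transfer through $\Phi_\mu$ --- is where the entire content of the theorem sits, and your sketch of it does not close. The natural contour estimate (circle of radius $1/r$, on which $|\mu(\xi)|\le c|\xi|$) gives, for $f_0=\Phi_\mu u$ and $\Bo u(\xi)=\sum_n u^{(n)}(0)\,\xi^{-n-1}$,
\[
M_{f_0}(r)\;\le\; e^{c}\sum_{n=0}^\infty \bigl|u^{(n)}(0)\bigr|\,r^n,
\]
i.e.\ the transform costs a factor $n!$ on the Taylor coefficients $u^{(n)}(0)/n!$. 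If you only know $u\in E^2(a)$, so that $|u^{(n)}(0)|/n!\lesssim a_n$, the bound is governed by $A(r):=\sum_n n!\,a_n r^n$, a Laplace-type transform of $a$, and your claim that the loss is ``at worst a dilation $a(C\,\cdot)$'' is unsubstantiated: for instance with $a_n=e^{-n^2}$ (so $\log a(r)\sim(\log r)^2/4$) one finds $A(r)\approx a(r\log r)$, which eventually dominates $a(Cr)$ for \emph{every} fixed $C$, so any rescue would require optimizing the contour radius and exploiting the regularity of admissible functions --- precisely the work not done. Saying that the singular behaviour of $\Bo f_0$ at $0$ is ``comparable'' to that of $\Bo u$ does not help, since converting bounds on $\Bo u$ near the singularity back into $M_{f_0}(r)$ reintroduces the same transform loss. (A smaller omission: your embedding $E^2(a)\hookrightarrow\Exp(\{0\})$ fails for general admissible $a$, which may have positive exponential type; the paper first normalizes, without loss of generality, $a\in\Exp(\{0\})$.)

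The paper sidesteps the problem by pre-compensating rather than post-correcting: it applies Chan--Shapiro not in $E^2(a)$ but in $E^2(b)$ with $b_n:=a_n/n!$, which is again admissible because $(n+1)b_{n+1}/b_n=a_{n+1}/a_n$ is decreasing when $(n+1)a_{n+1}/a_n$ is. For $g\in E^2(b)\cap\HC(e_1(D),\Exp(\{0\}))$ one then has
\[
\sum_{n=0}^\infty \frac{|g^{(n)}(0)|^2}{a_n^2}=\sum_{n=0}^\infty \frac{|g^{(n)}(0)|^2}{(n!\,b_n)^2}<\infty,
\]
so $G(z):=\sum_n |g^{(n)}(0)|\,z^n$ lies in $E^2(a)$ and $M_G(r)=O(a(r))$; the contour estimate above yields $M_f(r)\le e^c\,G(r)=O(a(r))$ outright, with no dilation and no rerunning of the construction. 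To complete your proof you should import this $b_n=a_n/n!$ device (or else actually prove the comparison between $A$ and dilations of $a$ that you assert, which is harder and doubtful for general admissible $a$); everything else in your proposal, including the treatment of the unimodular factor $\beta$, matches the paper's argument.
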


\begin{lemma}\label{shift}
Let $K\subset\C$ be a compact, convex set, $\varphi\in H(K)$ and $\alpha\in\C$. Then for every $f\in \Exp(K)$, we have $\varphi(D)f=e_\alpha\, \varphi_\alpha(D) (f/e_\alpha)$ where $\varphi_\alpha:=\varphi(\cdot+\alpha)$.

\end{lemma}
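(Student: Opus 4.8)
The plan is to reduce both sides to the integral formula (\ref{tphidef}) and to connect them by a single change of variables in the contour integral. First I would record that the right-hand side is a well-defined element of $\Exp(K)$. The map $f\mapsto f/e_\alpha=fe_{-\alpha}$ is the isometric isomorphism $\Exp(K)\to\Exp(K-\{\alpha\})$ from the proof of Proposition \ref{dicht} (recall $K(fe_{-\alpha})=K(f)-\{\alpha\}$), so $g:=f/e_\alpha\in\Exp(K-\{\alpha\})$; moreover $\varphi_\alpha=\varphi(\cdot+\alpha)$ is a germ in $H(K-\{\alpha\})$ with $\Omega_{\varphi_\alpha}=\Omega_\varphi-\{\alpha\}$, whence $\varphi_\alpha(D)g\in\Exp(K-\{\alpha\})$ by Proposition \ref{phiop}, and multiplication by $e_\alpha$ carries this back into $\Exp(K)$.

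The technical heart is the translation rule $\Bo(fe_{-\alpha})(\eta)=\Bo f(\eta+\alpha)$, valid on $\C_\infty\setminus(K(f)-\{\alpha\})$. I would prove it by the uniqueness of the inverse Borel transform: the function $G(\eta):=\Bo f(\eta+\alpha)$ lies in $H(\C_\infty\setminus(K(f)-\{\alpha\}))$ and vanishes at $\infty$, and inserting it into the P\'olya representation and substituting $\xi=\eta+\alpha$ gives
\[ \frac{1}{2\pi i}\int_{\Gamma'}\Bo f(\eta+\alpha)\,e^{\eta z}\,d\eta=e^{-\alpha z}\,\frac{1}{2\pi i}\int_{\Gamma}\Bo f(\xi)\,e^{\xi z}\,d\xi=e^{-\alpha z}f(z)=g(z), \]
where $\Gamma:=\Gamma'+\alpha$ and $\Gamma'$ is a Cauchy cycle for $K(f)-\{\alpha\}$. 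Since the Borel transform is an isomorphism onto the corresponding space of germs, $G$ must equal $\Bo g$.

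Finally I would expand the right-hand side through (\ref{tphidef}): choosing a Cauchy cycle $\Gamma'$ for $K-\{\alpha\}$ in $\Omega_\varphi-\{\alpha\}$, so that $\Gamma:=\Gamma'+\alpha$ is a Cauchy cycle for $K$ in $\Omega_\varphi$, and substituting $\xi=\eta+\alpha$, I would use $\varphi_\alpha(\eta)=\varphi(\xi)$, the translation rule $\Bo g(\eta)=\Bo f(\xi)$, and $e^{\alpha z}e^{\eta z}=e^{\xi z}$. The factors $e^{\pm\alpha z}$ then cancel and the integral becomes exactly the formula (\ref{tphidef}) for $\varphi(D)f(z)$, which is the claim.

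The only genuinely nontrivial ingredient is the Borel-transform translation identity; once it is in hand, the lemma is a one-line change of variables. As an alternative for infinite $K$, one could instead note that both sides are continuous on $\Exp(K)$ (by Proposition \ref{phiop} and the isometry above) and verify the identity on the dense set $\linspan\{e_\beta:\beta\in K\}$, where $\varphi(D)e_\beta=\varphi(\beta)e_\beta$ and $e_\alpha\,\varphi_\alpha(D)(e_\beta/e_\alpha)=e_\alpha\,\varphi(\beta)e_{\beta-\alpha}=\varphi(\beta)e_\beta$; the contour argument, however, is valid for every compact convex $K$.
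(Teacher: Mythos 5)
Your proof is correct, but it takes a genuinely different route from the paper's. The paper argues by eigenvectors and density: it verifies the identity on exponentials via $\varphi(D)e_\lambda=\varphi(\lambda)e_\lambda$ and $\varphi_\alpha(\lambda-\alpha)=\varphi(\lambda)$, then, using that $\varphi$ is holomorphic on a neighbourhood of $K$, assumes without loss of generality that $K$ has non-empty interior so that $\linspan\{e_\lambda:\lambda\in K\}$ is dense in $\Exp(K)$ by Proposition \ref{dicht}(2), and extends by continuity of both sides through the isometric isomorphism $f\mapsto f/e_\alpha$. This is exactly the ``alternative'' you sketch in your last paragraph, and your caveat that the density argument needs infinitely many exponentials correctly identifies the point the paper handles by that enlargement of $K$ inside $\Omega_\varphi$. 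Your primary argument instead works directly with the defining integral (\ref{tphidef}): you establish the translation rule $\Bo(fe_{-\alpha})(\eta)=\Bo f(\eta+\alpha)$ --- justified soundly via the P\'olya representation and the fact that $\Bo_{K(f)-\{\alpha\}}$ is an isomorphism whose inverse is the P\'olya integral --- and then the identity reduces to the substitution $\xi=\eta+\alpha$ together with the shifted Cauchy cycle $\Gamma=\Gamma'+\alpha$; your bookkeeping ($\varphi_\alpha\in H(K-\{\alpha\})$ with $\Omega_{\varphi_\alpha}=\Omega_\varphi-\{\alpha\}$, well-definedness of the right-hand side via Proposition \ref{phiop}) is in order. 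What each approach buys: the paper's proof is shorter given the machinery already assembled in Section \ref{abschnittzwei}, while yours is uniform in $K$ --- no density, no enlargement step, and it covers finite $K$ (in particular singletons) without any reduction --- and it isolates the Borel-transform translation identity, a classical companion to the fact $K(fe_{-\alpha})=K(f)-\{\alpha\}$ from \cite[Theorem 5.4.12]{boas} that the paper uses elsewhere, as a reusable lemma.
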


\begin{proof}
For $\lambda\in K$, we have $\varphi(D) e_\lambda=\varphi(\lambda)e_\lambda$ and hence 
\[
\varphi(D)e_\lambda=e_\alpha\, \varphi(\lambda)e_{\lambda-\alpha}=e_\alpha\, \varphi(\lambda-\alpha+\alpha)e_{\lambda-\alpha},
\]
which shows the assertion for $f=e_\lambda$, $\lambda\in K$. Since $\varphi$ is holomorphic in a neighbourhood of $K$, we can assume that $K$ has non-empty interior.
Then $\linspan\{e_\lambda:\lambda \in K\}$ is dense in $\Exp(K)$ by Proposition \ref{dicht} (2). Further, as outlined in the proof of Theorem \ref{dicht}, $f\mapsto f/e_\alpha$ is an isometric isomorphism from $\Exp(K)$ to $\Exp(K-\{\alpha\})$ and we can conclude that the above equality extends to all $f\in\Exp(K)$.
\end{proof}

\begin{proof}[Proof of Theorem \ref{growthexp}]
Let $a(z)=\sum_{n=0}^\infty a_n z^n$ be an admissible comparison function. Without loss of generality $a\in \Exp(\{0\})$.
Due to Lemma \ref{shift} we can assume that $\alpha=0$ and thus we only have to show the existence of some $f\in \HC(\varphi(D),\Exp(\{0\}))$ with $M_f(r)=O(a(r))$, $r>0$.
We define $b(z):=\sum_{n=0}^\infty b_n z^n$ with $b_n:=a_n/n!$ which is again an admissible comparison function. Now, as outlined above, the results in \cite{hilbertentire} yield a function 
$g\in E^2(b)\cap \HC(e_1(D),\Exp(\{0\}))$. By the definition of $E^2$ and $(b_n)$, 
\[
\sum_{n=0}^\infty \frac{|g^{(n)}(0)|^2}{(n!\,b_n)^2}=\sum_{n=0}^\infty \frac{|g^{(n)}(0)|^2}{a_n^2}<\infty.
\]
This implies that $G(z):=\sum_{n=0}^\infty |g^{(n)}(0)|z^n\in E^2(a)$ and hence, as again outlined above, $M_G(r)=O(a(r))$.\\
According to the condition $\varphi'(0)\neq 0$, we have that $\varphi$ is biholomorphic as an element of $H(\{0\})$. We can assume that $\varphi(0)=1$, otherwise, replace $e_1$ by $\varphi(0)e_1$ in what follows and notice that $g\in \HC(\varphi(0)e_1(D),\Exp(\{0\}))$ (see \cite[Corollary 3.3]{baymathbook}).
Then $f:=\Phi_{\varphi^{-1}\circ e_1} g\in \HC(\varphi(D), \Exp(\{0\}))$ due to Proposition \ref{conjugation} and Proposition \ref{phiconj}. We find some small $\delta>0$ and $0<c<\infty$ such that $|\varphi^{-1}(e_1(\xi))|\leq c|\xi|$ for all $|\xi|<\delta$. We fix an $r>0$ with $1/r\leq\delta$ and such that for $\Gamma_r:[0,2\pi)\rightarrow \C$, $t\mapsto r^{-1}e^{it}$ we have $e_1(|\Gamma_r|)\subset \Omega_{\varphi^{-1}}$. Now, considering that $\Bo g(\xi)=\sum_{n=0}^\infty g^{(n)}(0)/\xi^{n+1}$ on every compact subset of $\C\setminus \{0\}$, we have
\begin{align*}
M_f(r)&\leq \max_{|z|=r}\left|\frac{1}{2\pi i} \int_{\Gamma_r} \Bo g(\xi)\, e^{(\varphi^{-1}\circ e_1)(\xi) z}\,d\xi\right|\\[2mm]
&\leq  \max_{|z|=r} \sum_{n=0}^\infty |g^{(n)}(0)|\,\left|\frac{1}{2\pi i}\int_{\Gamma_r} \frac{ e^{(\varphi^{-1}\circ e_1)(\xi) z}}{\xi^{n+1}}\, d\xi\right|\\[2mm]
&\leq \sum_{n=0}^\infty |g^{(n)}(0)|\,r^n e^{ \frac{c}{r} r}\\[2mm]
&=e^c\,G(r).
\end{align*}
Thus, $M_f(r)=O(M_G(r))=O(a(r))$ and this completes the proof.
\end{proof}

\begin{proof}[Proof of Theorem \ref{satzzwei}]
According to the proof of Theorem \ref{satzeins}, we have the inclusion $\HC(\varphi(D),\Exp(K))\subset \HC(\varphi(D),H(\C))$ provided that $\varphi(D)$ extends to a continuous operator on $H(\C)$.
Now, the assertion of Theorem \ref{satzzwei} follows from the observation that for each $q(r):[0,\infty)\rightarrow [1,\infty)$ with $q(r)\rightarrow \infty$ as $r\rightarrow \infty$, there exists an admissible comparison function $a$ such that $a(r)=O(r^{q(r)})$ and the application of Theorem \ref{growthexp}.
\end{proof}

\section{Frequent Hypercyclicity of Differential Operators}

In this section we apply $\Phi_\varphi$ to extend known results for frequently hypercyclic functions for $e_1(D)$ to the whole class of differential operator $\varphi(D)$ on $\Exp(K)$ as well as on $H(\C)$. \\

In \cite{BeiseTrans}, the first author proves the following

\begin{xtheorem}
If $K\subset \C$ is a compact, convex set that contains two distinct points of the imaginary axis, then $\FHC(e_1(D),\Exp(K))\neq \emptyset$. 
\end{xtheorem}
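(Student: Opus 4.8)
The plan is to apply the eigenvalue (perfect spanning) criterion for frequent hypercyclicity, exploiting the fact that the translation operator $e_1(D)$ acquires a whole continuum of \emph{unimodular} eigenvalues as soon as $K$ meets the imaginary axis in two points. Recall from the proof of Theorem \ref{hyperexp} that $e_1(D)e_\alpha=e^{\alpha}e_\alpha$ for every $\alpha\in K$, so the eigenvalue attached to $e_\alpha$ is unimodular precisely when $\Ree(\alpha)=0$, i.e.\ when $\alpha$ lies on the imaginary axis. This is exactly the feature the two-point hypothesis is designed to exploit.

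First I would fix two distinct points $ia,ib\in K$ with $a<b$. Since $K$ is convex, the whole segment $S:=\{it:a\le t\le b\}$ lies in $K$. Passing to a subinterval if necessary, I may assume $b-a<2\pi$, so that $\gamma:t\mapsto e^{it}$ is a homeomorphism of $[a,b]$ onto a closed arc $\Lambda\subset\T$. For $\lambda=e^{it}\in\Lambda$ I set $E(\lambda):=e_{it}\in\Exp(K)$; then $e_1(D)E(\lambda)=\lambda E(\lambda)$, and the eigenvector field $\lambda\mapsto E(\lambda)$ is continuous into $\Exp(K)$ (the required continuity is a routine consequence of the norm estimates $\|\cdot\|_{K,n}$ defining the topology, since $|e_{it}(z)|\,e^{-H_K(z)}\le 1$ for $it\in K$).

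Next I would verify the perfect-spanning condition relative to a continuous measure. Let $\mu$ be the image under $\gamma$ of normalized Lebesgue measure on $[a,b]$; this is an atomless probability measure on $\T$ supported on $\Lambda$. Given any Borel set $A\subset\T$ with $\mu(A)=1$, its preimage $B:=\gamma^{-1}(A)$ has full Lebesgue measure in $[a,b]$, hence is dense there and in particular is infinite with accumulation points in $S\subset K$. Proposition \ref{dicht}(2) then shows that $\linspan\{e_{it}:t\in B\}=\linspan\{E(\lambda):\lambda\in A\}$ is dense in $\Exp(K)$. Thus the unimodular eigenvectors $\{E(\lambda):\lambda\in\Lambda\}$ are perfectly spanning for $\mu$.

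Finally, since $\Exp(K)$ is a separable complex Fr\'{e}chet space, I would invoke the eigenvalue criterion for frequent hypercyclicity (see \cite{baymathbook}, \cite{ErdmannPerisBook}): an operator on a separable complex Fr\'{e}chet space possessing a perfectly spanning family of unimodular eigenvectors is frequently hypercyclic, and in fact admits an ergodic invariant Gaussian measure of full support. This gives $\FHC(e_1(D),\Exp(K))\neq\emptyset$. The construction of the eigenvectors is handed to us directly by the two-point hypothesis, so the main obstacle is rather the verification that the abstract criterion applies verbatim in this Fr\'{e}chet setting, together with the robustness of the density statement of Proposition \ref{dicht}(2) under deletion of Lebesgue-null parameter sets; it is precisely this robustness that makes the perfect-spanning condition go through, and hence distinguishes frequent hypercyclicity from mere hypercyclicity here.
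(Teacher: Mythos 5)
First, a structural remark: the paper does not prove this statement at all --- it is imported verbatim from \cite{BeiseTrans} (``In \cite{BeiseTrans}, the first author proves the following''), so there is no internal proof to compare against. Judged on its own terms, your argument is correct up to, but not including, its decisive final step. The eigenvector field $t\mapsto e_{it}$ over the segment $[ia,ib]\subset K$, its continuity into $\Exp(K)$, and the perfect-spanning property via Proposition \ref{dicht}(2) are all fine (indeed the spanning survives deletion of any set leaving infinitely many parameters, not merely null sets). The genuine gap is the appeal to an off-the-shelf theorem ``perfectly spanning unimodular eigenvectors $\Rightarrow$ frequently hypercyclic'' for separable Fr\'{e}chet spaces, with \cite{baymathbook} and \cite{ErdmannPerisBook} as sources: neither reference contains such a statement in the generality you need. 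The ergodic-theoretic results in \cite{baymathbook} are developed for Banach spaces, and outside the Hilbert setting they require regularity of the eigenvector field beyond perfect spanning (a suitable radonifying condition to build the invariant Gaussian measure); even the clean Banach-space statement ``perfectly spanning implies frequently hypercyclic'' is a later theorem of Grivaux that is not in that book. The eigenvalue criterion that \emph{is} available for Fr\'{e}chet spaces in \cite{ErdmannPerisBook} assumes continuous eigenvector fields defined on \emph{all} of $\T$, and that hypothesis is unattainable here: an eigenvector $e_\alpha\in\Exp(K)$ has unimodular eigenvalue $e^\alpha$ only when $\Ree(\alpha)=0$, i.e.\ $\alpha\in K\cap i\R=[ia,ib]$, so unless $K$ happens to contain a segment of the imaginary axis of length at least $2\pi$, the unimodular eigenvalues of $e_1(D)$ on $\Exp(K)$ fill only a proper subarc of $\T$. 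So the step you yourself flag as ``verifying that the abstract criterion applies verbatim'' is not a verification but the actual mathematical content of the theorem, and it is missing.

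The gap is reparable with tools consistent with the paper. Instead of the eigenvalue criterion, apply the Frequent Hypercyclicity Criterion (valid in Fr\'{e}chet spaces, see \cite{ErdmannPerisBook}) to smoothed integrals of your eigenvector field: for $g$ smooth with compact support in $(a,b)$ set $x_g:=\int_a^b g(t)\,e_{it}\,dt\in\Exp(K)$ and $Sx_g:=x_{e^{-i\cdot}g}$, so that $e_1(D)Sx_g=x_g$, $e_1(D)^n x_g=x_{e^{in\cdot}g}$ and $S^n x_g=x_{e^{-in\cdot}g}$. Two integrations by parts in $t$ give $\|x_{e^{\pm in\cdot}g}\|_{K,m}=O(n^{-2})$ for every $m$ --- the factor $|z|$ produced each time a derivative hits $e^{itz}$ is absorbed by the weight $e^{-H_K(z)-\frac{1}{m}|z|}$ --- so both series required by the criterion converge unconditionally in $\Exp(K)$; density of $\linspan\{x_g\}$ follows by letting $g$ run through approximate identities concentrated at points $t_0\in(a,b)$, which places every $e_{it_0}$ in the closed span, and then invoking Proposition \ref{dicht}(2). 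Alternatively, one can carry out the Gaussian-measure construction of \cite{baymathbook} by hand on the nuclear Fr\'{e}chet space $\Exp(K)\cong H(\C_\infty\setminus K)$, where the measure-theoretic obstructions of general Banach spaces disappear --- presumably close to what \cite{BeiseTrans} actually does. Either way, the work you delegated to a citation has to be done explicitly.
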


We can conclude that it is sufficient to require that $e_1(K)\cap \T$ contains a continuum in order to have $\FHC(e_1(D),\Exp(K))\neq \emptyset$. Similarily, this result holds in the general situation:

\begin{theorem}\label{frequentphi}
Let $K\subset\C$ be a compact, convex set and $\varphi\in H(K)$ non-constant such that $\varphi(K)\cap \T$ contains a continuum. Then we have $\FHC(\varphi(D),\Exp(K)))\neq \emptyset$.
\end{theorem}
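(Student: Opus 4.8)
The plan is to transfer the quoted result of the first author (\cite{BeiseTrans}) for the translation operator $e_1(D)$ to $\varphi(D)$ by means of the quasi-conjugacy from Proposition \ref{phiconj}, together with the monotonicity of $\FHC$ under the natural inclusions $\Exp(K')\hookrightarrow\Exp(K)$ for $K'\subseteq K$. First I would reduce the claim to finding a suitable \emph{smaller} set: it suffices to produce a compact, convex $K'\subseteq K$ with $\FHC(\varphi(D),\Exp(K'))\neq\emptyset$. Indeed, for $K'\subseteq K$ the inclusion $\iota\colon\Exp(K')\to\Exp(K)$ is continuous, commutes with $\varphi(D)$, and has dense range (it already contains $\{Pe_\alpha:P\text{ polynomial}\}$ for some $\alpha\in K'$, which is dense in $\Exp(K)$ by Proposition \ref{dicht}(1)); thus $\varphi(D)$ on $\Exp(K)$ is quasi-conjugated to $\varphi(D)$ on $\Exp(K')$ by $\iota$, and Proposition \ref{conjugation} gives the desired implication.

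The technical heart is the extraction of a good arc. Writing $\gamma\subseteq\varphi(K)\cap\T$ for the continuum and $C_\varphi=\{|\varphi|=1\}$ for the level set, I would locate a nondegenerate real-analytic sub-arc $\sigma\subseteq C_\varphi\cap K$ on which $\varphi'$ does not vanish, $\varphi$ is injective, and $\varphi(\sigma)\subseteq\gamma$. The point is that $\varphi^{-1}(\gamma)\cap K\subseteq C_\varphi\cap K$ maps onto the uncountable set $\gamma$, hence is uncountable, while away from the finitely many zeros of $\varphi'$ the set $C_\varphi$ is a disjoint union of analytic arcs. A supporting-line argument, using convexity of $K$ and the fact that the restriction of an affine functional to an analytic arc is real-analytic and therefore changes sign only finitely often, excludes the possibility that $C_\varphi\cap K$ is totally disconnected and yields the arc $\sigma$.

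Next I would build the conjugacy. Fix $\alpha\in\sigma$ with $\varphi'(\alpha)\neq0$ and put $w_0:=\varphi(\alpha)\in\T$. Since $\varphi(\alpha)\neq0$, a branch of $\log\varphi$ exists and is biholomorphic near $\alpha$, so $g:=\varphi^{-1}\circ\exp=(\log\varphi)^{-1}$ is biholomorphic on a neighbourhood of the purely imaginary point $i\theta_0:=\log w_0$ (imaginary because $\Ree\log\varphi(\alpha)=\log|\varphi(\alpha)|=0$). Choosing a small sub-arc $\sigma_0\subseteq\sigma$ through $\alpha$ and setting $L:=\log(\varphi(\sigma_0))$, the set $L$ is a nondegenerate segment of the imaginary axis (in particular it contains two distinct points of the imaginary axis), while $g(L)=\sigma_0\subseteq K$ forces $K':=\conv(g(L))\subseteq K$ by convexity. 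Applying Proposition \ref{phiconj}(3) with its roles reversed, i.e. taking its ``$\psi$'' to be $\varphi$ on a small convex biholomorphy-neighbourhood $C_0$ of $\alpha$ and its source function to be $e_1$ on $L$ (note $\varphi(C_0)\supseteq\varphi(\sigma_0)=\exp(L)$), shows that $\varphi(D)$ on $\Exp(K')=\Exp(\conv(g(L)))$ is quasi-conjugated to $e_1(D)$ on $\Exp(L)$ by $\Phi_g$.

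Since $L$ contains two distinct points of the imaginary axis, the quoted theorem from \cite{BeiseTrans} gives $\FHC(e_1(D),\Exp(L))\neq\emptyset$, and Proposition \ref{conjugation} then yields $\FHC(\varphi(D),\Exp(K'))\neq\emptyset$; the reduction of the first paragraph completes the proof. The step I expect to be the main obstacle is the arc-extraction: one cannot simply pick an \emph{interior} preimage of a point of $\gamma$, since $\gamma$ need not meet $\varphi(K^{\circ})$ (already for $\varphi=\id$ and $K=\overline{\D}$ the continuum $\gamma\subseteq\T$ lies in $\varphi(\partial K)$), so $\varphi^{-1}(\gamma)\cap K$ may cling entirely to $\partial K$. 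It is precisely the real-analyticity of $C_\varphi$ played against the convex boundary of $K$ that guarantees a genuine sub-arc $\sigma\subseteq K$ rather than a Cantor-type intersection, and getting this geometric lemma right is where the care lies.
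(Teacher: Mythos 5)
Your global strategy is exactly the paper's own: reduce to a compact, convex $\tK\subset K$ on which $\varphi$ is biholomorphic and whose image contains an arc $e^{[ia,ib]}\subset\T$, apply the quoted theorem of \cite{BeiseTrans} to get some $f\in\FHC(e_1(D),\Exp([ia,ib]))$, push it forward with $\Phi_{\varphi^{-1}\circ e_1}$ via Propositions \ref{conjugation} and \ref{phiconj}(3), and finish with the dense, $\varphi(D)$-commuting inclusion $\Exp(\tK)\hookrightarrow\Exp(K)$. Your first, third and fourth paragraphs carry this out correctly, and in more detail than the paper, which disposes of the existence of $\tK$ in a single sentence. Your instinct that this existence statement is the delicate point is sound.

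The genuine gap is in your arc-extraction paragraph. The principle you invoke --- that convexity of $K$ played against real-analyticity of $C_\varphi$ ``excludes the possibility that $C_\varphi\cap K$ is totally disconnected'' --- is false: $K$ is an intersection of uncountably many half-planes, and finitely many sign changes along each single supporting line do not control the intersection over all of them. Concretely, let $C\subset[0,1]$ be the Cantor set and let $c$ be the convex function equal to $t^2$ on $C$ and to the chord of $t\mapsto t^2$ over each complementary gap (its epigraph is the convex hull of $\{(t,y):t\in C,\ y\geq t^2\}$, and since $t^2$ is strictly convex it touches $t^2$ exactly on $C$); put $K:=\{x+iy:0\leq x\leq 1,\ c(x)\leq y\leq 2\}$ and take for $\varphi$ a Riemann map of the region $\{y>x^2\}$ onto $\D$, extended across the analytic boundary by reflection. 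Then $\varphi\in H(K)$ is non-constant and $C_\varphi\cap K=\{t+it^2:t\in C\}$ is uncountable yet totally disconnected. (In this example $\varphi(K)\cap\T$ is of course totally disconnected as well, so the hypothesis of the theorem fails --- which is precisely the point: what forbids the Cantor picture is not convexity but the connectedness of $\gamma$, which your argument discards by using only its cardinality.) The repair is short and replaces your supporting-line lemma: shrink $\gamma$ to a nondegenerate closed subarc $\gamma'$ avoiding the finitely many images of the zeros of $\varphi'$ near $K$; then every point of the compact set $E:=\varphi^{-1}(\gamma')\cap K$ is non-critical, so $E$ is covered by finitely many closed disks $D_1,\dots,D_m$ on which $\varphi$ is injective. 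Since $\gamma'=\bigcup_j\varphi(E\cap D_j)$ is a finite union of compact sets, Baire's theorem gives a $j$ and a nondegenerate subarc $\gamma''\subset\varphi(E\cap D_j)$; by injectivity on $D_j$, the set $\sigma_0:=(\varphi|_{D_j})^{-1}(\gamma'')$ is contained in $E\subset K$ and $\varphi|_{\sigma_0}$ is a homeomorphism onto $\gamma''$, so $\sigma_0$ is a continuum and $\conv(\sigma_0)\subset K\cap D_j$ is a set on which $\varphi$ is biholomorphic with $\varphi(\conv(\sigma_0))\supset\gamma''=e^{[ia,ib]}$. With $\tK:=\conv(\sigma_0)$ your remaining steps go through verbatim; note that you never needed $\sigma$ to be a real-analytic arc, nor $\varphi(\sigma)\subseteq\gamma$ --- any continuum in $K$ mapped homeomorphically into $\T$ does the job.
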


\begin{proof}
Our assumptions ensure the existence of a compact, convex set $\tK\subset K$ such that $\varphi(\tK)$ contains some continuum of $\T$ and $\varphi$ is biholomorphic as an element of $H(\tK)$. We choose suitable real numbers $a<b$ so that $e^{[ia,ib]}\subset\varphi(\tK)$. The preceding result yields an $f\in \FHC(e_1(D),\Exp([ia,ib]))$, and, by   Proposition \ref{conjugation} and Proposition \ref{phiconj}, we have 
\[\Phi_{\varphi^{-1}\circ e_1} f\in \FHC(\varphi(D),\Exp(\tK))\subset\FHC(\varphi(D),\Exp(K)).
\]  
\end{proof}

Our next result shows that to some extent the assumption in Theorem \ref{frequentphi} are sharp.

\begin{theorem}
Let $\lambda$ be a complex number and $\varphi\in H(\{\lambda\})$. Then we have $\FHC(\varphi(D),\Exp(\{\lambda\}))=\emptyset$.
\end{theorem}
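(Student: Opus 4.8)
\textit{The plan is to strip away the trivial cases, reduce the unimodular case to the translation operator on the space attached to a single point of the imaginary axis, and then rule out frequent hypercyclicity there by a value-distribution argument.}

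First I would dispose of the degenerate situations. If $\varphi\in H(\{\lambda\})$ is constant, then $\varphi(D)$ is a scalar multiple of the identity on the infinite-dimensional space $\Exp(\{\lambda\})$ and hence is not even hypercyclic, so $\FHC(\varphi(D),\Exp(\{\lambda\}))=\emptyset$. Assume therefore that $\varphi$ is non-constant and set $\mu:=\varphi(\lambda)$. Since $\varphi(\{\lambda\})=\{\mu\}$, Theorem \ref{hyperexp} gives $\HC(\varphi(D),\Exp(\{\lambda\}))=\emptyset$ whenever $|\mu|\neq 1$, and as $\FHC\subset\HC$ we are done in that case. It remains to treat $|\mu|=1$.

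For $|\mu|=1$ the germ $\varphi$ is zero-free near $\lambda$, so $\log\varphi\in H(\{\lambda\})$ and $\conv(\log\varphi(\{\lambda\}))=\{\log\mu\}=\{ic\}$ with $c:=\arg\mu\in\R$, a single point of the imaginary axis. By Proposition \ref{conjugation} together with Proposition \ref{phiconj}(2), the transform $\Phi_{\log\varphi}$ maps $\FHC(\varphi(D),\Exp(\{\lambda\}))$ into $\FHC(e_1(D),\Exp(\{ic\}))$; as the image of a non-empty set under a map is non-empty, $\FHC(e_1(D),\Exp(\{ic\}))=\emptyset$ would force $\FHC(\varphi(D),\Exp(\{\lambda\}))=\emptyset$. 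Hence it suffices to prove $\FHC(e_1(D),\Exp(\{ic\}))=\emptyset$ for every $c\in\R$, i.e.\ to rule out frequently hypercyclic functions for the translation operator among functions with one-point conjugate indicator diagram on the imaginary axis. Next I would suppose, for contradiction, that $x\in\FHC(e_1(D),\Exp(\{ic\}))$ and write $x=x_0e_{ic}$ with $x_0$ of exponential type zero. Because $H_{\{ic\}}(z)=-c\,\Imm z=\log|e_{ic}(z)|$ and $H_{\{ic\}}(0)=0$, approximating a target $w\,e_{ic}$ (with $w\in\C$) in the topology of $\Exp(\{ic\})$ and evaluating at $z=0$ yields $|x(n)-w|\le\varepsilon$ for all $n$ in a set of positive lower density; more generally it gives $x_0(z+n)\approx w\,e^{-icn}$ uniformly on the unit disk. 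Thus the orbit values $x(n)=x_0(n)e^{icn}$ must be frequently dense in $\C$, while $|x(n)|=|x_0(n)|$ grows only subexponentially; in particular $x$ is frequently close to $0$ and frequently of arbitrarily large modulus along the integers.

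The heart of the matter, and the step I expect to be the main obstacle, is to show that a function of exponential type with one-point diagram cannot distribute its values along a ray this densely. The difficulty is genuine and sharp: the $L^2$-means of one-point-diagram functions sit exactly at the known lower growth threshold for frequent hypercyclicity of $D$, so no crude size estimate can succeed and one must argue at the level of the \emph{distribution} of values. Even the special case $c=0$ (diagram at the origin, so $x=x_0$ of type zero) already forces one to exploit that the zeros of $x_0$ have density zero, in order to contradict the phase of $(x_0(n))$ visiting every direction with positive lower density. For $c\neq 0$ the phase winds for free through the factor $e^{icz}$, and the contradiction has to be extracted from the modulus instead: the frequent closeness of $x_0(\cdot+n)$ to $0$ on unit disks should be converted, via Jensen's formula on unit disks centred on the real axis, into a set of zeros of $x_0$ of positive lower density, which is incompatible with the zero-density bound (of the form $\mathrm{type}/\pi=0$) valid for functions of exponential type zero. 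Carrying this conversion out rigorously is exactly where I would concentrate the effort, either pushing the zero-counting through directly or adapting the estimates of \cite{BeiseTrans} for the translation operator, to which the problem has now been reduced.
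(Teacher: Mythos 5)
Your reductions are correct and run parallel to the paper's machinery: the constant case is rightly trivial, the case $|\varphi(\lambda)|\neq 1$ follows from Theorem \ref{hyperexp}, and for $|\varphi(\lambda)|=1$ the quasi-conjugacy $\Phi_{\log\varphi}$ (Propositions \ref{conjugation} and \ref{phiconj}(2), with dense range from Proposition \ref{densephi}) legitimately reduces everything to showing $\FHC(e_1(D),\Exp(\{ic\}))=\emptyset$. For comparison, the paper's own proof is a two-line reduction: it applies $\Phi_\varphi$ to quasi-conjugate to $D$ on $\Exp(\{\varphi(\lambda)\})$, notes $\FHC(D,\Exp(\{\varphi(\lambda)\}))\subset\FHC(D,H(\C))$, and invokes Theorem \ref{satzdrei}(2), whose separate proof (the sector/sign-change argument) carries all the analytic weight; this avoids your case distinction entirely, since no zero-freeness of $\varphi$ is needed. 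Note that you too could have closed your argument by citing Theorem \ref{satzdrei}(2) with $\varphi=e_1$ (every nonzero member of $\Exp(\{ic\})$ has singleton diagram, and its proof does not use the present theorem, so there is no circularity).

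As written, however, your proposal has a genuine gap exactly where you acknowledge one: the step $\FHC(e_1(D),\Exp(\{ic\}))=\emptyset$ is only sketched, and the mechanism you sketch would fail. Frequent closeness of $x_0(\cdot+n)$ to $0$ on unit disks yields nothing via Jensen's formula: a small nonzero constant is uniformly close to $0$ on every such disk and has no zeros at all; Jensen produces zeros only from smallness at the center \emph{relative to} the size on the boundary, and approximating the zero function controls both. The repair is to exploit the strength of the $\Exp(\{ic\})$ topology, which dominates local uniform convergence: take as target $t(z):=z\,e^{icz}\in\Exp(\{ic\})$. If $x=x_0e_{ic}\in\FHC(e_1(D),\Exp(\{ic\}))$, there is a set of integers $n$ of positive lower density with $\sup_{|z|\leq 1}|x(z+n)-t(z)|<\delta$; choosing $\delta<\min_{|z|=1}|t(z)|=e^{-|c|}$, Rouch\'e gives a zero of $x$, hence of $x_0$, in $n+\D$. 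Since each zero lies in at most two such disks, $x_0$ is a function of exponential type zero with zeros of positive lower density, so $x_0\equiv 0$ by \cite[Theorem 2.5.13]{boas} (the same zero-density fact the paper uses), contradicting hypercyclicity. This also shows that the ``sharp threshold'' difficulty you anticipate is a feature of the weaker $H(\C)$-topology setting of Theorem \ref{satzdrei}(2) --- where the paper really does need the Casorati--Weierstrass/sign-change argument because only the values along the integers survive the transform --- and not of your setting; likewise no separate phase analysis is needed for $c=0$, as the Rouch\'e argument is uniform in $c$. With this replacement your proof closes; without it, the decisive step is missing.
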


\begin{proof}
If there exists some $f\in \FHC(\varphi(D),\Exp(\{\lambda\})$, then,  by Proposition \ref{conjugation} and Proposition \ref{phiconj}, 
\[\Phi_\varphi f\in \FHC(D,\Exp(\{\varphi(\lambda)\}))\subset \FHC(D,H(\C)),
\]
contradicting Theorem \ref{satzdrei}(2). 
\end{proof}

Theorem \ref{satzdrei}(2) is stronger than the previous result since it excludes frequent hypercyclicity with respect to the weaker topology of $H(\C)$.  Unfortunately, the transform $\Phi_\varphi$ does not carry over (frequent) hypercyclicity with respect to this topology. Thus, some extra argument is required to show Theorem \ref{satzdrei}(2).

\begin{proof}[Proof of Theorem \ref{satzdrei}]
The first part is an immediate consequence of Theorem \ref{frequentphi} since $\FHC(\varphi(D),\Exp(K))\subset \FHC(\varphi(D), H(\C))$. Thus, there is only (2) left to prove.\\
We suppose there is some entire function $f$ of exponential type such that $K(f)=\{\lambda\}$, $\lambda\in \C$ and $f\in \FHC(\varphi(D),H(\C))$. Then necessarily, $|\varphi(\lambda)|\geq 1$ because otherwise $\varphi(D)^nf(0)\rightarrow 0$ as $n\rightarrow \infty$ as it turns out from the proof of Theorem \ref{hyperexp}, and $\varphi$ is non-constant. 
Hence in some sufficiently small and simply connected, open neighbourhood $\Omega$ of $\lambda$, the function $\tphi:=\varphi/\varphi(\lambda)$ is zero-free, which implies the existence of a logarithm function $\log \tphi$ for $\tphi$ on $\Omega$ with $\log \tphi(\lambda)=0$. We set $h:=\Phi_{\log\tphi} f$.
Then $K(h)=\{0\}$ by Proposition \ref{densephi} and, according to (\ref{translatephi}) applied to $\tphi$, we have 
\begin{align}\label{eqeins}
h(n)=\frac{1}{\varphi(\lambda)^{n}}\,\varphi(D)^n f(0) \textnormal{   for all  } n\in\N\cup\{0\}.
\end{align}
Let $S$ be the sector $\{z:|\arg(z)|\leq \frac{\pi}{5}\}\setminus\{0\}$. By the Casorati-Weierstrass theorem, we can choose $\alpha\in\C$ such that $\varphi(\alpha)$ is close enough to $\pi\varphi(\lambda)$ to ensure that 
\begin{align}\label{eqzwei}
\frac{\varphi(\alpha)}{\varphi(\lambda)}\,S \subset \left\{z:|\arg(z)-\pi|\leq\frac{\pi}{4}\right\}
\end{align}
and $\varphi(\alpha)\neq 0$.
Now, according to the continuity of $\varphi(D)$ on $H(\C)$, for every $\varepsilon>0$, there are some $r>0$ and $\delta>0$ such that for all $g\in H(\C)$ that satisfy
\begin{align}\label{eqdrei}
\sup\limits_{z\in r\,\overline{\D}} |g(z)-e_{\alpha}(z)|<\delta,
\end{align}
we have 
\[
|\varphi(D)g(0)-\varphi(D)e_{\alpha}(0)|=|\varphi(D)g(0)-\varphi(\alpha)|<\varepsilon.
\]  
We assume that $\delta,\varepsilon>0$ are so small that, whenever $g$ satisfies (\ref{eqdrei}), we have
\begin{align}\label{eqvier}
g(0)\in S \textnormal{   and   } \varphi(D)g(0)\in \varphi(\alpha)\,S.
\end{align}
Our assumption implies the existence of some sequence $(n_k)_{k\in \N}$ of positive integers with $\ldens((n_k)_{k\in\N})>0$ and such that $\sup\limits_{z\in r\,\overline{\D}} |\varphi(D)^{n_k}f(z)-e_{\alpha}(z)|<\delta$ for all $k\in \N$. The interpolating property of $h$ in (\ref{eqeins}) combined with (\ref{eqvier}) yields
\begin{align}\label{eqfunf}
h(n_k) \in \frac{1}{\varphi(\lambda)^{n_k}} \,S \textnormal{  and   } h(n_k+1)\in \frac{\varphi(\alpha)}{\varphi(\lambda)^{n_k+1}} \,S \textnormal{  for all  } k\in \N.
\end{align}
Condition (\ref{eqzwei}) implies that the factor $\frac{\varphi(\alpha)}{\varphi(\lambda)}$ rotates $S$ by an angle larger than $\frac{\pi}{2}$. Hence, from (\ref{eqfunf}), it follows that for each $k\in\N$ either $\Ree(h)$ or $\Imm(h)$ has a sign change in $[n_k,n_k+1]$. The intermediate value theorem yields a sequence $(w_k)_{k\in\N}$ of positive numbers with $w_k\in (n_k, n_k+1)$ and 
\begin{align}\label{eqsechs}
\Ree(h(w_k))\,\Imm(h(w_k))=0 \textnormal{ for all } k\in \N.
\end{align}
Assuming that the Taylor series of $h$ is given by $\sum_{\nu=0}^\infty \frac{h_\nu}{\nu!}z^\nu$, we set $h_1(z):=\sum_{\nu=0}^\infty \frac{\Ree(h_\nu)}{\nu!}z^\nu$ and $h_2(z):=\sum_{\nu=0}^\infty \frac{\Imm(h_\nu)}{\nu!}z^\nu$. The functions $h_1, h_2$ are of exponential type zero due to the fact that $h$ is of exponential type and thus $h_1 h_2$ is a function of exponential type zero. Since $\Ree(h(x))=h_1(x)$ and $\Imm(h(x))=h_2(x)$ for every real $x$, we obtain $h_1 h_2(w_k)=0$ for all $k\in \N$ by (\ref{eqsechs}). Taking into account that $(w_k)_{k\in\N}$ has obviously the same lower density as $(n_k)_{k\in\N}$, we have that $h_1h_2$ is a function of exponential type zero having zeros of positive lower density which is impossible unless it is constantly zero (cf. \cite[Theorem 2.5.13]{boas}). 
\end{proof}

\begin{remark}
Let $\varphi$ be an entire function of exponential type. Then for every entire function $f$ of exponential type satisfying $\varphi(K(f))\subset \D$, we have $\varphi(D)^n f(0)\rightarrow 0$, $n\rightarrow \infty$, as it turns out from the proof of Theorem \ref{hyperexp}. Therefore, 
$\HC(\varphi(D),H(\C))\cap\Exp(K)$ (and thus $\FHC(\varphi(D),H(\C))\cap \Exp(K)$)
are empty 
whenever $\varphi(K)\subset \D$. Moreover, Theorem \ref{hyperexp} states that 
$\HC(\varphi(D),\Exp(K))$ (and thus $\FHC(\varphi(D),\Exp(K)$) are empty  
if $\varphi(K)\subset \C\setminus\overline{\D}$. However, we do not know whether $\HC(\varphi(D),H(\C))\cap\Exp(K)$ or $\FHC(\varphi(D),H(\C))\cap\Exp(K)$ are empty in the latter case.
\end{remark}

\subsection*{Acknowledgements}
We are grateful to the DFG (Deutsche Forschungsgemeinschaft; GZ: MU 1529/5-1) for funding the project from which this work emerged.


%

\end{document}